\newcommand{\emone}{\bf}
\newcommand{\emtwo}{\em}
\newcommand{\prm}[1]{\pi_{#1}}
\newcommand\et[1]{{\mathbb{E}_{t} \left[#1\right]}}
\def\<#1,#2>{\langle #1,#2\rangle}
\newcommand{\sigmaesc}{\sigma_{\ast}}
\newcommand{\sigmass}{\sigma_{\mathrm{Shuffle}}^2}
\newcolumntype{C}[1]{>{\centering\let\newline\\\arraybackslash\hspace{0pt}}m{#1}}
\definecolor{mygreen}{rgb}{0.0, 0.65, 0.31}
\definecolor{mydarkgreen}{RGB}{39,130,67}
\title{Random Reshuffling: Simple Analysis \\ with Vast Improvements}
\author{%
  Konstantin Mishchenko \\
  KAUST \\
  Thuwal, Saudi Arabia
  \And
  Ahmed Khaled \\
  Cairo University \\
  Giza, Egypt
  \And
  Peter Richt\'{a}rik \\
  KAUST \\
  Thuwal, Saudi Arabia
}
\begin{document}

\maketitle

\begin{abstract}
  Random Reshuffling (RR) is an algorithm for minimizing finite-sum functions that utilizes iterative gradient descent steps in conjunction with data reshuffling. Often contrasted with its sibling Stochastic Gradient Descent (SGD), RR is usually faster in practice and enjoys significant popularity in convex and non-convex optimization. The convergence rate of RR has attracted substantial attention recently and, for strongly convex and smooth functions, it was shown to converge faster than SGD if 1) the stepsize is small, 2) the gradients are bounded, and 3) the number of epochs is large. We remove these 3 assumptions, improve the dependence on the condition number from $\kappa^2$ to $\kappa$ (resp.\ from $\kappa$ to $\sqrt{\kappa}$) and, in addition, show that RR has a different type of variance. We argue through theory and experiments that the new variance type gives an additional justification of the superior performance of RR. To go beyond strong convexity, we present several results for non-strongly convex and non-convex objectives. We show that in all cases, our theory improves upon existing literature. Finally, we prove fast convergence of the Shuffle-Once (SO) algorithm, which shuffles the data only once, at the beginning of the optimization process. Our theory for strongly convex objectives tightly matches the known lower bounds for both RR and SO and substantiates the common practical heuristic of shuffling once or only a few times. As a byproduct of our analysis, we also get new results for the Incremental Gradient algorithm (IG), which does not shuffle the data at all.
\end{abstract}

\section{Introduction}

We study the finite-sum minimization problem
\begin{equation}
  \label{eq:finite-sum-min}
\textstyle  \min_{x \in \R^d} \Bigl [ f(x) = \frac{1}{n} \sum \limits_{i=1}^{n} f_{i} (x) \Bigr ],
\end{equation}
where each $f_{i}: \R^d \to \R$ is differentiable and smooth, and  are particularly interested in the big data machine learning setting where  the number of functions $n$ is large. Thanks to their scalability and low memory requirements, first-order methods are especially popular in this setting \citep{Bottou2018}. \emph{Stochastic} first-order algorithms in particular have attracted a lot of attention in the machine learning community and are often used in combination with various practical heuristics. Explaining these heuristics may lead to further development of stable and efficient training algorithms. In this work, we aim at better and sharper theoretical explanation of one intriguingly simple but notoriously elusive heuristic: {\em data permutation/shuffling}.

\subsection{Data permutation}

In particular, the goal of our paper is to obtain deeper theoretical understanding of  methods for solving  \eqref{eq:finite-sum-min} which rely on random or deterministic {\em permutation/shuffling} of the data  $\{1,2,\dots,n\}$ and perform incremental  gradient updates following the permuted order.  We study three methods which belong to this class, described next.

An immensely popular but theoretically elusive  method  belonging to the class of data permutation methods  is the {\bf Random Reshuffling (RR)} algorithm (see Algorithm~\ref{alg:rr}). This is the method we pay most attention to in this work, as reflected in the title. In each epoch $t$ of RR, we sample indices $\pi_{0}, \pi_{1}, \ldots, \pi_{n-1}$ \emph{without replacement} from $\{ 1, 2, \ldots, n \}$, i.e., $\{\pi_{0}, \pi_{1}, \ldots, \pi_{n-1}\}$ is a random permutation of the set $\{ 1, 2, \ldots, n \}$, and proceed with $n$ iterates of the form
\[ x_{t}^{i+1} = x_t^i - \gamma \nabla f_{\pi_{i}} (x_t^i), \]
where $\gamma > 0$ is a stepsize. We then set $x_{t+1} = x_t^n$,  and repeat the process for a total of $T$ epochs. Notice that in  RR, a {\em new} permutation/shuffling is generated at the beginning of each epoch, which is why the term  {\em re}shuffling is used.

Furthermore, we consider the {\bf Shuffle-Once (SO)} algorithm, which is identical to RR with the exception that it shuffles the dataset only once---at the very beginning---and then reuses this random permutation in all subsequent epochs (see Algorithm~\ref{alg:so}). Our results for SO follow as corollaries of the tools we developed in order to conduct a sharp analysis of RR.

Finally, we also consider the {\bf Incremental Gradient (IG)} algorithm, which is identical to SO, with the exception that the initial permutation is not random but deterministic. Hence, IG performs incremental gradient steps through the data in a {\em cycling} fashion. The ordering could be {\em arbitrary}, e.g., it could be  selected {\em implicitly} by the ordering the data comes in, or chosen {\em adversarially}. Again, our results for IG follow as a byproduct of our efforts to understand RR.

\begin{minipage}[t]{0.49\textwidth}
  \begin{algorithm}[H]
    \caption{Random Reshuffling (RR)}
    \label{alg:rr}
 \begin{algorithmic}[1]
   \Require Stepsize $\gamma > 0$, initial vector $x_0 = x_0^0 \in \R^d$, number of epochs $T$
    \For{epochs $t=0,1,\dotsc,T-1$}
   \State {\color{mydarkred}Sample a permutation $\prm{0}, \prm{1}, \ldots, \prm{n-1}$ of $\{ 1, 2, \ldots, n \}$    }
       \For{$i=0, 1, \ldots, n-1$}
          \State $x_{t}^{i+1} = x_t^{i} - \gamma \nabla f_{\prm{i}} (x_t^i)$
       \EndFor
       \State $x_{t+1} = x_{t}^{n}$
    \EndFor
 \end{algorithmic}
 \end{algorithm}
\end{minipage}
  \hfill
\begin{minipage}[t]{0.49\textwidth}
  \begin{algorithm}[H]
    \caption{Shuffle Once (SO)}
    \label{alg:so}
 \begin{algorithmic}[1]
   \Require Stepsize $\gamma > 0$, initial vector $x_0 = x_0^0 \in \R^d$, number of epochs $T$
   \State {\color{mydarkred} Sample a permutation $\prm{0}, \prm{1}, \ldots, \prm{n-1}$ of $\{ 1, 2, \ldots, n \}$}
    \For{epochs $t=0,1,\dotsc,T-1$}
       \For{$i=0, 1, \ldots, n-1$}
          \State $x_{t}^{i+1} = x_t^{i} - \gamma \nabla f_{\prm{i}} (x_t^i)$
       \EndFor
       \State $x_{t+1} = x_{t}^{n}$
    \EndFor
 \end{algorithmic}
 \end{algorithm}
\end{minipage}

\subsection{Brief literature review}

RR is usually contrasted with its better-studied sibling Stochastic Gradient Descent (SGD), in which each $\pi_{i}$ is sampled uniformly \emph{with replacement} from $\{ 1, 2, \ldots, n \}$. RR often converges faster than SGD on many practical problems~\citep{Bottou2009,Recht2013}, is more friendly to cache locality~\citep{Bengio2012}, and is in fact standard in deep learning~\citep{Sun2020}.

The convergence properties of SGD are well-understood, with tightly matching lower and upper bounds in many settings~\citep{Rakhlin2012,Drori2019,HaNguyen2019}. Sampling without replacement allows RR to leverage the finite-sum structure of \eqref{eq:finite-sum-min} by ensuring that \emph{each} function contributes to the solution once per epoch. On the other hand, it also introduces a significant complication: the steps are now {\em biased}. Indeed, in any iteration $i>0$ within an epoch, we face the challenge of not having (conditionally) unbiased gradients since
\[ \ec{\nabla f_{\pi_{i}} (x_t^i) \mid x_t^i} \neq \nabla f(x_t^i). \]
This bias implies that individual iterations do not necessarily approximate a full gradient descent step. Hence, in order to obtain meaningful convergence rates for RR, it is necessary to resort to more involved proof techniques. In recent work, various convergence rates have been established for RR. However, a satisfactory, let alone complete, understanding of the algorithm's convergence remains elusive. For instance, the early line of attack pioneered by  \citet{RR-conjecture2012} seems to have hit the wall as their noncommutative arithmetic-geometric mean conjecture is not true~\citep{Recht-Re_conj_is_false_ICML_2020}. The situation is even more pronounced with the SO method, as \citet{Safran2020good} point out that there are no convergence results specific for the method, and the only convergence rates for SO follow by applying the worst-case bounds of IG. \citet{Rajput2020} state that a common practical heuristic is to use methods like SO that do not reshuffle the data every epoch. Indeed, they add that 
\emph{``current theoretical bounds are insufficient to explain this phenomenon, and a new theoretical breakthrough may be required to tackle it''.}

IG has a long history owing to its success in training neural networks~\citep{Luo1991, Grippo1994}, and its asymptotic convergence has been established early~\citep{Mangasarian1994,Bertsekas2000}. 
Several rates for non-smooth \& smooth cases were established by \citet{Nedic2001, Li2019, Gurbuzbalaban2019IG, Ying2019} and \citet{Nguyen2020}.
Using IG poses the challenge of choosing a specific permutation for cycling through the iterates, which \citet{Nedic2001} note to be difficult. \citet{Bertsekas2011} gives an example that highlights the susceptibility of IG to bad orderings compared to RR. Yet, thanks to \citet{Gurbuzbalaban2019RR} and \citet{haochen2018random}, RR is known to improve upon both SGD and IG for \emph{twice}-smooth objectives. \citet{Nagaraj2019} also study convergence of RR for smooth objectives, and \citet{Safran2020good,Rajput2020} give lower bounds for RR and related methods.

\section{Contributions} \label{sec:contributions}

In this work, we study the convergence behavior of the data-permutation methods RR, SO and IG. While existing proof techniques succeed in obtaining insightful bounds for RR and IG, they fail to fully capitalize on the intrinsic power reshuffling and shuffling offers, and are not applicable to SO at all\footnote{As we have mentioned before, the best known bounds for SO are those which apply to IG also, which means that the randomness inherent in SO is wholly ignored.}. Our proof techniques are dramatically novel, simple, more insightful, and lead to improved convergence results, all under weaker  assumptions on the objectives than prior work.

\subsection{New and improved convergence rates for RR, SO and IG} In Section~\ref{sec:conv-theory}, we analyze the RR and SO methods and present novel convergence rates for strongly convex, convex, and non-convex  smooth objectives. Our results for RR are summarized in Table~\ref{tab:conv-rates}. 
  
  \begin{itemize}[leftmargin=0.15in,itemsep=0.01in,topsep=0pt]
    \item {\emone Strongly convex case.} If each $f_i$ is strongly convex, we introduce a {\emtwo new proof technique} for studying the convergence of RR/SO   that allows us to obtain a {\emtwo better dependence on problem constants}, such as the number of functions $n$ and the condition number $\kappa$, compared to prior work (see Table~\ref{tab:conv-rates}). Key to our results is a {\emtwo new notion of variance specific to RR/SO} (see Definition~\ref{def:bregman-div-noise}), which we argue explains the superior convergence of RR/SO compared to SGD in many practical scenarios. Our result for SO tightly {\emtwo matches the lower bound} of \citet{Safran2020good}. We prove similar results in the more general setting when each $f_{i}$ is convex and $f$ is strongly convex (see Theorem~\ref{thm:only-f-sc-rr-conv}), but in this case we are forced to use smaller stepsizes. 

    \item {\emone Convex case.}  For convex but not necessarily strongly convex objectives $f_{i}$, we give the first result showing that {\emtwo RR/SO can provably achieve better convergence than SGD} for a large enough number of iterations. This holds even when comparing against results that assume second-order smoothness, like the result of \citet{haochen2018random}.
    
    \item {\emone Non-convex case.}  For non-convex objectives $f_{i}$, we obtain for RR a  {\emtwo much better dependence on the number of functions $n$} compared to the prior work of \citet{Nguyen2020}. 
  \end{itemize}

Furthermore, in the appendix we formulate and prove convergence results for IG for strongly convex objectives, convex, and non-convex objectives as well. The bounds are worse than RR by a factor of $n$ in the noise/variance term, as IG does not benefit from randomization. Our result for strongly convex objectives {\em tightly matches the lower bound of \citet{Safran2020good} up to an extra iteration and logarithmic factors, and is the first result to tightly match this lower bound.}

\subsection{More general assumptions on the function class} Previous non-asymptotic convergence analyses of RR either obtain worse bounds that apply to IG, e.g., \citep{Ying2019,Nguyen2020}, or depend crucially on the assumption that each $f_i$ is Lipschitz \citep{Nagaraj2019,haochen2018random,Ahn2020}. Unfortunately, requiring each $f_i$ to be Lipschitz contradicts strong convexity~\citep{nguyen2018sgd} and is furthermore not satisfied in least square regression, matrix factorization, or for neural networks with smooth activations. In contrast, our work is the first to show how to leverage randomization to obtain better rates for RR without assuming each $f_i$ to be Lipschitz. In concurrent work, \citet{AhnYun2020} also obtain a result for non-convex objectives satisfying the Polyak-{\L}ojasiewicz inequality, a generalization of strong convexity. Their result holds without assuming bounded gradients or bounded variance, but unfortunately with a worse dependence on $\kappa$ and $n$ when specialized to $\mu$-strongly convex functions.
  \begin{itemize}[leftmargin=0.15in,itemsep=0.01in,topsep=0pt]
    \item {\emone Strongly convex and convex case.} For strongly convex and convex objectives {\emtwo we do not require any assumptions on the functions used beyond smoothness and convexity.} 
    \item {\emone Non-convex case.} For non-convex objectives we obtain our results under a significantly more general assumption than the bounded gradients assumptions employed in prior work. Our assumption is also provably satisfied when each function $f_i$ is lower bounded, and hence is {\emtwo not only more general but also a more realistic assumption to use.}
\end{itemize}

\begin{table}[]
  \begin{threeparttable}[b]
    \centering
    \caption{Number of individual gradient evaluations needed by RR to reach an $\e$-accurate solution (defined in Section~\ref{sec:conv-theory}). Logarithmic factors and constants that are not related to the assumptions are ignored. For non-convex objectives, $A$ and $B$ are the constants given by Assumption~\ref{asm:2nd-moment}.}
    \label{tab:conv-rates}
    \begin{tabular}{@{}C{1cm}@{}C{1cm}@{}cccc@{}}
    \toprule
      \multicolumn{2}{c}{Assumptions} & $\mu$-Strongly & Non-Strongly & \multirow[c]{2}{*}{Non-Convex} & \multirow[c]{2}{*}{Citation}\\ \cmidrule(lr){1-2}
      N.L.\tnote{\color{red}(1)} & U.V.\tnote{\color{red}(2)} & Convex & Convex & & \\ \midrule
        \cmark & \cmark & {\footnotesize $\kappa^2 n  + \frac{\kappa n  \sigmaesc}{\mu\sqrt{\e}}$} & --  & -- & {\footnotesize \citet{Ying2019}} \\[5pt]
        \xmark & \xmark & {\footnotesize $\kappa^2 n + \frac{\kappa \sqrt{n}  G}{\mu \sqrt{\e}}$} & {\footnotesize $\frac{L D^2 }{\e} + \frac{G^2 D^2}{\e^2}$}\tnote{\color{red}(3)} & -- & {\footnotesize \citet{Nagaraj2019}} \\[5pt]
        \xmark & \xmark & -- & -- & {\footnotesize $\frac{ L n}{\e^2} + \frac{ L n G}{\e^3}$} & {\footnotesize \citet{Nguyen2020}} \\[5pt]
        \cmark & \cmark & {\footnotesize $\frac{ \kappa^2 n }{\sqrt{\mu \e}}  + \frac{ \kappa^2 n \sigmaesc}{\mu \sqrt{\e}}$} \tnote{\color{red}(4)} & -- & -- & {\footnotesize \citet{Nguyen2020}} \\[5pt]
        \xmark & \xmark & {\footnotesize$\frac{\kappa \alpha }{\e^{1/\alpha}} + \frac{\kappa  \sqrt{n} G \alpha^{3/2}}{\mu\sqrt{\e}}$}\tnote{\color{red}(5)} & -- & -- & {\footnotesize \citet{Ahn2020}} \\[5pt]
        \cmark & \cmark & { \footnotesize$ \kappa n + \frac{\sqrt{n}}{\sqrt{\mu\e}} + \frac{\kappa \sqrt{n} G_0}{\mu \sqrt{\e}}$ }\tnote{\color{red}(6)} & -- & -- & {\footnotesize \citet{AhnYun2020}} \\[5pt]
        \cmidrule(lr){1-5}
        \multirow[c]{2}{*}[-4pt]{\cmark} & \multirow[c]{2}{*}[-4pt]{\cmark} & {\footnotesize $\kappa + \frac{\sqrt{\kappa n} \sigmaesc}{\mu \sqrt{\e}}$}\tnote{\color{red}(7)} &  \multirow[c]{2}{*}{$\frac{ L n}{\e} + \frac{ \sqrt{L n} \sigmaesc}{\e^{3/2}}$} & { \footnotesize \multirow[c]{2}{*}{$\frac{ L n}{\e^2} + \frac{ L \sqrt{n} (B+\sqrt{A})}{\e^3}$}} & \multirow[c]{2}{*}[-4pt]{This work} \\[5pt]
        & & { \footnotesize $\kappa n + \frac{\sqrt{\kappa n} \sigmaesc}{\mu \sqrt{\e}}$ } &  &  & \\
        \bottomrule
    \end{tabular}
    \begin{tablenotes}
      {\footnotesize
        \item [{\color{red}(1)}] Support for non-Lipschitz functions (N.L.): proofs without assuming that $\max_{i=1,\dotsc,n}\norm{\nabla f_{i} (x)} \leq G$ for all $x \in \R^d$ and some $G>0$. Note that $ \frac{1}{n} \sum_{i=1}^{n} \sqn{\nabla f_{i} (x_\ast)} \eqdef \sigmaesc^2 \leq G^2$ and $B^2 \leq G^2$.
        \item [{\color{red}(2)}] Unbounded variance (U.V.): there may be no constant $\sigma$ such that Assumption~\ref{asm:2nd-moment} holds with $A = 0$ and $B = \sigma$. Note that when the individual gradients are bounded, the variance is automatically bounded too.
        \item [{\color{red}(3)}] \citet{Nagaraj2019} require, for non-strongly convex functions, projecting at each iteration onto a bounded convex set of diameter $D$. We study the unconstrained problem.
        \item [{\color{red}(4)}] For strongly convex, \citet{Nguyen2020} bound $f(x)-f(x_\ast)$ rather than squared distances, hence we use strong convexity to translate their bound into a bound on $\|x-x_\ast\|^2$.
        \item [{\color{red}(5)}] The constant $\alpha > 2$ is a parameter to be specified in the stepsize used by \citep{Ahn2020}. Their full bound has several extra terms but we include only the most relevant ones. 
        \item [{\color{red}(6)}] The result of \citet{AhnYun2020} holds when $f$ satisfies the Polyak-{\L}ojasiewicz inequality, a generalization of strong convexity. We nevertheless specialize it to strong convexity for our comparison. The constant $G_0$ is defined as $G_0 \eqdef \sup_{x: f(x) \leq f(x_0)} \max_{i \in [n]} \norm{\nabla f_{i} (x)}$. Note that $\sigmaesc \leq G_0$. We show a better complexity for PL functions under bounded variance in Theorem~\ref{thm:rr-nonconvex}.
        \item [{\color{red}(7)}] This result is the first to show that RR and SO work with any $\gamma\le \frac{1}{L}$, but it asks for each $f_i$ to be strongly convex. The second result assumes that only $f$ is strongly convex.
      }
    \end{tablenotes}
  \end{threeparttable}
\end{table}

\section{Convergence theory}
\label{sec:conv-theory}

We will derive results for strongly convex, convex as well as non-convex  objectives. 
To compare between the performance of first-order methods, we define an $\e$-accurate solution as a point $\tilde{x} \in \R^d$ that satisfies (in expectation if $\tilde{x}$ is random)
\begin{align*}
  \norm{\nabla f(\tilde{x})} \leq \e, && \text { or } && \norm{\tilde{x} - x_\ast}^2 \leq \e, && \text { or } && f(\tilde{x}) - f(x_\ast) \leq \e
\end{align*}
for non-convex, strongly convex, and non-strongly convex objectives, respectively, and where $x_\ast$ is assumed to be a minimizer of $f$ if $f$ is convex. We then measure the performance of first-order methods by the number of individual gradients $\nabla f_{i} (\cdot)$ they access to reach an $\e$-accurate solution. 

Our first assumption is that the objective is bounded from below and smooth. This assumption is used in all of our results and is widely used in the literature.
\begin{assumption}
  \label{asm:f-smoothness}
The objective $f$ and the individual losses $f_{1}, \ldots, f_{n}$ are all $L$-smooth, i.e., their gradients are $L$-Lipschitz. Further, $f$ is lower bounded by some $f_\ast \in \R$. If $f$ is convex, we also assume the existence of a minimizer $x_\ast \in \R^d$.
\end{assumption}

Assumption~\ref{asm:f-smoothness} is necessary in order to obtain better convergence rates for RR compared to SGD, since without smoothness the SGD rate is optimal and cannot be improved \citep{Nagaraj2019}. The following quantity is key to our analysis and serves as an asymmetric distance between two points measured in terms of functions.
\begin{definition}
	For any $i$, the quantity $D_{f_i}(x, y)\eqdef f_i(x) - f_i(y) - \<\nabla f_i(y), x - y>$ is the \emph{Bregman divergence} between $x$ and $y$ associated with $f_i$.
\end{definition}
It is well-known that if $f_i$ is $L$-smooth and $\mu$-strongly convex, then for all $x, y \in \R^d$
\begin{equation}
  \label{eq:bregman-sc-smooth-properties}
  \textstyle \frac{\mu}{2}\|x-y\|^2\le D_{f_i}(x, y)\le \frac{L}{2}\|x-y\|^2,
\end{equation}
so each Bregman divergence is closely related to the Euclidian distance. Moreover, the difference between the gradients of a convex and $L$-smooth $f_i$ is related to its Bregman divergence by
\begin{equation}
  \label{eq:bregman-sc-grad-smooth-properties}
  \textstyle \sqn{\nabla f_i (x) - \nabla f_i (y)} \leq 2 L \cdot D_{f_i} (x, y).
\end{equation}

\subsection{Main result: strongly convex objectives}
\label{sec:strongly-convex}

Before we proceed to the formal statement of our main result, we need to present the central finding of our work. The analysis of many stochastic methods, including SGD, rely on the fact that the iterates converge to $x_\ast$ up to some noise. This is exactly where we part ways with the standard analysis techniques, since, it turns out, the intermediate iterates of shuffling algorithms converge to some other points. Given a permutation $\pi$, the real limit points are defined below,
\begin{align}
\textstyle	x_\ast^i \eqdef x_\ast - \gamma \sum \limits_{j=0}^{i-1} \nabla f_{\pi_{j}} (x_\ast), \qquad i=1,\dotsc, n-1. \label{eq:x_ast_i}
\end{align}
In fact, it is predicted by our theory and later validated by our experiments that within an epoch the iterates \emph{go away} from $x_\ast$, and closer to the end of the epoch they make a sudden comeback to $x_\ast$.

The second reason the vectors introduced in Equation~\eqref{eq:x_ast_i} are so pivotal is that they allow us to define a new notion of variance. Without it, there seems to be no explanation for why RR sometimes overtakes SGD from the very beginning of optimization process. We define it below.
\begin{definition}[Shuffling variance]
  \label{def:bregman-div-noise}
  Given a stepsize $\gamma>0$ and a random permutation $\pi$ of $\{ 1, 2, \ldots, n \}$, define $x_\ast^i$ as in \eqref{eq:x_ast_i}. Then, the shuffling variance is given by
  \begin{equation}\label{eq:bregman-div-noise}\textstyle  \sigmass \eqdef \max \limits_{i=1, \ldots, n-1} \left [ \frac{1}{\gamma}\ec{D_{f_{\pi_{i}}} (x_\ast^i, x_\ast)} \right ], \end{equation}
  where the expectation is taken with respect to the randomness in the permutation $\pi$.
\end{definition}

Naturally, $\sigmass$ depends on the functions $f_1, \ldots, f_n$, but, unlike SGD, it also depends in a non-trivial manner on the stepsize $\gamma$. The easiest way to understand the new notation is to compare it to the standard definition of variance used in the analysis of SGD. We argue that $\sigmass$ is the natural counter-part for the standard variance used in SGD. We relate both of them by the following upper and lower bounds:

\begin{proposition}
  \label{prop:shuffling-variance-normal-variance-bound}
  Suppose that each of $f_1, f_2, \ldots, f_{n}$ is $\mu$-strongly convex and $L$-smooth. Then
  $\frac{\gamma\mu n}{8}\sigmaesc^2
  \le \sigmass \leq \frac{\gamma L n}{4} \sigmaesc^2, $
  where $\sigmaesc^2 \eqdef \frac{1}{n} \sum_{i=1}^{n} \sqn{\nabla f_{i} (x_\ast)}$.
\end{proposition}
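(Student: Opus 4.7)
The plan is to sandwich the Bregman divergence between the quadratic bounds in \eqref{eq:bregman-sc-smooth-properties} and then reduce everything to a single clean calculation: the expected squared norm of a prefix sum of gradients drawn without replacement. First I would note that strong convexity of each $f_i$ implies $f$ has a unique minimizer $x_\ast$, so $\sum_{k=1}^n \nabla f_k(x_\ast) = n\nabla f(x_\ast) = 0$; the random vectors $\nabla f_{\pi_j}(x_\ast)$ therefore have mean zero under a uniform permutation. Since $x_\ast^i - x_\ast = -\gamma\sum_{j=0}^{i-1}\nabla f_{\pi_j}(x_\ast)$ by \eqref{eq:x_ast_i}, and the sandwich bounds $\tfrac{\mu}{2}\sqn{x_\ast^i-x_\ast}\leq D_{f_{\pi_i}}(x_\ast^i,x_\ast)\leq \tfrac{L}{2}\sqn{x_\ast^i-x_\ast}$ depend only on the prefix $(\pi_0,\dots,\pi_{i-1})$ and not on $\pi_i$ itself, the computation cleanly factors through the marginal distribution of that prefix.

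The key technical ingredient is the standard sampling-without-replacement second-moment identity: for any fixed vectors $v_1,\dots,v_n$ with $\sum_k v_k = 0$ and a uniformly random permutation $\pi$,
\begin{equation*}
\textstyle \ec{\sqn{\sum_{j=0}^{i-1} v_{\pi_j}}} \;=\; \frac{i(n-i)}{n-1}\cdot\frac{1}{n}\sum_{k=1}^n \sqn{v_k}.
\end{equation*}
Specializing to $v_k = \nabla f_k(x_\ast)$ gives $\ec{\sqn{x_\ast^i - x_\ast}} = \gamma^2\cdot\frac{i(n-i)}{n-1}\cdot\sigmaesc^2$. This identity is well-known; if needed I would supply a two-line derivation by expanding the square and using $\ec{\<v_{\pi_a},v_{\pi_b}>}=-\frac{1}{n(n-1)}\sum_k\sqn{v_k}$ for $a\neq b$, which follows from the zero-mean hypothesis.

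To finish the upper bound, smoothness and the identity yield $\tfrac{1}{\gamma}\ec{D_{f_{\pi_i}}(x_\ast^i, x_\ast)}\leq \tfrac{L\gamma}{2}\cdot\frac{i(n-i)}{n-1}\,\sigmaesc^2$; taking the max over $i\in\{1,\dots,n-1\}$ and using $i(n-i)\leq n^2/4$ together with $\frac{n}{n-1}\leq 2$ (valid for $n\geq 2$) produces the stated $\sigmass\leq \frac{L\gamma n}{4}\sigmaesc^2$. For the lower bound, apply strong convexity to get $\tfrac{1}{\gamma}\ec{D_{f_{\pi_i}}(x_\ast^i, x_\ast)}\geq \tfrac{\mu\gamma}{2}\cdot\frac{i(n-i)}{n-1}\,\sigmaesc^2$, evaluate at $i=\lfloor n/2\rfloor$, and verify in both parities of $n$ that $\frac{i(n-i)}{n-1}\geq \frac{n}{4}$ (for even $n$: $\frac{n^2/4}{n-1}\geq n/4$; for odd $n$: $\frac{(n^2-1)/4}{n-1}=\frac{n+1}{4}\geq n/4$), giving $\sigmass\geq \frac{\mu\gamma n}{8}\sigmaesc^2$.

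I do not expect a serious obstacle. The only subtle point is that $\pi_i$ appears both as the subscript of the Bregman divergence and implicitly inside $x_\ast^i$ via the prefix, but because the sandwich bounds in \eqref{eq:bregman-sc-smooth-properties} eliminate the dependence on $\pi_i$, the calculation factors cleanly through the marginal of the prefix and no conditional-expectation gymnastics are needed. Notably, the proof uses neither bounded gradients nor a Lipschitz assumption beyond what is already packaged in \eqref{eq:bregman-sc-smooth-properties}.
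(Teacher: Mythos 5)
Your proof is correct and follows essentially the same route as the paper's: sandwich the Bregman divergence via \eqref{eq:bregman-sc-smooth-properties}, compute $\ec{\sqn{x_\ast^i-x_\ast}}=\gamma^2\tfrac{i(n-i)}{n-1}\sigmaesc^2$ from the without-replacement variance identity (the paper's Lemma~\ref{lem:sampling_wo_replacement}), and finish with the elementary bounds on $i(n-i)$. Your parity casework at $i=\lfloor n/2\rfloor$ and the estimate $i(n-i)\leq n^2/4\leq n(n-1)/2$ are just minor cosmetic variants of the paper's inequalities.
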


In practice, $\sigmass$ may be much closer to the lower bound than the upper bound; see Section~\ref{sec:experiments}. This leads to a dramatic difference in performance and provides additional evidence of the superiority of RR over SGD. The next theorem states how exactly convergence of RR depends on the introduced variance.

\begin{theorem}
  \label{thm:all-sc-rr-conv}
  Suppose that the functions $f_1,\dotsc, f_n$ are $\mu$-strongly convex and that Assumption~\ref{asm:f-smoothness} holds. Then for Algorithms~\ref{alg:rr} or~\ref{alg:so} run with a constant stepsize $\gamma \leq \frac{1}{L}$, the iterates generated by either of the algorithms satisfy
  \[  \ecn{x_{T} - x_\ast} \leq \br{1 - \gamma \mu}^{nT} \sqn{x_0 - x_\ast} + \frac{2\gamma \sigmass}{\mu}. \]
  \vspace{-2em}
\end{theorem}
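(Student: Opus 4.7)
The plan is to compare the iterates to the virtual sequence $\{x_\ast^i\}$ from \eqref{eq:x_ast_i} rather than to $x_\ast$ directly. Two structural facts drive the whole argument. First, $x_\ast^0 = x_\ast$ trivially. Second,
$$x_\ast^n \;=\; x_\ast - \gamma\sum_{j=0}^{n-1}\nabla f_{\pi_j}(x_\ast) \;=\; x_\ast - \gamma n\nabla f(x_\ast) \;=\; x_\ast,$$
since the permutation visits each index exactly once. Hence $x_{t+1}-x_\ast = x_t^n - x_\ast^n$, and the entire task reduces to tracking $u_i \eqdef x_t^i - x_\ast^i$ along one epoch. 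The virtual sequence is defined precisely so that the inner recursion collapses to
$$u_{i+1} \;=\; u_i \;-\; \gamma\bigl(\nabla f_{\pi_i}(x_t^i) - \nabla f_{\pi_i}(x_\ast)\bigr),$$
i.e.\ the update involves only gradient differences anchored at $x_\ast$, with no stray term from the drift.

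The technical core is a per-inner-step inequality of the form
$$\|u_{i+1}\|^2 \;\leq\; (1-\gamma\mu)\|u_i\|^2 \;+\; 2\gamma\, D_{f_{\pi_i}}(x_\ast^i, x_\ast), \qquad \gamma \leq 1/L.$$
I would attack this by splitting $\nabla f_{\pi_i}(x_t^i) - \nabla f_{\pi_i}(x_\ast) = G_1 + G_2$ with $G_1 \eqdef \nabla f_{\pi_i}(x_t^i) - \nabla f_{\pi_i}(x_\ast^i)$ and $G_2 \eqdef \nabla f_{\pi_i}(x_\ast^i) - \nabla f_{\pi_i}(x_\ast)$. The quantity $u_i - \gamma G_1$ is exactly the displacement between one gradient step of $f_{\pi_i}$ applied at $x_t^i$ and at $x_\ast^i$, so strong convexity, co-coercivity and $\gamma\leq 1/L$ give the textbook contraction $\|u_i - \gamma G_1\|^2 \leq (1-\gamma\mu)\|u_i\|^2$. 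The remainder $\gamma G_2$ is controlled via \eqref{eq:bregman-sc-grad-smooth-properties}, which yields $\|G_2\|^2 \leq 2L\, D_{f_{\pi_i}}(x_\ast^i, x_\ast)$; the cross term $\langle u_i - \gamma G_1, G_2\rangle$ then has to be absorbed using $\gamma L \leq 1$ in a way that preserves the $(1-\gamma\mu)$ prefactor and leaves the $2\gamma$ coefficient on the Bregman term.

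Once the per-step bound is in hand the rest is bookkeeping. Taking expectation and invoking Definition~\ref{def:bregman-div-noise} converts the Bregman term into the budget $2\gamma^2 \sigmass$. Unrolling the geometric recursion over $i=0,\dots,n-1$ with endpoints $u_0 = x_t - x_\ast$, $u_n = x_{t+1}-x_\ast$ and summing the geometric series gives
$$\mathbb{E}\|x_{t+1} - x_\ast\|^2 \;\leq\; (1-\gamma\mu)^n\, \mathbb{E}\|x_t - x_\ast\|^2 \;+\; \frac{2\gamma\, \sigmass\bigl(1-(1-\gamma\mu)^n\bigr)}{\mu}.$$
Iterating over $T$ epochs telescopes the error coefficient (the $1-(1-\gamma\mu)^n$ factors collapse into a single $1-(1-\gamma\mu)^{nT}\leq 1$) and delivers the claimed bound. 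The argument for SO is identical: nothing above uses that the permutations across different epochs are independent, only that within a single epoch each index occurs once.

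I expect the main obstacle to be the per-step inequality. Obtaining exactly $(1-\gamma\mu)$ and exactly a $2\gamma$ coefficient on the Bregman term is delicate: a naive Young's-inequality split of the cross term $\langle u_i - \gamma G_1, G_2\rangle$ either destroys the contraction (yielding $(1-c\gamma\mu)$ for some $c<1$) or inflates the error coefficient by a factor of order $1/(\gamma\mu)$, which would translate into an extra $\kappa$ in the final noise term. Threading this needle using co-coercivity together with $\gamma L \leq 1$ is the place where the improvement over prior analyses (which compare iterates to $x_\ast$ instead of to $x_\ast^i$) is actually realized.
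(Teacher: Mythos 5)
Your overall architecture is exactly the paper's: compare $x_t^i$ to the virtual sequence $x_\ast^i$, note $x_\ast^0=x_\ast^n=x_\ast$, establish the per-step bound $\ec{\|u_{i+1}\|^2}\le(1-\gamma\mu)\ec{\|u_i\|^2}+2\gamma^2\sigmass$, and unroll; your unrolling over the epoch and over $T$ epochs is correct and matches the paper. The gap is precisely the step you flag as the ``main obstacle'': you assert the per-step inequality $\|u_{i+1}\|^2\le(1-\gamma\mu)\|u_i\|^2+2\gamma D_{f_{\pi_i}}(x_\ast^i,x_\ast)$ but do not prove it, and the route you propose (splitting the gradient difference into $G_1+G_2$ and handling the cross term $\langle u_i-\gamma G_1,G_2\rangle$ by Young/co-coercivity) genuinely cannot deliver the exact constants. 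To see why, note that at $\gamma=1/L$ the contraction $\|u_i-\gamma G_1\|^2\le(1-\gamma\mu)\|u_i\|^2$ has zero slack in $\|G_1\|^2$, and the pure quadratic term $\gamma^2\|G_2\|^2$ can already equal the entire budget $2\gamma D_{f_{\pi_i}}(x_\ast^i,x_\ast)$ (equality in \eqref{eq:bregman-sc-grad-smooth-properties} is attained, e.g., for quadratics with curvature $L$ along $x_\ast^i-x_\ast$), leaving nothing with which to absorb a cross term of indefinite sign. So the inequality is true, but not provable by any term-by-term absorption along your split.

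The resolution is to not split at all. Expand $\|u_{i+1}\|^2=\|u_i\|^2-2\gamma\langle\nabla f_{\pi_i}(x_t^i)-\nabla f_{\pi_i}(x_\ast),\,x_t^i-x_\ast^i\rangle+\gamma^2\|\nabla f_{\pi_i}(x_t^i)-\nabla f_{\pi_i}(x_\ast)\|^2$ and apply the three-point identity to the \emph{whole} inner product, which equals $D_{f_{\pi_i}}(x_\ast^i,x_t^i)+D_{f_{\pi_i}}(x_t^i,x_\ast)-D_{f_{\pi_i}}(x_\ast^i,x_\ast)$ with no cross terms whatsoever. The first divergence gives the $(1-\gamma\mu)$ contraction via \eqref{eq:bregman-sc-smooth-properties}; the second satisfies $D_{f_{\pi_i}}(x_t^i,x_\ast)\ge\frac{1}{2L}\|\nabla f_{\pi_i}(x_t^i)-\nabla f_{\pi_i}(x_\ast)\|^2$ and absorbs the quadratic term for $\gamma\le 1/L$ (leaving $-2\gamma(1-\gamma L)D_{f_{\pi_i}}(x_t^i,x_\ast)\le 0$); the third is the error term bounded in expectation by $\gamma\sigmass$. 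The cross term whose absorption you were worried about simply never appears. With that substitution your proof is complete; everything after the per-step bound, including the remark that SO needs no independence across epochs, is fine.
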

\begin{proof}[Proof]
  The key insight of our proof is that the intermediate iterates $x_t^1, x_t^2, \ldots$ do not converge to $x_*$, but rather converge to the sequence $x_*^1, x_\ast^2, \ldots$ defined by \eqref{eq:x_ast_i}. Keeping this intuition in mind, it makes sense to study the following recursion:
  \begin{align}
    &\ec{\|x_t^{i+1}-x_*^{i+1}\|^2} \notag\\
    &=\ec{\|x_t^{i}-x_*^{i}\|^2-2\gamma\<\nabla f_{\pi_i}(x_t^i)-\nabla f_{\pi_i}(x_*), x_t^i - x_*^i>+\gamma^2\|\nabla f_{\pi_i}(x_t^i) - \nabla f_{\pi_i}(x_*)\|^2}.\label{eq:big89fg9h9d}
  \end{align}
  Once we have this recursion, it is useful to notice that the scalar product can be decomposed as
  \begin{align}
    \<\nabla f_{\pi_i}(x_t^i)-\nabla f_{\pi_i}(x_*), x_t^i - x_*^i>
    &= [f_{\pi_i}(x_*^i)-f_{\pi_i}(x_t^i)-\<\nabla f_{\pi_i}(x_t^i), x_*^i-x_t^i>] \notag \\
    & \quad + [f_{\pi_i}(x_t^i)-f_{\pi_i}(x_*)-\<\nabla f_{\pi_i}(x_*), x_t^i-x_*>] \notag  \\
    & \quad - [f_{\pi_i}(x_*^i)-f_{\pi_i}(x_*)-\<\nabla f_{\pi_i}(x_*), x_*^i-x_*>] \notag \\
    &= D_{f_{\pi_i}}(x_*^i, x_t^i)+D_{f_{\pi_i}}(x_t^i, x_*) - D_{f_{\pi_i}}(x_*^i, x_*).\label{eq:bi89gfdb09hff}
  \end{align}
  This decomposition is, in fact, very standard and is a special case of the so-called {\em three-point identity} \citep{Chen1993}. So, it should not be surprising that we use it. \\
  The rest of the proof relies on obtaining appropriate  bounds for the terms in the recursion. Firstly, we bound each of the three Bregman divergence terms appearing in \eqref{eq:bi89gfdb09hff}. By $\mu$-strong convexity of $f_i$, the first term in \eqref{eq:bi89gfdb09hff} satisfies
  \[
      \frac{\mu}{2}\|x_t^i - x_*^i\|^2 \overset{\eqref{eq:bregman-sc-smooth-properties}}{\leq} D_{f_{\pi_i}}(x_*^i, x_t^i)		,
  \]
  which we will use to obtain contraction.	The second term in \eqref{eq:bi89gfdb09hff} can be bounded via
  \[
    \frac{1}{2L}\|\nabla f_{\pi_i}(x_t^i) - \nabla f_{\pi_i}(x_*)\|^2
    \overset{\eqref{eq:bregman-sc-grad-smooth-properties}}{\le} D_{f_{\pi_i}}(x_t^i, x_*),
  \]
  which gets absorbed in the last term in the expansion of $\|x_t^{i+1}-x_*^{i+1}\|^2$.   The expectation of the third divergence term in 
  \eqref{eq:bi89gfdb09hff}   is trivially bounded as follows:
  \[ \ec{D_{f_{\pi_{i}}} (x_\ast^i, x_\ast) } \leq \max_{i=1, \ldots, n-1} \left [ \ec{D_{f_{\pi_{i}}} (x_\ast^i, x_\ast)} \right ] = \gamma\sigmass. \]
  Plugging these three bounds back into \eqref{eq:bi89gfdb09hff}, and the resulting inequality into \eqref{eq:big89fg9h9d}, we obtain
  \begin{align}
    \ec{\|x_t^{i+1}-x_*^{i+1}\|^2}
    &\le \ec{(1-\gamma\mu)\|x_t^i-x_*^i\|^2 - 2\gamma(1-\gamma L)D_{f_{\pi_i}}(x_t^i, x_*)} + 2 \gamma^2 \sigmass \nonumber \\
    \label{eq:thm_str_cvx_main-proof-1}
    &\le (1-\gamma\mu)\ec{\|x_t^i-x_*^i\|^2}+ 2 \gamma^2 \sigmass.
  \end{align}
  The rest of the proof is just solving this recursion, and is relegated to Section~\ref{sec:proof-of-thm-1} in the appendix.
\end{proof}

We show (\Cref{corr:all-sc-rr-conv} in the appendix) that by carefully controlling the stepsize, the final iterate of RR after $T$ epochs satisfies
\begin{equation}
  \label{eq:all-sc-rr-conv-bound-T}
 \textstyle  \ecn{x_{T} - x_\ast} = \mathcal{\tilde{O}} \br{ \exp\left(-\frac{\mu n T}{L} \right) \sqn{x_0 - x_\ast} + \frac{\kappa \sigmaesc^2}{\mu^2 n T^2} },
\end{equation}
where the $\tilde{\cO}(\cdot)$ notation suppresses absolute constants and polylogarithmic factors. Note that Theorem~\ref{thm:all-sc-rr-conv} covers both RR and SO, and for SO, \citet{Safran2020good} give almost the same {\em lower} bound. Stated in terms of the squared distance from the optimum, their lower bound is 
\[\textstyle  \ecn{x_{T} - x_\ast} = \Omega\br{\min \pbr{ 1, \frac{\sigmaesc^2}{\mu^2 n T^2} }}, \]
where we note that in their problem $\kappa = 1$. This translates to sample complexity $\mathcal{O}\br{ \sqrt{n} \sigmaesc/(\mu \sqrt{\e}) }$ for $\e \leq 1$\footnote{In their problem, the initialization point $x_0$ satisfies $\sqn{x_0 - x_\ast} \leq 1$ and hence asking for accuracy $\e > 1$ does not make sense.}. Specializing $\kappa = 1$ in \Cref{eq:all-sc-rr-conv-bound-T} gives the sample complexity of $\mathcal{\tilde{O}}\br{1 + \sqrt{n} \sigmaesc/(\mu \sqrt{\e}) }$, matching the optimal rate up to an extra iteration. More recently, \citet{Rajput2020} also proved a similar lower bound for RR. We emphasize that Theorem~\ref{thm:all-sc-rr-conv} is not only tight, but it is also the first convergence bound that applies to SO. Moreover, it also immediately works if one permutes once every few epochs, which interpolates between RR and SO mentioned by \citet{Rajput2020}.

\textbf{Comparison with SGD} To understand when RR is better than SGD, let us borrow a convergence bound for the latter. Several works have shown (e.g., see \citep{needell2014stochastic, Stich2019b}) that for any $\gamma\le \frac{1}{2L}$ the iterates of SGD satisfy
\[
	\textstyle  \ecn{x_{nT}^{\mathrm{SGD}} - x_\ast} \leq \br{1 - \gamma \mu}^{nT} \sqn{x_0 - x_\ast} + \frac{ 2\gamma \sigmaesc^2}{\mu}.
\]
Thus, the question as to which method will be faster boils down to which variance is smaller: $\sigmass$ or $\sigmaesc^2$. According to Proposition~\ref{prop:shuffling-variance-normal-variance-bound}, it depends on both $n$ and the stepsize. Once the stepsize is sufficiently small, $\sigmass$ becomes smaller than $\sigmaesc^2$, but this might not be true in general. Similarly, if we partition $n$ functions into $n/\tau$ groups, i.e., use minibatches of size $\tau$, then $\sigmaesc^2$ decreases as $\cO\br{1/\tau}$ and $\sigmass$ as $\cO\br{1/\tau^2}$, so RR can become faster even without decreasing the stepsize. We illustrate this later with numerical experiments.

While \Cref{thm:all-sc-rr-conv} requires each $f_i$ to  be strongly convex, we can also obtain results in the case where the individual strong convexity  assumption is replaced by convexity. However, in such a case, we need to use a smaller stepsize, as the next theorem shows.
\begin{theorem}
  \label{thm:only-f-sc-rr-conv}
  Suppose that each $f_i$ is convex, $f$ is $\mu$-strongly convex, and Assumption~\ref{asm:f-smoothness} holds. Then provided the stepsize satisfies $\gamma \leq \frac{1}{\sqrt{2} L n}$ the final iterate generated by Algorithms~\ref{alg:rr} or~\ref{alg:so} satisfies
  \[\textstyle  \ecn{x_T - x_\ast} \leq \br{ 1 - \frac{\gamma \mu n}{2} }^T \sqn{x_0 - x_\ast} + \gamma^2 \kappa n \sigmaesc^2. \]
\end{theorem}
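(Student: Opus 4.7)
The plan is to reinterpret one epoch of RR/SO as a single (perturbed) gradient-descent step on $f$ with effective stepsize $\hat\gamma := \gamma n$, and then to control the perturbation by the within-epoch drift of the iterates. Summing the $n$ inner updates yields the identity
\[
  x_{t+1} = x_t - \hat\gamma\, \nabla f(x_t) + E_t, \qquad E_t := \gamma \sum_{i=0}^{n-1}\bigl[\nabla f_{\pi_i}(x_t) - \nabla f_{\pi_i}(x_t^i)\bigr],
\]
so the epoch is exact GD on $f$ plus a bias $E_t$ that vanishes whenever there is no drift ($x_t^i \equiv x_t$). The hypothesis $\gamma \le \tfrac{1}{\sqrt 2\,Ln}$ gives $\hat\gamma \le 1/L$, so $\mu$-strong convexity of $f$ and $L$-smoothness yield the standard contraction $\|x_t - \hat\gamma \nabla f(x_t) - x_\ast\|^2 \le (1-\hat\gamma\mu)\|x_t-x_\ast\|^2$. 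Expanding $\|x_{t+1}-x_\ast\|^2$ and absorbing the cross term via Young's inequality with parameter $\hat\gamma\mu$ produces the one-epoch recursion
\[
  \|x_{t+1}-x_\ast\|^2 \;\le\; \bigl(1-\tfrac{\hat\gamma\mu}{2}\bigr)\|x_t-x_\ast\|^2 + \tfrac{C}{\hat\gamma\mu}\|E_t\|^2.
\]

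The main technical obstacle is bounding $\mathbb{E}\|E_t\|^2$. By Jensen and $L$-smoothness of each $f_{\pi_i}$ one has $\|E_t\|^2 \le \gamma^2 n L^2 V_t$, where $V_t := \sum_{i=0}^{n-1}\mathbb{E}\|x_t^i - x_t\|^2$ is the total squared within-epoch drift, so it suffices to bound $V_t$. From $x_t^i - x_t = -\gamma\sum_{j<i}\nabla f_{\pi_j}(x_t^j)$ and Cauchy--Schwarz, I split $\nabla f_{\pi_j}(x_t^j) = \nabla f_{\pi_j}(x_\ast) + [\nabla f_{\pi_j}(x_t^j) - \nabla f_{\pi_j}(x_\ast)]$, apply the convex-smooth identity~\eqref{eq:bregman-sc-grad-smooth-properties} to the difference, use the uniform-marginal property $\mathbb{E}\,\nabla f_{\pi_j}(x_\ast) = \nabla f(x_\ast) = 0$ together with the definition of $\sigmaesc^2$ on the first summand, and finally triangle-expand $\|x_t^j-x_\ast\|^2 \le 2\|x_t^j-x_t\|^2 + 2\|x_t-x_\ast\|^2$. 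Summing over $i$ produces a self-consistent inequality of the shape
\[
  V_t \;\le\; c_1\,\gamma^2 n^2 L^2\,V_t + c_2\,\gamma^2 n^3 L^2\,\|x_t-x_\ast\|^2 + c_3\,\gamma^2 n^2\sigmaesc^2,
\]
and the stepsize condition $\gamma n L \le 1/\sqrt 2$ is precisely what keeps $c_1\gamma^2 n^2 L^2 < 1$, allowing me to solve for $V_t \lesssim \gamma^2 n^3 L^2\|x_t-x_\ast\|^2 + \gamma^2 n^2\sigmaesc^2$.

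Substituting this drift bound back into $\|E_t\|^2 \le \gamma^2 nL^2 V_t$ and into the one-epoch recursion, the contribution of $\|E_t\|^2$ to the coefficient of $\|x_t-x_\ast\|^2$ is of order $\gamma^3 n^3 L^4/\mu$, which the same stepsize restriction forces to be dominated by $\hat\gamma\mu/4$ and hence absorbed into the GD contraction; the surviving additive term is of order $\gamma^3 L n^2\sigmaesc^2$. Unrolling the resulting clean recursion $\mathbb{E}\|x_{t+1}-x_\ast\|^2 \le (1-\gamma\mu n/2)\,\mathbb{E}\|x_t-x_\ast\|^2 + \mathcal{O}(\gamma^3 Ln^2\sigmaesc^2)$ over $T$ epochs and summing the geometric series converts the noise into $\mathcal{O}(\gamma^2 (L/\mu) n\sigmaesc^2) = \mathcal{O}(\gamma^2\kappa n\sigmaesc^2)$, matching the claim. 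The entire argument is permutation-agnostic: it uses only the uniform marginal distribution of each $\pi_j$, so it applies identically to RR (fresh permutation every epoch) and SO (a single permutation reused throughout).
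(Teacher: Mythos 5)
Your overall architecture (one epoch as a perturbed gradient step on $f$, plus a self-consistent bound on the within-epoch drift) is coherent, but as written it does not prove the stated bound: the Young's inequality step costs you a factor of $\kappa$. Concretely, your one-epoch recursion carries the error term $\frac{C}{\hat\gamma\mu}\ec{\sqn{E_t}}$ with $\hat\gamma=\gamma n$, and the $1/(\hat\gamma\mu)$ penalty is unavoidable once you split the cross term $2\ev{x_t-\hat\gamma\nabla f(x_t)-x_\ast, E_t}$ by Young against the contracting distance. Your own bounds give $\ec{\sqn{E_t}}\le \gamma^2 nL^2 V_t$ and $V_t\le c_3\gamma^2n^2\sigmaesc^2+\dots$, so the additive noise per epoch is of order $\frac{1}{\gamma n\mu}\cdot\gamma^4n^3L^2\sigmaesc^2=\frac{\gamma^3L^2n^2}{\mu}\sigmaesc^2=\gamma^3\kappa Ln^2\sigmaesc^2$ --- not $\gamma^3Ln^2\sigmaesc^2$ as you assert (you dropped a factor $L/\mu$ at this step). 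Summing the geometric series (multiplying by $\frac{2}{\gamma\mu n}$) then yields a neighborhood of size $\gamma^2\kappa^2 n\sigmaesc^2$, a full factor of $\kappa$ worse than the theorem's $\gamma^2\kappa n\sigmaesc^2$.

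The paper sidesteps this loss by analyzing the epoch as an approximately \emph{implicit} step rather than an explicit one: it expands $\sqn{x_t-x_\ast}=\sqn{x_{t+1}+\gamma g_t-x_\ast}$ and lower-bounds $\ev{g_t,x_{t+1}-x_\ast}$ via a three-point decomposition, which produces $-2\gamma n\br{f(x_{t+1})-f_\ast}$ exactly, with the residual Bregman terms controlled by the \emph{forward} deviation $\sum_i\sqn{x_t^i-x_{t+1}}$ (Lemmas~\ref{lem:convex_rr_deviation} and~\ref{lem:improved_convex}). No Young's inequality against the distance term is needed, so the per-epoch noise is $\frac{\gamma^3Ln^2\sigmaesc^2}{2}$ and strong convexity applied to $f(x_{t+1})-f_\ast$ gives the contraction and the final $\gamma^2\kappa n\sigmaesc^2$. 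Two smaller issues with your argument: (i) to obtain the $\gamma^2n^2\sigmaesc^2$ (rather than $\gamma^2n^3\sigmaesc^2$) coefficient in the drift bound you need the full without-replacement variance identity of Lemma~\ref{lem:sampling_wo_replacement}, i.e.\ the negative pairwise covariances --- the uniform marginals alone, which is all you invoke, only give $n^3$; and (ii) with the natural constants from the Young/Jensen splits your self-consistency coefficient is $c_1=4$, so $\gamma\le\frac{1}{\sqrt2Ln}$ only gives $c_1\gamma^2L^2n^2\le 2$ and you cannot solve for $V_t$; you would need $\gamma\le\frac{1}{2Ln}$ (as in the paper's Lemma~\ref{lemma:vt_rr}) or a sharper split.
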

It is not difficult to show that by properly choosing the stepsize $\gamma$, the guarantee given by Theorem~\ref{thm:only-f-sc-rr-conv} translates to a sample complexity of $\ctO\br{ \kappa n + \frac{\sqrt{\kappa n} \sigmaesc}{\mu \sqrt{\e}} }$, which matches the dependence on the accuracy $\e$ in Theorem~\ref{thm:all-sc-rr-conv} but with $\kappa (n-1)$ additional iterations in the beginning. For $\kappa = 1$, this translates to a sample complexity of $\ctO\br{n + \frac{\sqrt{n} \sigmaesc}{\mu \sqrt{\e}}}$ which is worse than the lower bound of \cite{Safran2020good} when $\e$ is large. In concurrent work, \citet{AhnYun2020} obtain in the same setting a complexity of $\ctO\br{1/\e^{1/\alpha} + \frac{\sqrt{n} G}{\mu \sqrt{\e}}}$ (for a constant $\alpha > 2$), which requires that each $f_i$ is Lipschitz and matches the lower bound only when the accuracy $\e$ is large enough that $1/\e^{1/\alpha} \leq 1$. Obtaining an optimal convergence guarantee for all accuracies $\e$ in the setting of Theorem~\ref{thm:only-f-sc-rr-conv} remains open.

\subsection{Non-strongly convex objectives}
\label{sec:weakly-convex}
We also make a step towards better bounds for RR/SO without any strong convexity at all and provide the following convergence statement.
\begin{theorem}
  \label{thm:weakly-convex-f-rr-conv}
  Let functions $f_1, f_2, \ldots, f_n$ be convex. Suppose that Assumption~\ref{asm:f-smoothness} holds. Then for Algorithm~\ref{alg:rr} or Algorithm~\ref{alg:so} run with a stepsize $\gamma \leq \frac{1}{\sqrt{2} L n}$, the average iterate $\hat{x}_{T} \eqdef \frac{1}{T} \sum_{j=1}^{T} x_j$ satisfies
\[ \textstyle \ec{f(\hat{x}_T) - f(x_\ast)} \le \frac{\sqn{x_0 - x_\ast}}{2 \gamma n T} + \frac{\gamma^2 L n \sigmaesc^2}{4}. \] 
\end{theorem}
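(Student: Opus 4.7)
My plan is to lift the Bregman-divergence framework of Theorem~\ref{thm:all-sc-rr-conv} to the non-strongly-convex regime. Starting from the per-step inequality underlying \eqref{eq:thm_str_cvx_main-proof-1} with $\mu = 0$, we get
\[ \ecn{x_t^{i+1} - x_\ast^{i+1}} \le \ecn{x_t^{i} - x_\ast^{i}} - 2\gamma(1-\gamma L)\,\ec{D_{f_{\pi_i}}(x_t^{i}, x_\ast)} + 2\gamma^{2} \sigmass. \]
Because $\nabla f(x_\ast)=0$ forces $\sum_{j=0}^{n-1}\nabla f_{\pi_j}(x_\ast)=0$, the shifted references satisfy $x_\ast^{0}=x_\ast^{n}=x_\ast$, so telescoping over $i=0,\dots,n-1$ within one epoch (using $x_t^{0}=x_t$ and $x_t^{n}=x_{t+1}$) cancels boundary terms and yields the per-epoch inequality
\[ \ecn{x_{t+1} - x_\ast} \le \ecn{x_t - x_\ast} - 2\gamma(1-\gamma L)\sum_{i=0}^{n-1}\ec{D_{f_{\pi_i}}(x_t^{i}, x_\ast)} + 2\gamma^{2} n\, \sigmass. \]

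The heart of the proof is converting this Bregman sum into a function-value gap centered at $x_t$. I plan to use the algebraic identity
\[ D_{f_{\pi_i}}(x_t^{i}, x_\ast) = D_{f_{\pi_i}}(x_t, x_\ast) + D_{f_{\pi_i}}(x_t^{i}, x_t) + \<\nabla f_{\pi_i}(x_t) - \nabla f_{\pi_i}(x_\ast), x_t^{i} - x_t>, \]
noting that $\sum_{i=0}^{n-1} D_{f_{\pi_i}}(x_t, x_\ast) = n[f(x_t)-f(x_\ast)]$ (again by $\nabla f(x_\ast)=0$), while the middle term is non-negative and may be dropped. The sign-indefinite cross term is the main obstacle: it couples the intra-epoch drift $x_t^{i}-x_t = -\gamma\sum_{j<i}\nabla f_{\pi_j}(x_t^{j})$ with $\nabla f_{\pi_i}(x_t)-\nabla f_{\pi_i}(x_\ast)$. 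My plan is to control it via Young's inequality together with the smoothness bound \eqref{eq:bregman-sc-grad-smooth-properties}, namely $\sqn{\nabla f_{\pi_i}(x_t)-\nabla f_{\pi_i}(x_\ast)} \le 2L\,D_{f_{\pi_i}}(x_t,x_\ast)$, and the drift estimate $\sum_{i} \sqn{x_t^{i}-x_t} \le \gamma^{2} n^{2}\sum_{j}\sqn{\nabla f_{\pi_j}(x_t^{j})}$. The restriction $\gamma \le 1/(\sqrt{2}Ln)$ is calibrated precisely so that the quadratic-in-$\gamma Ln$ residuals are small enough to reabsorb into a constant fraction of the $n[f(x_t)-f(x_\ast)]$ signal, while the leftover gradient-norm residual is swept into the $\sigmass$ budget.

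Finally, the upper half of Proposition~\ref{prop:shuffling-variance-normal-variance-bound}, $\sigmass \le \frac{\gamma L n}{4}\sigmaesc^{2}$, uses only convexity and smoothness and hence carries over unchanged. Substituting yields a clean per-epoch recursion of the form
\[ \ecn{x_{t+1}-x_\ast} \le \ecn{x_t-x_\ast} - 2\gamma n\,\ec{f(x_t)-f(x_\ast)} + \frac{\gamma^{3} L n^{2} \sigmaesc^{2}}{2}. \]
Telescoping over $t=0,\dots,T-1$, dropping the non-negative $\ecn{x_T-x_\ast}$ on the left, dividing through by $2\gamma n T$, and invoking Jensen's inequality $\ec{f(\hat{x}_T) - f(x_\ast)} \le \frac{1}{T}\sum_{t=0}^{T-1}\ec{f(x_t)-f(x_\ast)}$ delivers the stated bound.
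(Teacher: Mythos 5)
Your overall architecture --- running the strongly convex recursion with $\mu=0$, telescoping over the epoch using $x_\ast^0=x_\ast^n=x_\ast$, and then converting $\sum_i D_{f_{\pi_i}}(x_t^i,x_\ast)$ into $n[f(x_t)-f(x_\ast)]$ via a second three-point identity --- is a genuinely different route from the paper, which instead centers the epoch analysis at $x_{t+1}$ (an ``implicit step'' viewpoint, see Lemma~\ref{lem:improved_convex}) and controls the forward deviation $\sum_i\sqn{x_t^i-x_{t+1}}$. The steps up to and including your three-point identity, the sign of each term, and the use of the upper half of Proposition~\ref{prop:shuffling-variance-normal-variance-bound} are all correct. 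However, there is a genuine gap in how you control the cross term, and it costs exactly the factor of $n$ that separates RR from IG. Your drift estimate $\sum_i\sqn{x_t^i-x_t}\le \gamma^2 n^2\sum_j\sqn{\nabla f_{\pi_j}(x_t^j)}$ is purely deterministic: after splitting $\nabla f_{\pi_j}(x_t^j)$ into $\bigl(\nabla f_{\pi_j}(x_t^j)-\nabla f_{\pi_j}(x_\ast)\bigr)+\nabla f_{\pi_j}(x_\ast)$, the second part contributes $2\gamma^2 n^2\sum_j\sqn{\nabla f_{\pi_j}(x_\ast)}=2\gamma^2 n^3\sigmaesc^2$. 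Feeding this through Young's inequality --- which forces a parameter $\beta>L$ so that the $n[f(x_t)-f(x_\ast)]$ signal survives --- produces a residual of order at least $\gamma\cdot L\cdot\gamma^2 n^3\sigmaesc^2=\gamma^3 L n^3\sigmaesc^2$ per epoch, one factor of $n$ more than the $\gamma^3 Ln^2\sigmaesc^2/2$ you claim and than the budget $2\gamma^2 n\sigmass\le\gamma^3 Ln^2\sigmaesc^2/2$ into which you hoped to sweep it. The resulting bound is the IG rate ($\gamma^2 Ln^2\sigmaesc^2$ in the noise term), not the stated RR rate. The fix is to keep the partial sum $\sum_{j<i}\nabla f_{\pi_j}(x_\ast)$ intact, take expectations, and invoke Lemma~\ref{lem:sampling_wo_replacement} to get $\ec{\sqn{\sum_{j<i}\nabla f_{\pi_j}(x_\ast)}}=\frac{i(n-i)}{n-1}\sigmaesc^2=O(n\sigmaesc^2)$ rather than the worst-case $O(n^2\sigmaesc^2)$; this is precisely the role of Lemma~\ref{lem:convex_rr_deviation} in the paper's proof and is the only place where the randomness of the permutation enters the convex analysis.

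A secondary issue: even with the corrected variance bound, the Young step necessarily sacrifices a constant fraction of the signal (you retain at most $(1-L/\beta)\,n[f(x_t)-f(x_\ast)]$, and absorbing the $\frac{\beta}{2}\sum_i\sqn{x_t^i-x_t}$ term back into $\sum_i D_{f_{\pi_i}}(x_t^i,x_\ast)$ shrinks it further), so the coefficient $2\gamma n$ in your final recursion, and hence the exact constants $\frac{1}{2\gamma nT}$ and $\frac{\gamma^2 Ln}{4}$ in the theorem, are not attainable along this route; you would prove the statement only up to absolute constants. The paper avoids this trade-off entirely because its decomposition~\eqref{eq:rr_decomposition} produces $n[f(x_{t+1})-f(x_\ast)]$ exactly, with the deviation terms entering additively rather than through a Cauchy--Schwarz/Young balancing act.
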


Unfortunately, the theorem above relies on small stepsizes, but we still deem it as a valuable contribution, since it is based on a novel analysis. Indeed, the prior works showed that RR approximates a full gradient step, but we show that it is even closer to the implicit gradient step, see the appendix.

To translate the recursion in Theorem~\ref{thm:weakly-convex-f-rr-conv} to a complexity, one can choose a small stepsize and obtain (Corollary~\ref{corr:weakly-convex-f} in the appendix) the following bound for RR/SO:
\[ \textstyle  \ec{f(\hat{x}_T) - f(x_\ast)} = \mathcal{O}\br{ \frac{L \sqn{x_0 - x_\ast}}{T} + \frac{L^{1/3} \norm{x_0 - x_\ast}^{4/3} \sigmaesc^{2/3}}{n^{1/3} T^{2/3}} }. \]
\citet{Stich2019b} gives a convergence upper bound of $\mathcal{O}\br{ \frac{L \sqn{x_0 - x_\ast}}{nT} + \frac{\sigmaesc \norm{x_0 - x_\ast}}{\sqrt{n T}}}$ for SGD. Comparing upper bounds, we see that RR/SO beats SGD when the number of epochs satisfies $T \geq \frac{L^2 \sqn{x_0 - x_\ast} n}{\sigmaesc^2}$. To the best of our knowledge, there are no strict lower bounds in this setting. \citet{Safran2020good} suggest a lower bound of $\Omega\br{ \frac{\sigmaesc}{\sqrt{n T^3}} + \frac{\sigmaesc}{n T}} $ by setting $\mu$ to be small in their lower bound for $\mu$-strongly convex functions, however this bound may be too optimistic.

\subsection{Non-convex objectives}
\label{sec:non-convex}

For non-convex objectives, we formulate the following assumption on the gradients variance.
\begin{assumption}
  \label{asm:2nd-moment}
  There exist nonnegative constants $A, B \geq 0$ such that for any $x \in \R^d$ we have,
  \begin{equation}
    \label{eq:2nd-moment-bound}
 \textstyle    \frac{1}{n} \sum \limits_{i=1}^{n} \sqn{\nabla f_{i} (x) - \nabla f(x)} \leq 2 A \br{f(x) - f_\ast} + B^2.
  \end{equation}
\end{assumption}
Assumption~\ref{asm:2nd-moment} is quite general: if there exists some $G > 0$ such that $\norm{\nabla f_{i} (x)} \leq G$ for all $x \in \R^d$ and $i \in \{ 1, 2, \ldots, n \}$, then Assumption~\ref{asm:2nd-moment} is clearly satisfied by setting $A = 0$ and $B = G$. Assumption~\ref{asm:2nd-moment} also generalizes the uniformly bounded variance assumption commonly invoked in work on non-convex SGD, which is equivalent to~\eqref{eq:2nd-moment-bound} with $A=0$. Assumption~\ref{asm:2nd-moment} is a special case of the Expected Smoothness assumption of \citet{Khaled2020}, and it holds whenever each $f_{i}$ is smooth and lower-bounded, as the next proposition shows.

\begin{proposition}
  \label{prop:2nd-moment-bound}
  \citep[special case of Proposition 3]{Khaled2020} Suppose that $f_{1}, f_2, \ldots, f_n$ are lower bounded by $f_1^\ast, f_2^\ast, \ldots, f_n^\ast$ respectively and that Assumption~\ref{asm:f-smoothness} holds. Then there exist constants $A, B \geq 0$ such that Assumption~\ref{asm:2nd-moment} holds.
\end{proposition}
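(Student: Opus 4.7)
The plan is to derive the inequality in Assumption~\ref{asm:2nd-moment} from three elementary ingredients: the standard ``gradient-norm-squared dominated by suboptimality'' bound for smooth lower-bounded functions, the fact that a sum of squared deviations from the mean is at most the sum of squares, and a one-line rearrangement to express things relative to $f_\ast$.

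First I would prove the auxiliary bound $\sqn{\nabla f_i(x)} \le 2L\br{f_i(x) - f_i^\ast}$ for every $i$ and every $x \in \R^d$. This follows from Assumption~\ref{asm:f-smoothness}: applying the smoothness descent inequality for $f_i$ at the point $y = x - \tfrac{1}{L}\nabla f_i(x)$ yields $f_i(y) \le f_i(x) - \tfrac{1}{2L}\sqn{\nabla f_i(x)}$, and using the lower bound $f_i(y) \ge f_i^\ast$ then rearranges to the claim. This step is the whole content of the proposition and the only place where individual (as opposed to merely aggregate) lower-boundedness is used; it is also the main obstacle in the sense that everything else is essentially bookkeeping.

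Next I would dominate the variance-style quantity by the raw second moment using the elementary inequality $\tfrac{1}{n}\sum_i \sqn{a_i - \bar a} \le \tfrac{1}{n}\sum_i \sqn{a_i}$, applied with $a_i = \nabla f_i(x)$ and $\bar a = \nabla f(x)$. Chaining with the previous step gives
\[
\frac{1}{n}\sum_{i=1}^n \sqn{\nabla f_i(x) - \nabla f(x)} \;\le\; \frac{1}{n}\sum_{i=1}^n \sqn{\nabla f_i(x)} \;\le\; \frac{2L}{n}\sum_{i=1}^n \br{f_i(x) - f_i^\ast} \;=\; 2L\br{f(x) - \bar f^\ast},
\]
where $\bar f^\ast \eqdef \tfrac{1}{n}\sum_{i=1}^n f_i^\ast$.

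Finally I would insert $\pm f_\ast$ to rewrite $f(x) - \bar f^\ast = \br{f(x) - f_\ast} + \br{f_\ast - \bar f^\ast}$. Since $f(x) \ge \bar f^\ast$ for every $x$, we may assume without loss of generality that $f_\ast \ge \bar f^\ast$ (take $f_\ast = \inf_x f(x)$ if needed), in which case setting $A = L$ and $B^2 = 2L\br{f_\ast - \bar f^\ast} \ge 0$ verifies Assumption~\ref{asm:2nd-moment}. If instead $f_\ast < \bar f^\ast$, the extra term is negative and the weaker choice $A = L$, $B = 0$ already suffices. Either way suitable nonnegative constants exist, which completes the plan.
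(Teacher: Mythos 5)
Your proposal is correct and follows essentially the same route as the paper's proof: the bound $\sqn{\nabla f_i(x)} \leq 2L\br{f_i(x) - f_i^\ast}$, domination of the variance by the raw second moment, and the choice $A = L$, $B^2 = 2L\br{f_\ast - \frac{1}{n}\sum_{i=1}^n f_i^\ast}$. The only cosmetic difference is that you entertain the case $f_\ast < \frac{1}{n}\sum_i f_i^\ast$, which the paper rules out by noting that $\frac{1}{n}\sum_i f_i^\ast$ is itself a lower bound on $f$ and $f_\ast$ is its infimum.
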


We now give our main convergence theorem for RR without assuming convexity.
\begin{theorem}
  \label{thm:rr-nonconvex}
  Suppose that Assumptions~\ref{asm:f-smoothness} and~\ref{asm:2nd-moment} hold. Then for Algorithm~\ref{alg:rr} run for $T$ epochs with a stepsize $\gamma \leq \min \br{ \frac{1}{2 L n}, \frac{1}{(A L^2 n^2 T)^{1/3}} }$ we have
  \[ 
 \textstyle    \min \limits_{t=0, \ldots, T-1} \ecn{\nabla f(x_t)} \leq \frac{12 \br{f(x_0) - f_\ast}}{\gamma n T} + 2 \gamma^2 L^2 n B^2. 
  \]
  If, in addition, $A=0$ and $f$ satisfies the Polyak-{\L}ojasiewicz inequality with $\mu>0$, i.e., $\|\nabla f(x)\|^2 \ge 2\mu(f(x)-f_*)$ for any $x\in\R^d$, then
  \[
	   \textstyle\ec{f(x_t)-f_*}
	  \le \br{1-\frac{\gamma\mu n}{2}}^T(f(x_0)-f_*) + \gamma^2\kappa L n B^2.
  \]
\end{theorem}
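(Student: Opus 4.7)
My plan is to establish a one-epoch descent inequality for $f$, telescope it over $T$ epochs, and exploit the two stepsize constraints to absorb unfavourable terms. Since a random permutation exhausts $\{1,\dots,n\}$ deterministically, we have $\sum_{i=0}^{n-1}\nabla f_{\pi_i}(x_t)=n\nabla f(x_t)$ regardless of the random ordering. Applying $L$-smoothness of $f$ to the full epoch update $x_{t+1}-x_t=-\gamma\sum_{i=0}^{n-1}\nabla f_{\pi_i}(x_t^i)$, writing $\nabla f_{\pi_i}(x_t^i)=\nabla f_{\pi_i}(x_t)+[\nabla f_{\pi_i}(x_t^i)-\nabla f_{\pi_i}(x_t)]$, and bounding the perturbation by Young's inequality and smoothness should yield a bound of the form
\[\mathbb{E}[f(x_{t+1})]\le f(x_t)-\tfrac{\gamma n}{2}\|\nabla f(x_t)\|^2+\tfrac{\gamma L^2}{2}\sum_{i=0}^{n-1}\mathbb{E}\|x_t^i-x_t\|^2+\tfrac{L\gamma^2}{2}\mathbb{E}\Bigl\|\sum_{i=0}^{n-1}\nabla f_{\pi_i}(x_t^i)\Bigr\|^2.\]
Everything then hinges on controlling the within-epoch drift $\|x_t^i-x_t\|^2$ and the squared sum of inner gradients.

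\textbf{Drift lemma.} The core estimate is a bound on $\mathbb{E}\|x_t^i-x_t\|^2$. Unrolling $x_t^i-x_t=-\gamma\sum_{j=0}^{i-1}\nabla f_{\pi_j}(x_t^j)$, applying Cauchy--Schwarz, and using $L$-smoothness to replace $\nabla f_{\pi_j}(x_t^j)$ by $\nabla f_{\pi_j}(x_t)$ at the cost of a further drift term, I would obtain an inductive recursion in $i$ that the stepsize condition $\gamma Ln\le 1/2$ closes. Combined with Assumption~\ref{asm:2nd-moment}, which gives $\tfrac{1}{n}\sum_{i}\|\nabla f_i(x_t)\|^2\le 2A(f(x_t)-f_*)+B^2+\|\nabla f(x_t)\|^2$, this should produce an estimate of the form $\sum_{i=0}^{n-1}\mathbb{E}\|x_t^i-x_t\|^2\le C\gamma^2 n^3\|\nabla f(x_t)\|^2+C\gamma^2 n^3\bigl[2A(f(x_t)-f_*)+B^2\bigr]$ for an absolute constant $C$.

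\textbf{Descent and telescoping.} Substituting the drift bound back and using $\gamma\le 1/(2Ln)$ to absorb the $\|\nabla f(x_t)\|^2$ contribution into the negative leading term, I arrive at a clean per-epoch descent
\[\mathbb{E}[f(x_{t+1})]\le f(x_t)-\tfrac{\gamma n}{4}\|\nabla f(x_t)\|^2+c_1\gamma^3L^2n^3A(f(x_t)-f_*)+c_2\gamma^3L^2n^3B^2\]
with absolute constants $c_1,c_2$. Summing over $t=0,\dots,T-1$, the condition $\gamma\le(AL^2n^2T)^{-1/3}$ ensures the aggregated $A$-term is at most a constant multiple of $f(x_0)-f_*$ and can be absorbed on the left, while the $B^2$ term contributes $\mathcal{O}(\gamma^2L^2n^2B^2)$ after dividing by $\gamma nT$; this delivers the $\min_t\mathbb{E}\|\nabla f(x_t)\|^2$ bound. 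For the second statement, $A=0$ removes the problematic term, and the PL inequality $\|\nabla f(x_t)\|^2\ge 2\mu(f(x_t)-f_*)$ turns the descent into $\mathbb{E}[f(x_{t+1})-f_*]\le (1-\gamma\mu n/2)\mathbb{E}[f(x_t)-f_*]+c_2\gamma^3L^2n^3B^2$, which unrolls to a geometric sum bounded by $c_2\gamma^2LnB^2/\mu=\mathcal{O}(\gamma^2\kappa LnB^2)$, matching the claimed rate.

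\textbf{Main obstacle.} The delicate piece is the drift lemma: because RR samples without replacement the inner indices are correlated, and a naive induction on $\mathbb{E}\|x_t^i-x_t\|^2$ threatens to compound a factor $(1+\gamma L)$ per inner step, which across $n$ inner steps demands $\gamma Ln$ small precisely to keep the cumulative amplification constant and produce the $n^3$ scaling rather than something worse. Getting the constants right in this induction, and doing so without any convexity of the $f_i$'s while still converting the $\tfrac{1}{n}\sum_i\|\nabla f_i(x_t)\|^2$ via Assumption~\ref{asm:2nd-moment} in a way that cleanly couples with the stepsize, is where the real work lies.
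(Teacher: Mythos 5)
Your overall architecture matches the paper's: a one-epoch descent inequality from $L$-smoothness of $f$, a bound on the within-epoch drift $V_t=\sum_{i=0}^{n-1}\|x_t^i-x_t\|^2$, and a recursion/telescoping argument (with the PL case handled by geometric unrolling). However, there is a genuine gap in your drift lemma, and it is precisely the step that distinguishes RR from plain incremental gradient. You propose
\[
\sum_{i=0}^{n-1}\mathbb{E}\|x_t^i-x_t\|^2\le C\gamma^2 n^3\|\nabla f(x_t)\|^2+C\gamma^2 n^3\bigl[2A(f(x_t)-f_*)+B^2\bigr],
\]
with $n^3$ on the variance term. This is what a deterministic Cauchy--Schwarz bound on $\bigl\|\sum_{j<k}\nabla f_{\pi_j}(x_t)\bigr\|^2$ gives, and it is the IG estimate (cf.\ Theorem~\ref{thm:ig}, whose non-convex bound ends with $8\gamma^2L^2n^2B^2$). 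With it, the $B^2$ contribution to the final rate is $\mathcal{O}(\gamma^2L^2n^2B^2)$ --- you say so yourself --- whereas the theorem claims $2\gamma^2L^2nB^2$, a factor of $n$ better. Worse, the $A$-term per epoch becomes $\Theta(\gamma^3L^2n^3A)\,(f(x_t)-f_*)$, and under the stated stepsize condition $\gamma\le(AL^2n^2T)^{-1/3}$ the accumulated amplification $(1+c\gamma^3AL^2n^3)^T\le\exp(c\gamma^3AL^2n^3T)$ can be as large as $e^{cn}$, so the absorption step fails as well; your route would require the smaller stepsize $\gamma\le(AL^2n^3T)^{-1/3}$.

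The missing ingredient is the variance formula for sampling without replacement (Lemma~\ref{lem:sampling_wo_replacement}): for partial sums of gradients evaluated at the \emph{fixed} point $x_t$,
\[
\mathbb{E}_t\Bigl\|\sum_{j=0}^{k-1}\nabla f_{\pi_j}(x_t)\Bigr\|^2=k^2\|\nabla f(x_t)\|^2+\frac{k(n-k)}{n-1}\sigma_t^2,
\]
where $\sigma_t^2=\frac1n\sum_j\|\nabla f_j(x_t)-\nabla f(x_t)\|^2$. The negative covariance between distinct without-replacement draws makes the deviation coefficient $\frac{k(n-k)}{n-1}=O(n)$ instead of $O(k^2)$, and summing over $k=0,\dots,n-1$ gives $O(n^2)\sigma_t^2$ rather than $O(n^3)\sigma_t^2$. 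This yields the paper's drift bound $\mathbb{E}_t[V_t]\le\gamma^2n^3\|\nabla f(x_t)\|^2+\gamma^2n^2\sigma_t^2$ (Lemma~\ref{lemma:vt_rr}), which is exactly what makes both the stated stepsize condition and the claimed $nB^2$ (rather than $n^2B^2$) dependence work. Your handling of the remaining steps --- absorbing the $\|g_t\|^2$ term via $\gamma Ln\le1/2$, converting $\sigma_t^2$ with Assumption~\ref{asm:2nd-moment}, telescoping via a weighted recursion, and the PL unrolling --- is fine once this is fixed.
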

\textbf{Comparison with SGD.} From Theorem~\ref{thm:rr-nonconvex}, one can recover the complexity that we provide in Table~\ref{tab:conv-rates}, see Corollary~\ref{corr:rr-nonconvex} in the appendix. Let's ignore some constants not related to our assumptions and specialize to uniformly bounded variance. Then, the sample complexity of RR, 
$ K_{\mathrm{RR}} \geq \frac{ L \sqrt{n}}{\e^2} ( \sqrt{n}+ \frac{\sigma}{\e} ), $
becomes better than that of SGD,
$ K_{\mathrm{SGD}} \geq \frac{L}{\e^2} (1+ \frac{\sigma^2}{\e^2})$, whenever $\sqrt{n} \e \leq \sigma$.

\section{Experiments}
\begin{figure}[t]
	\noindent\makebox[\textwidth]{
	\includegraphics[width=0.33\linewidth]{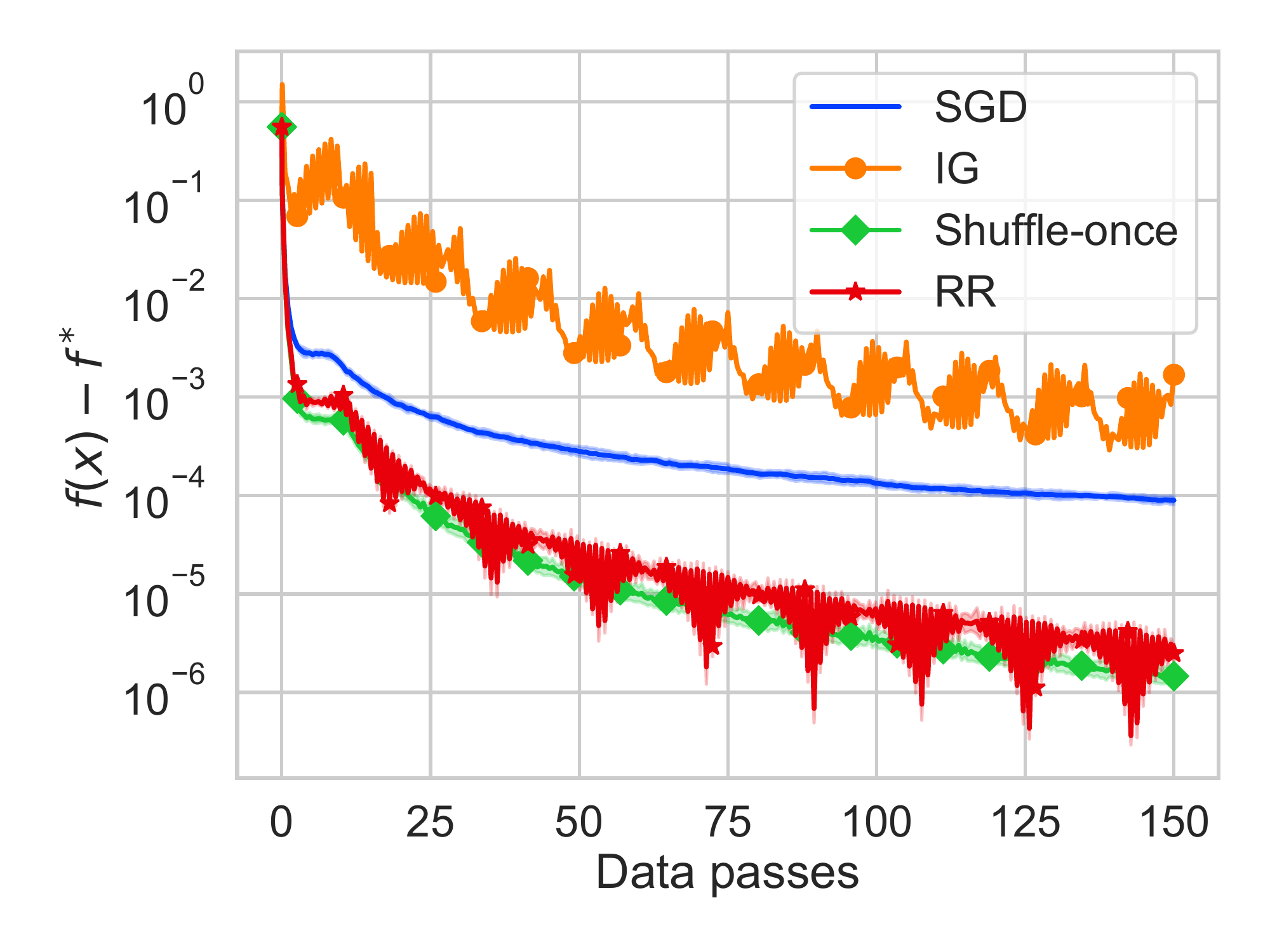}
	\includegraphics[width=0.33\linewidth]{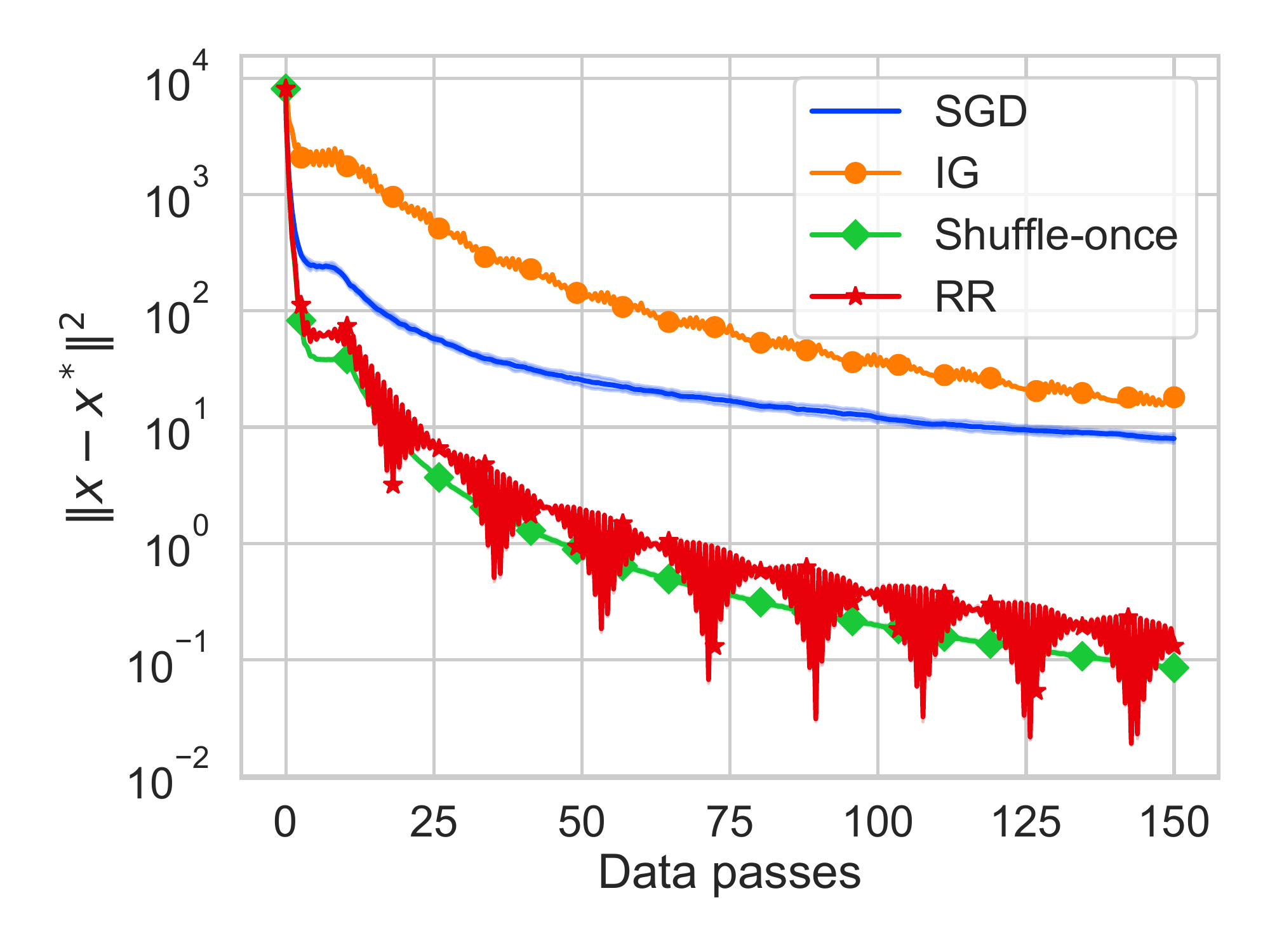}
	\includegraphics[width=0.33\linewidth]{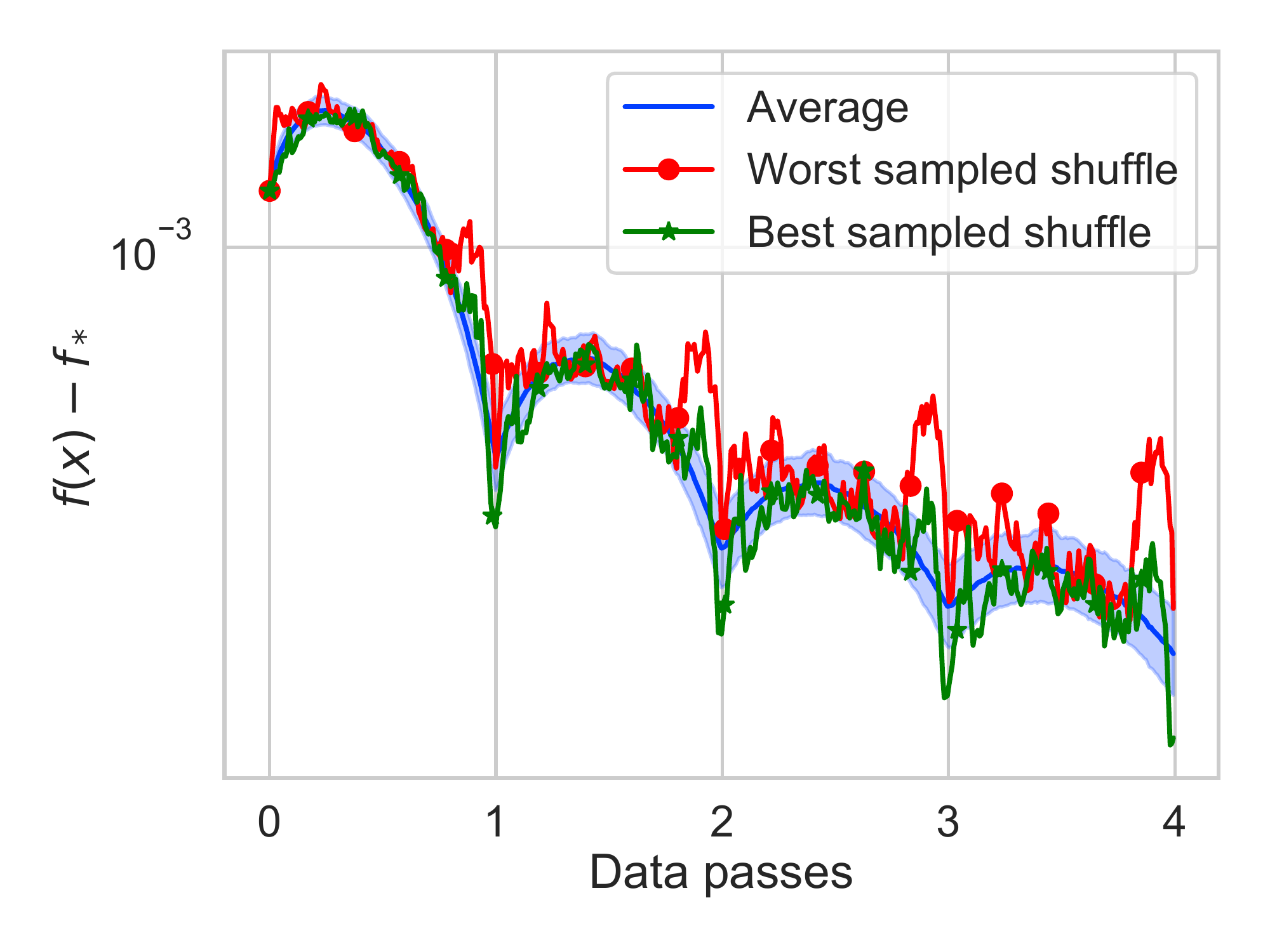}}
	\noindent\makebox[\textwidth]{
	\includegraphics[width=0.33\linewidth]{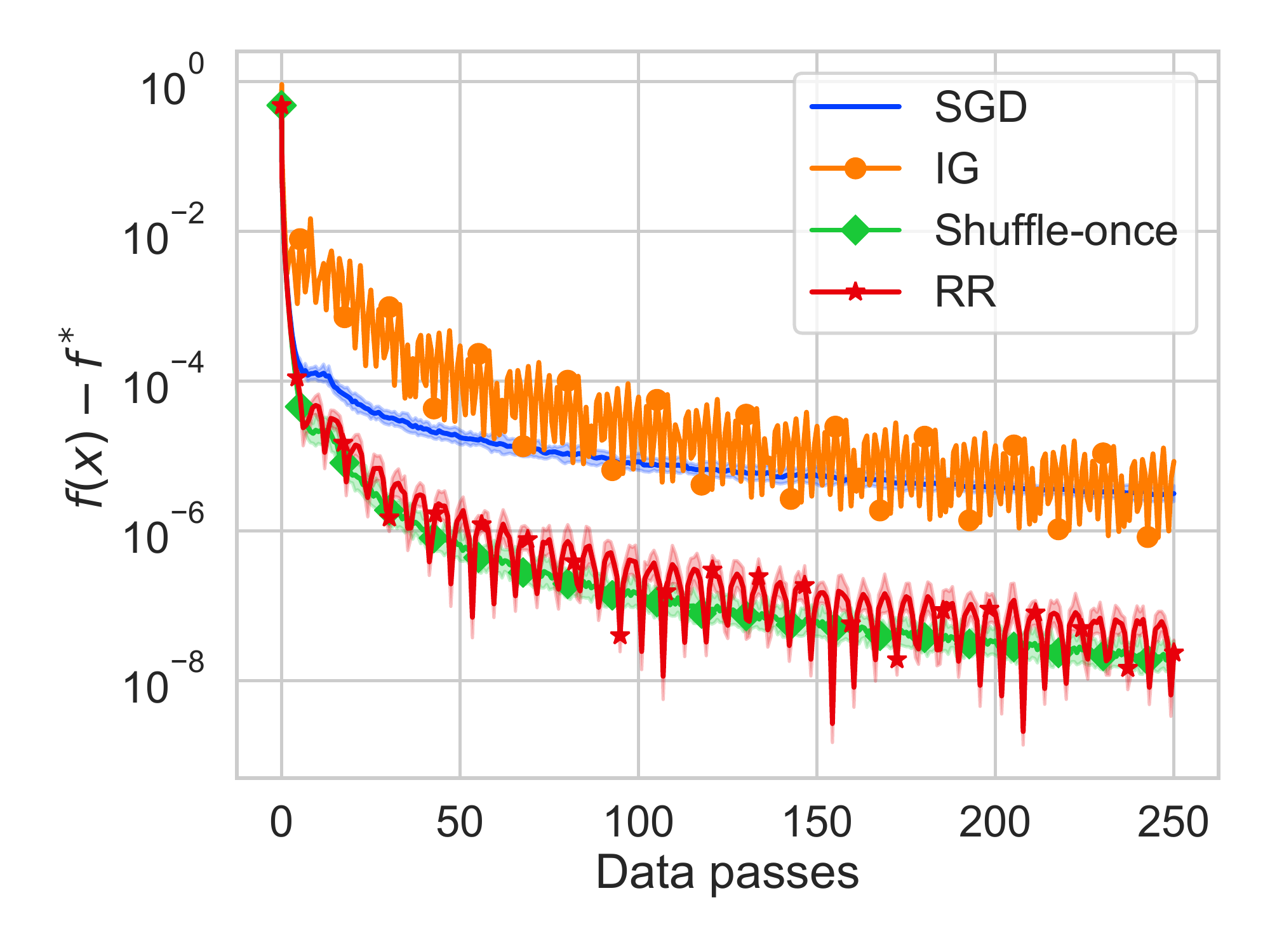}
	\includegraphics[width=0.33\linewidth]{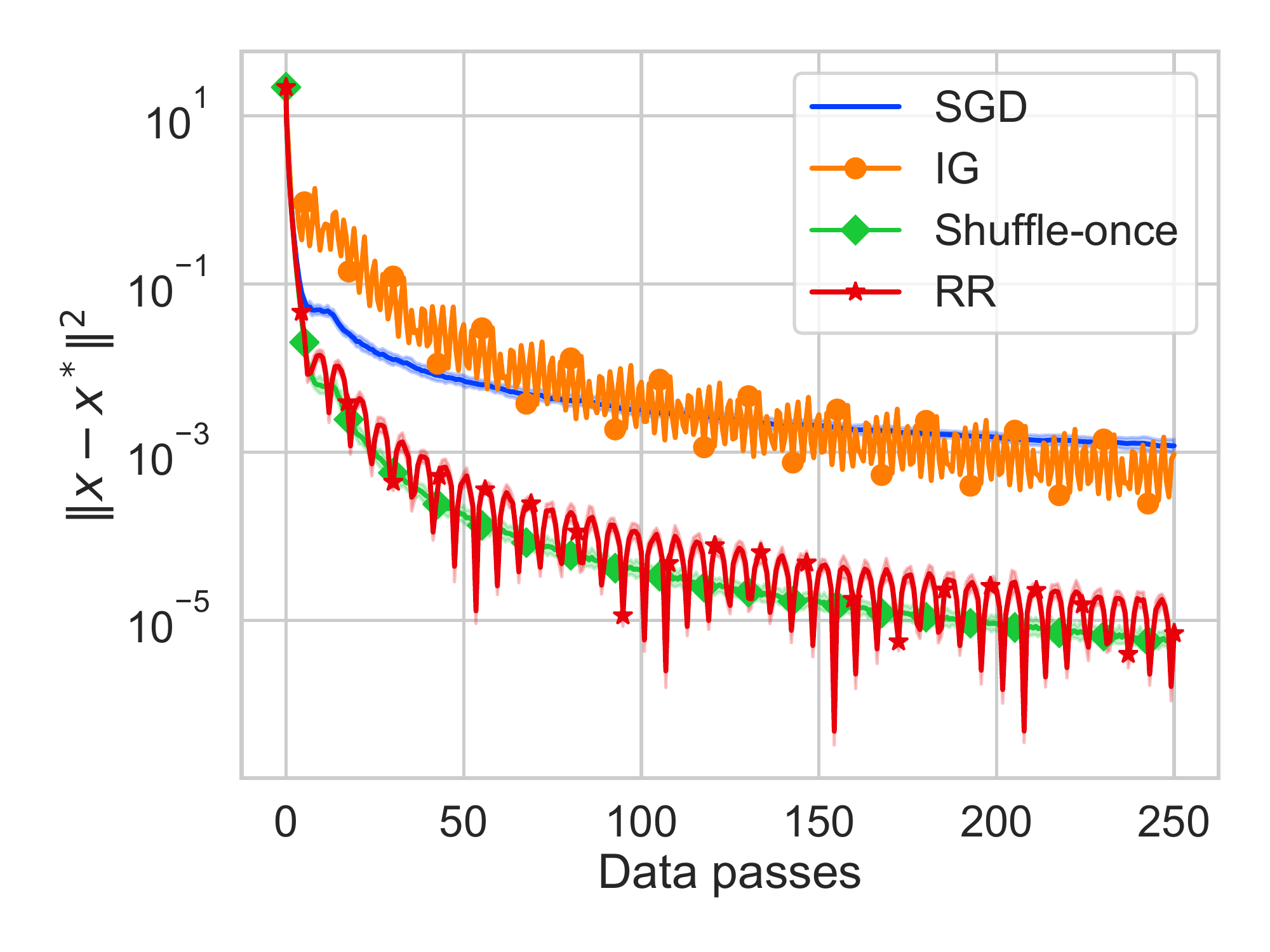}
	\includegraphics[width=0.33\linewidth]{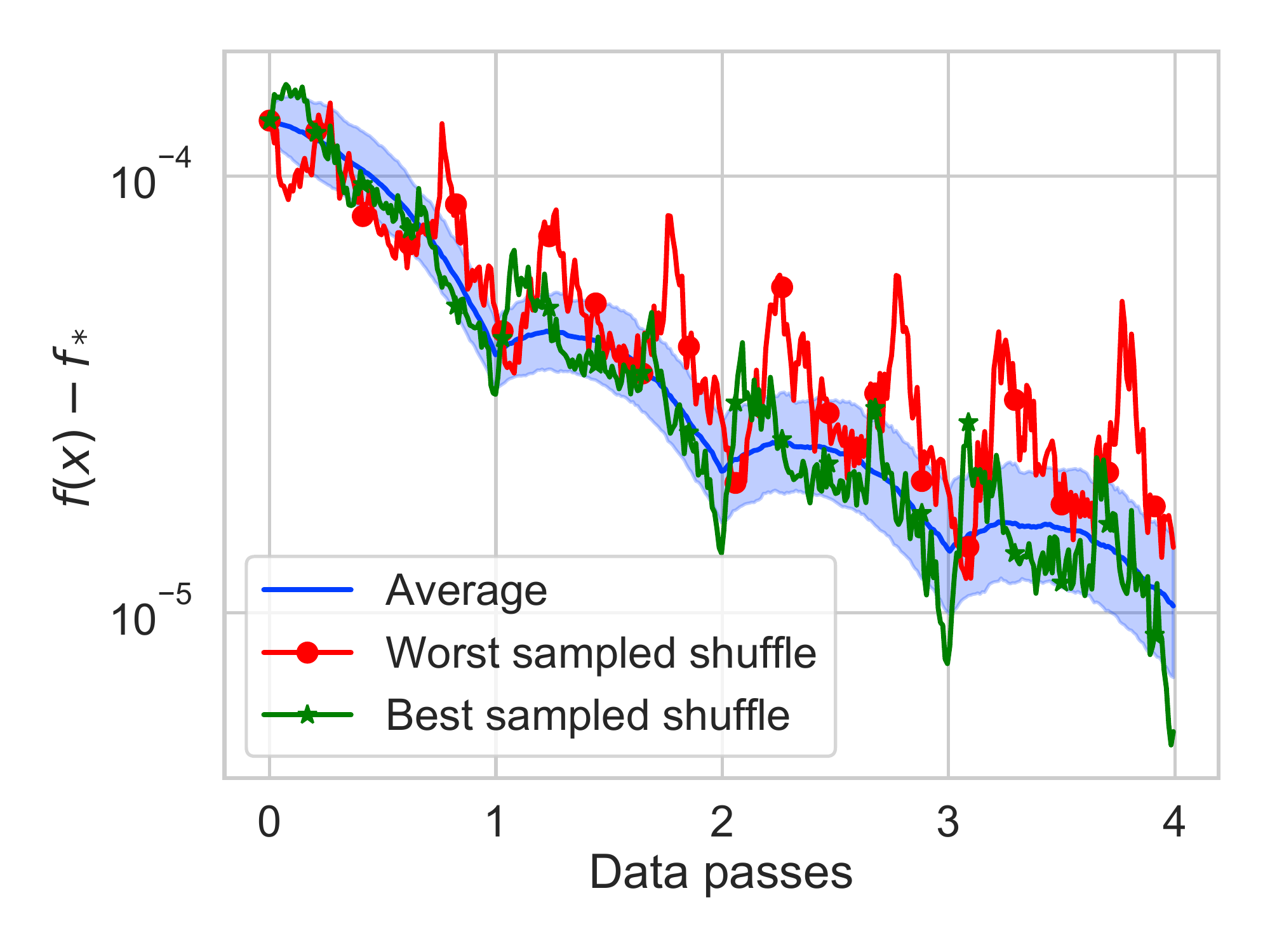}}
	\noindent\makebox[\textwidth]{
	\includegraphics[width=0.33\linewidth]{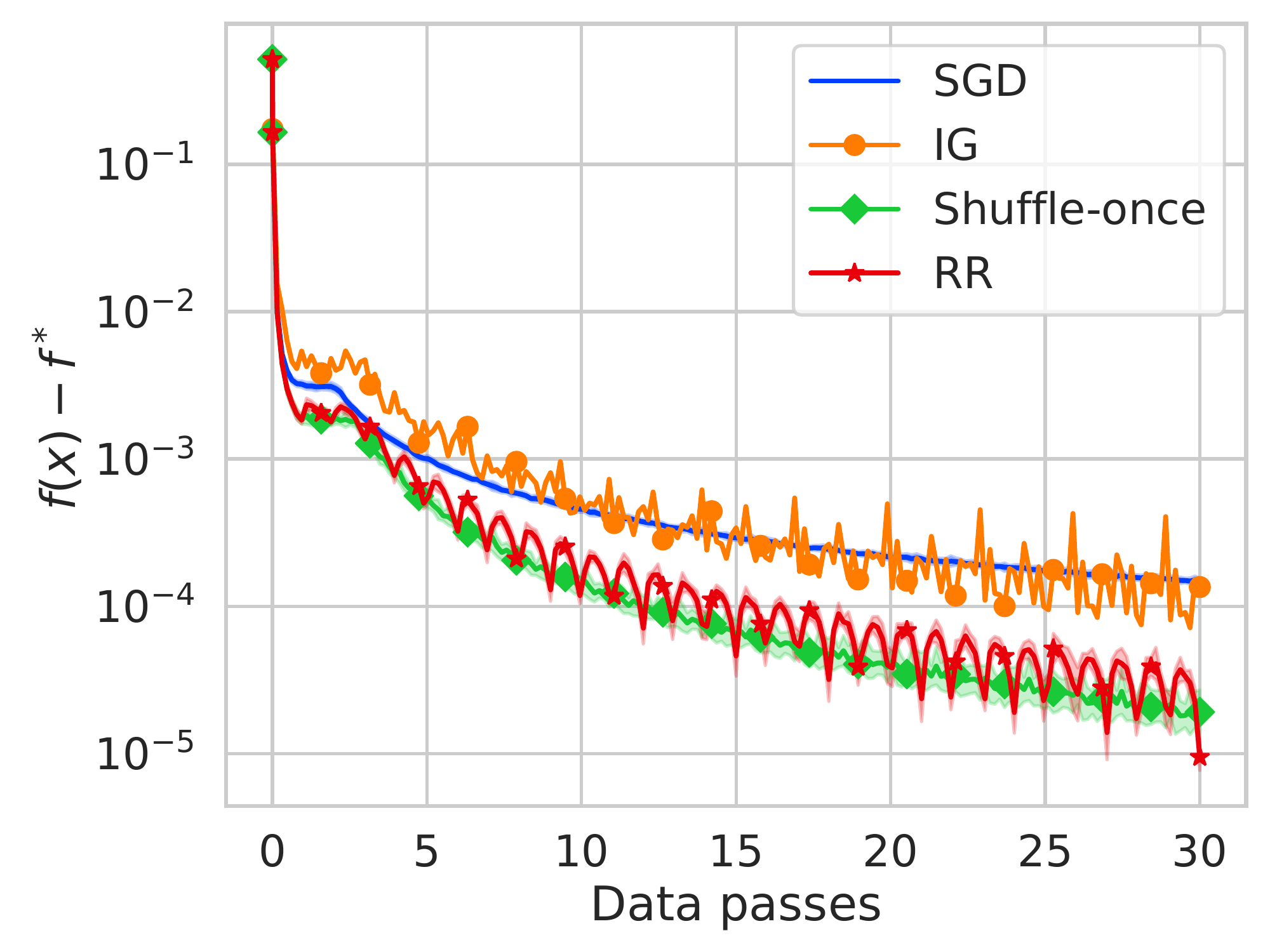}
	\includegraphics[width=0.33\linewidth]{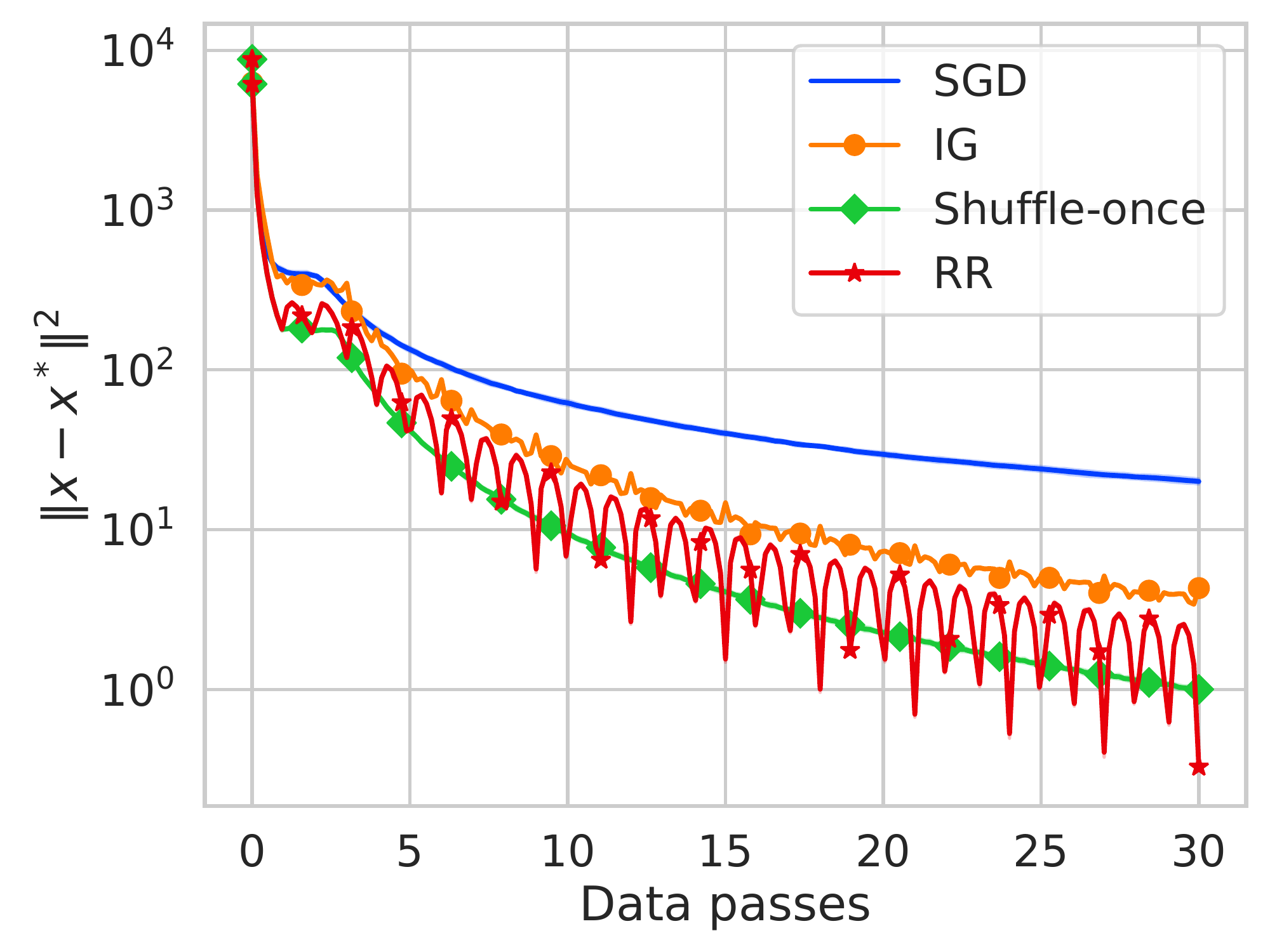}
	\includegraphics[width=0.33\linewidth]{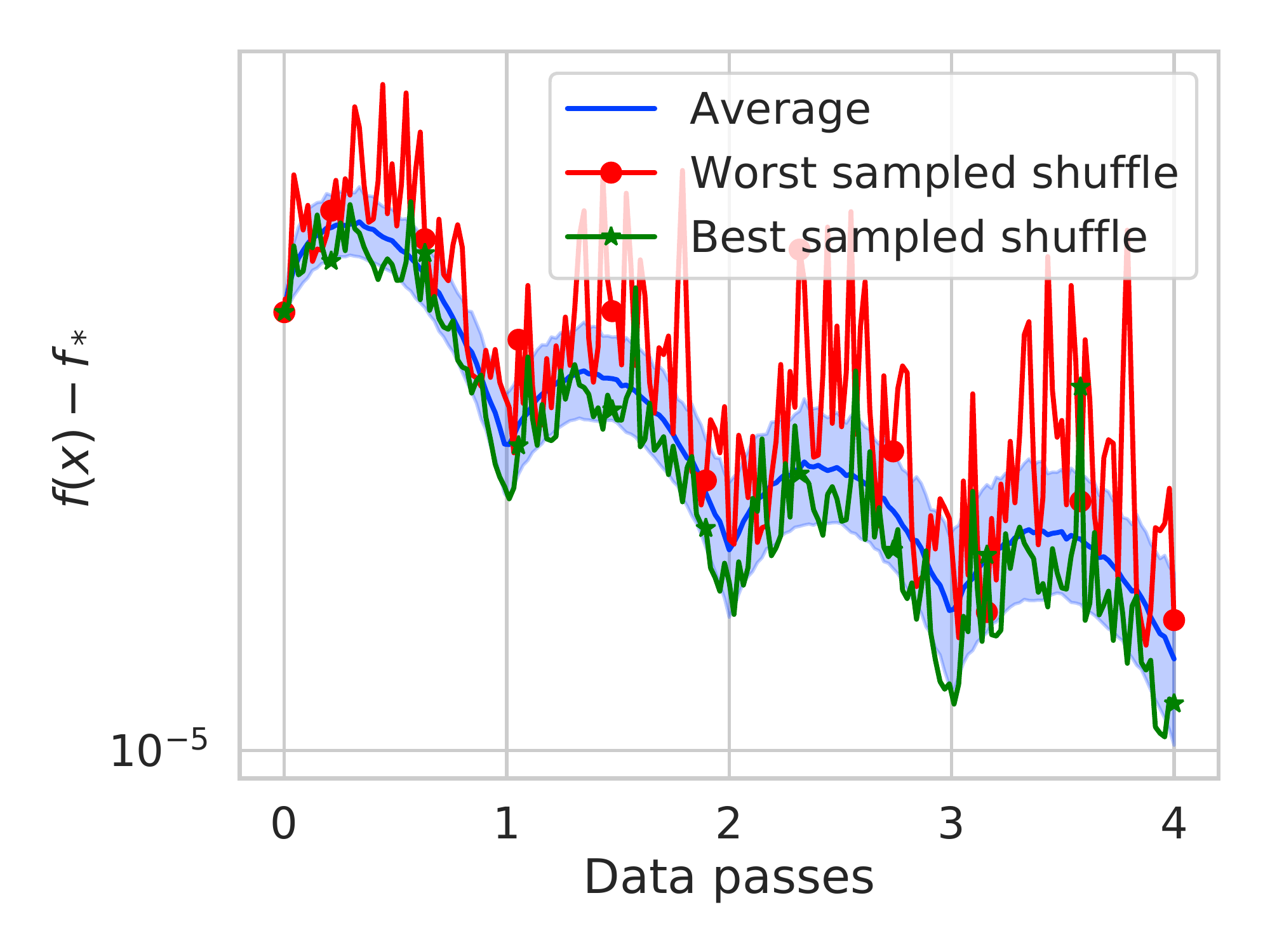}}
\caption{Top: {\tt real-sim} dataset ($N=72,309$; $d=20,958$), middle row: {\tt w8a} dataset ($N=49,749$; $d=300$), bottom: {\tt RCV1} dataset ($N=804,414$; $d=47,236$). Left: convergence of $f(x_t^i)$, middle column: convergence of $\|x_t^i-x_\ast\|^2$, right: convergence of SO with different permutations	.}
\label{fig:conv_plots}
\end{figure}

\label{sec:experiments}
We run our experiments on the $\ell_2$-regularized logistic regression problem given by
\[
\textstyle	\frac{1}{N}\sum \limits_{i=1}^N \br{-\big(b_i \log \big(h(a_i^\top x)\big) + (1-b_i)\log\big(1-h(a_i^\top x)\big)\big)}+\frac{\lambda}{2}\|x\|^2,
\]
where $(a_i, b_i)\in \R^d\times \{0, 1\}$, $i=1,\dotsc, N$ are the data samples and $h\colon t\to1/(1+e^{-t})$ is the sigmoid function. For better parallelism, we use minibatches of size 512 for all methods and datasets. 
We set $\lambda=L/\sqrt{N}$ and use stepsizes decreasing as $\cO(1/t)$. See the appendix for more details on the parameters used, implementation details, and reproducibility.

\textbf{Reproducibility.} Our code is provided at \href{https://github.com/konstmish/random_reshuffling}{https://github.com/konstmish/random\_reshuffling}. All used datasets are publicly available and all additional implementation details are provided in the appendix.

\textbf{Observations.} One notable property of all shuffling methods is that they converge with oscillations, as can be seen in Figure~\ref{fig:conv_plots}. There is nothing surprising about this as the proof of our Theorem~\ref{thm:all-sc-rr-conv} shows that the intermediate iterates converge to $x_*^i$ instead of $x_*$. It is, however, surprising how striking the difference between the intermediate iterates within one epoch can be.

Next, one can see that SO and RR converge almost the same way, which is in line with Theorem~\ref{thm:all-sc-rr-conv}. On the other hand, the contrast with IG is dramatic, suggesting existence of bad permutations. The probability of getting such a permutation seems negligible; see the right plot in Figure~\ref{fig:variance}.

Finally, we remark that the first two plots in Figure~\ref{fig:variance} demonstrate the importance of the new variance introduced in Definition~\ref{def:bregman-div-noise}. The upper and lower bounds from Proposition~\ref{prop:shuffling-variance-normal-variance-bound} are depicted in these two plots and one can observe that the lower bound is often closer to the actual value of $\sigmass$ than the upper bound. And the fact that $\sigmass$ very quickly becomes smaller than $\sigmaesc^2$ explains why RR often outperforms SGD starting from  early iterations.

\begin{figure}[h]
	\noindent\makebox[\textwidth]{
	\includegraphics[width=0.33\linewidth]{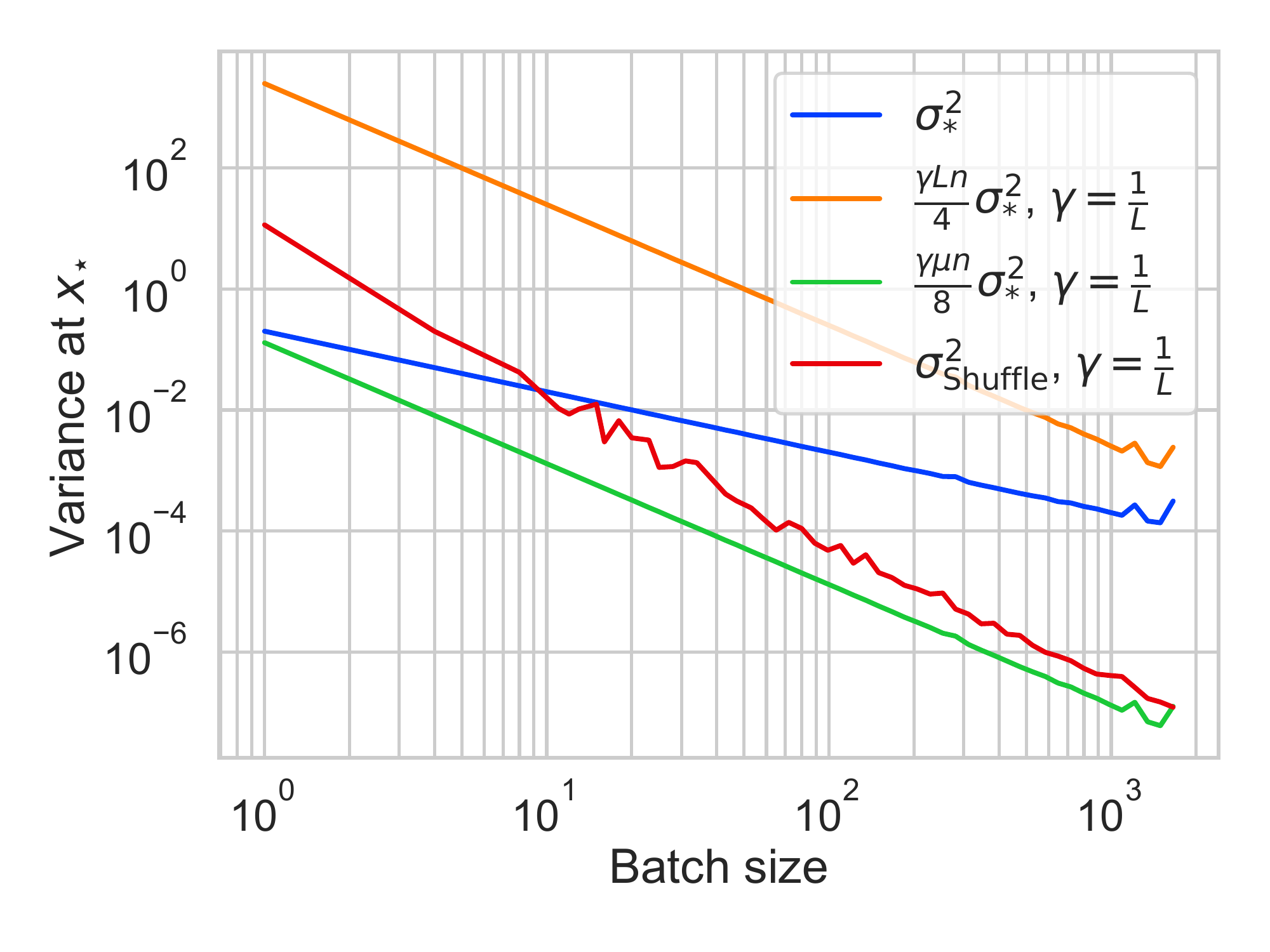}
	\includegraphics[width=0.33\linewidth]{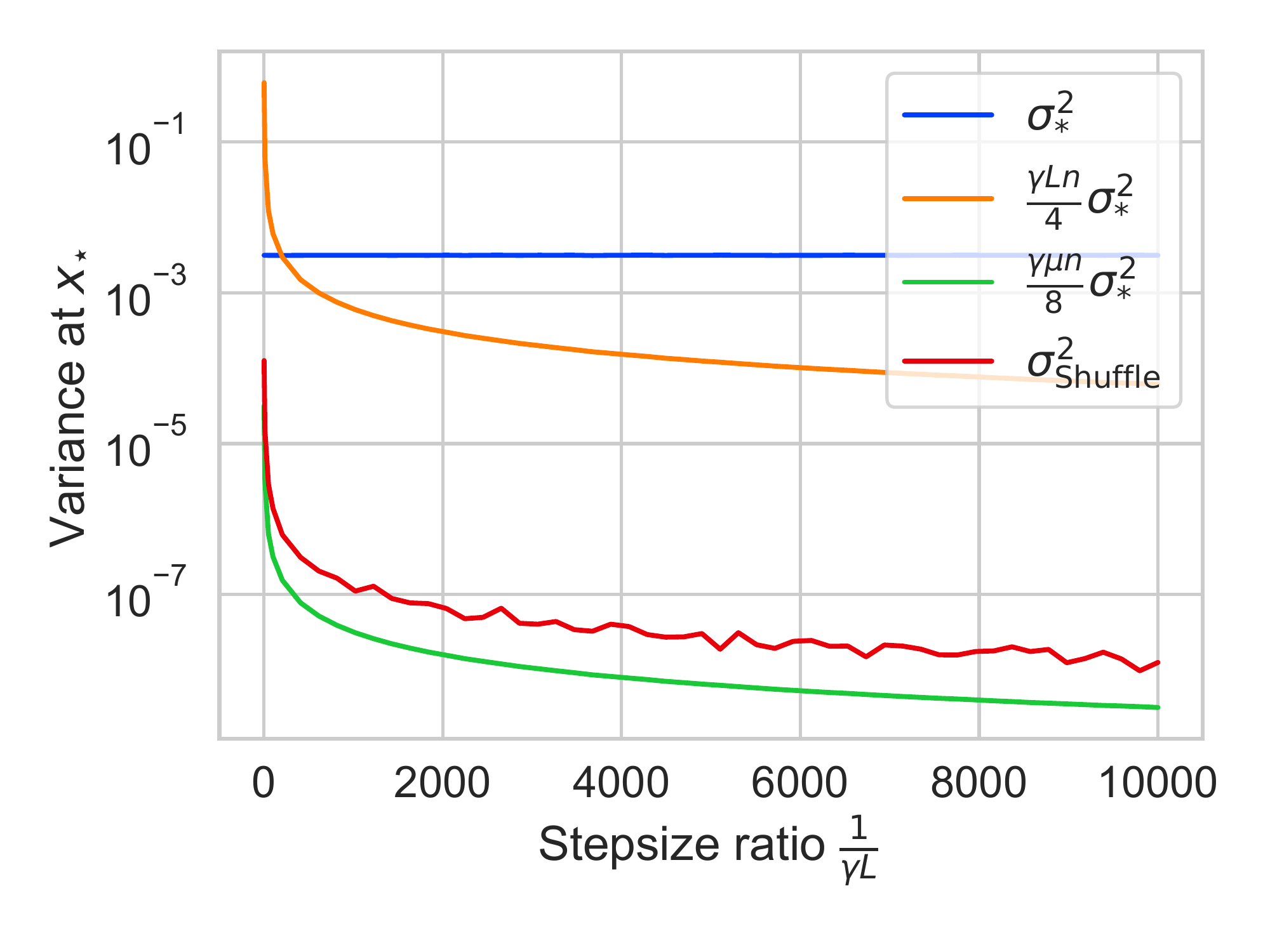}
	\includegraphics[width=0.33\linewidth]{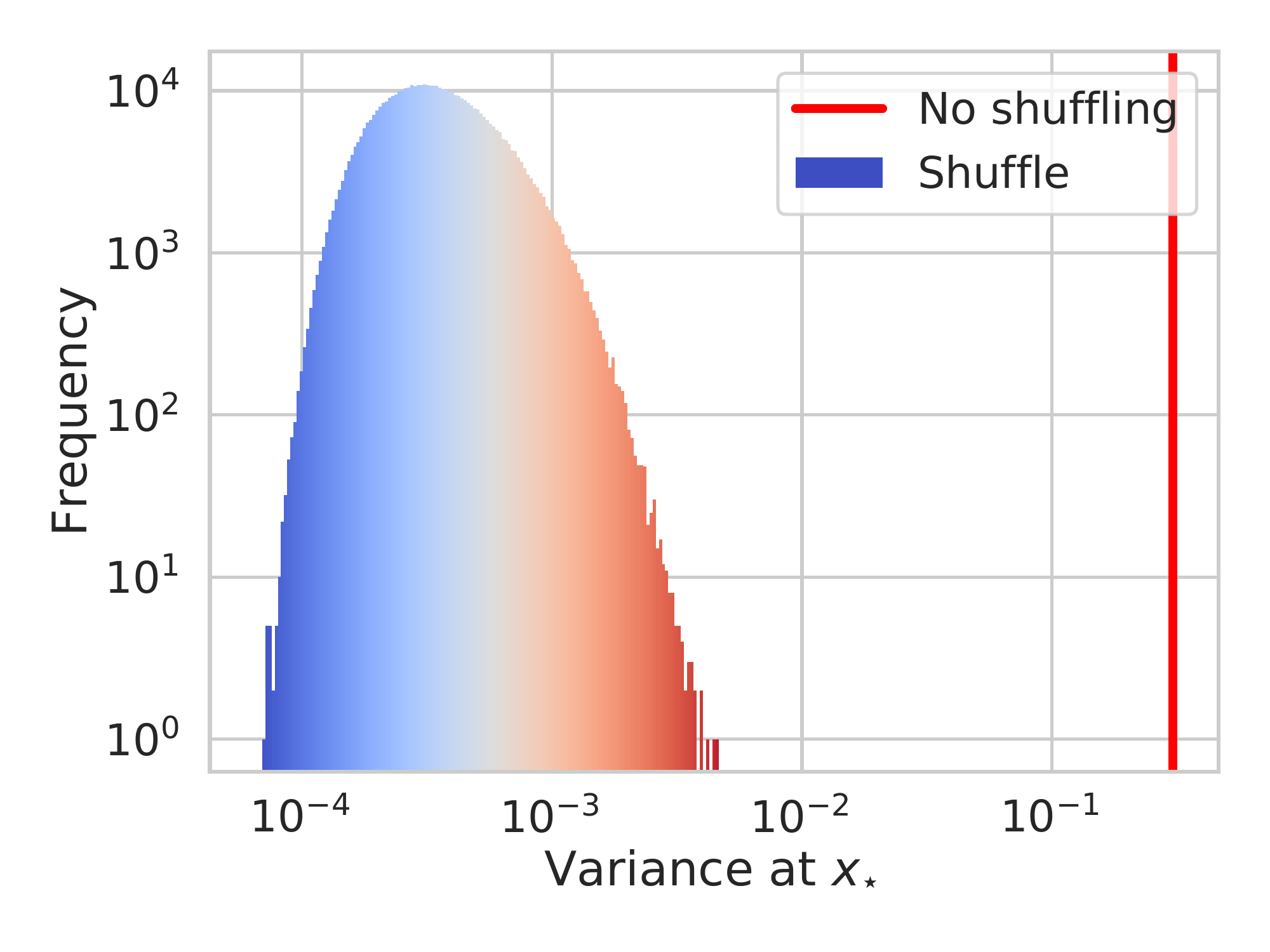}
	}
\caption{Estimated variance at the optimum, $\sigmass$ and $\sigmaesc^2$, for the {\tt w8a} dataset. Left: the values of variance for different minibatch sizes with $\gamma=\nicefrac{1}{L}$. Middle: variance with fixed minibatch size 64 for different $\gamma$, starting with $\gamma=\nicefrac{1}{L}$ and ending with $\gamma=\nicefrac{10^{-4}}{L}$. Right: the empirical distribution of $\sigmass$ for $500,000$ sampled permutations with $\gamma=\nicefrac{1}{L}$ and minibatch size 64.}
	\label{fig:variance}
\end{figure}

\clearpage
\section*{Broader Impact}

Our contribution is primarily theoretical. Moreover, we study methods that are already in use in practice, but are notoriously hard to analyze. We believe we have made a breakthrough in this area by developing new and remarkably simple proof techniques, leading to  sharp bounds. This, we hope, will inspire other researchers to apply and further develop our techniques to other contexts and algorithms. These applications may one day push the state of the art in practice for existing or new supervised machine learning applications, which may then have broader impacts. Besides this, we do not expect any direct or short term  societal consequences.

\begin{ack}
  Ahmed Khaled acknowledges internship support from the Optimization and Machine Learning Lab led by Peter Richt\'{a}rik at KAUST.
\end{ack}

\bibliographystyle{plainnat}
\bibliography{biblio}

\clearpage
\part*{Appendix}
\tableofcontents

\clearpage

\section{Additional experiment details}
\textbf{Objective properties.} To better correspond to the theoretical setting of our main result, we use $\ell_2$ regularization in every element of the finite-sum. To obtain minibatches for the RR, SO and IG we permute the dataset and then split it into $n=\lceil \frac{N}{\tau}\rceil$ groups of sizes $\tau, \dotsc, \tau, N-\tau \br{\lceil\frac{N}{\tau}\rceil - 1}$. In other words, the first $n-1$ groups are of size $\tau$ and the remaining samples go to the last group. For SO and IG, we split the data only once, and for RR, we do this at the beginning of each epoch. The permutation of samples used in IG is the one in which the datasets are stored online. The smoothness constant of the sum of logistic regression losses admits a closed form expression $L_f=\frac{1}{4N}\|A\|^2+\lambda$. The individual losses are $L_{\max}$-smooth with $L_{\max}=\max_{i=1,\dotsc, n} \| a_i\|^2+\lambda$. 

\textbf{Stepsizes.} For all methods in Figure~\ref{fig:conv_plots}, we keep the stepsize equal to $\frac{1}{L}$ for the first $k_0=\lfloor K/40 \rfloor$ iterations, where $K$ is the total number of stochastic steps. This is important to ensure that there is an exponential convergence before the methods reach their convergence neighborhoods~\citep{Stich2019b}. After the initial $k_0$ iterations, the stepsizes used for RR, SO and IG were chosen as $\gamma_{k}=\min\left\{\frac{1}{L}, \frac{3}{\mu \max\{1, k-k_0\}}\right\}$ and for SGD as $\gamma_{k}=\min\left\{\frac{1}{L}, \frac{2}{\mu \max\{1, k-k_0\}}\right\}$. Although these stepsizes for RR are commonly used in practice~\citep{Bottou2009}, we do not analyze them and leave decreasing-stepsize analysis for future work. We also note that although $L$ is generally not available, it can be estimated using empirically observed gradients \citep{malitsky2019adaptive}. For our experiments, we estimate $L$ of minibatches of size $\tau$ using the closed-form expressions from Proposition~3.8 in \citep{Gower2019} as $L\le \frac{n(\tau -1)}{\tau(n-1)}L_f + \frac{n-\tau}{\tau(n-1)}L_{\max}$. The confidence intervals in Figure~\ref{fig:conv_plots} are estimated using 20 random seeds.

For the experiments in Figure~\ref{fig:variance}, we estimate the expectation from~\eqref{eq:bregman-div-noise} with 20 permutations, which provides sufficiently stable estimates. In addition, we use $L=L_{\max}$ (instead of using the batch smoothness of \citep{Gower2019}) as the plots in this figure use different minibatch sizes and we want to isolate the effect of reducing the variance by minibatching from the effect of changing $L$.

\textbf{SGD implementation.} For SGD, we used two approaches to minibatching. In the first, we sampled $\tau$ indices from $\{1,\dotsc, N\}$ and used them to form the minibatch, where $N$ is the total number of data samples. In the second approach, we permuted the data once and then at each iteration, we only sampled one index $i$ and formed the minibatch from indices $i, (i+1) \mod N,\dotsc, (i+\tau-1) \mod N$. The latter approach is much more cash-friendly and runs significantly faster, while the iteration convergence was the same in our experiments. Thus, we used the latter option to produce the final plots.

For all plots and methods, we use zero initialization, $x_0=(0,\dotsc, 0)^\top \in \R^d$. We obtain the optimum, $x_*$, by running Nesterov's accelerated gradient method until it reaches machine precision. The plots in the right column in Figure~\ref{fig:variance} were obtained by initializing the methods at an intermediate iterate of Nesterov's method, and we found the average, best and worst results by sampling 1,000 permutations.

\newpage
\section{Basic facts and notation}

\subsection{Basic identities and  inequalities}

For any two vectors $a, b \in \R^d$, we have
\begin{equation}
    \label{eq:square-decompos}
    2 \ev{a, b} = \sqn{a} + \sqn{b} - \sqn{a - b}.
\end{equation}
As a consequence of \eqref{eq:square-decompos}, we get
\begin{equation}
  \label{eq:sqnorm-triangle-2}
  \sqn{a + b} \leq 2 \sqn{a} + 2 \sqn{b}.
\end{equation}

\paragraph{Convexity, strong convexity and smoothness.} A differentiable function $h:\R^d\to \R$ is called $\mu$-convex if for some $\mu \geq 0$ and for all $x, y \in \R^d$, we have
\begin{equation}
  \label{eq:mu-convexity}
  h(x) + \ev{\nabla h(x), y - x} + \frac{\mu}{2} \sqn{y - x} \leq h(y).
\end{equation}
If $h$ satisfies \eqref{eq:mu-convexity} with $\mu > 0$, then we say that $h$ is $\mu$-strongly convex, and if $\mu = 0$ then we say $h$ is convex. A differentiable function $h:\R^d\to \R$ is called $L$-smooth if for some $L\geq 0$ and for all $x, y \in \R^d$, we have
\begin{equation}
  \label{eq:nabla-Lip}
  \norm{\nabla h(x) - \nabla h(y)} \leq L \norm{x - y}.
\end{equation}
A useful consequence of $L$-smoothness is the inequality
\begin{equation}
  \label{eq:L-smoothness}
  h(x) \leq h(y) + \ev{\nabla h(y), x - y} + \frac{L}{2} \sqn{x - y},
\end{equation}
holding for all $x,y\in \R^d$. If $h$ is $L$-smooth and lower bounded by $h_\ast$, then
\begin{equation}
  \label{eq:grad-bound}
  \sqn{\nabla h(x)} \leq 2 L \br{h(x) - h_\ast}.
\end{equation}

For any convex and $L$-smooth function $h$ it holds
\begin{align}
	D_h(x, y)\ge \frac{1}{2L}\norm{\nabla h(x) - \nabla h(y)}^2. \label{eq:bregman_lower_smooth}
\end{align}

\paragraph{Jensen's inequality and consequences.} For a convex function $h: \R^d \to \R$ and any vectors $y_1,\dots,y_n \in \R^d$,  Jensen's inequality states that
\[
  \label{eq:jensen}
  h\br{\frac{1}{n} \sum_{i=1}^{n} y_i} \leq \frac{1}{n} \sum_{i=1}^{n} h(y_i).
\]
Applying this to the squared norm, $h(y) = \sqn{y}$, we get
\begin{equation}
  \label{eq:sqnorm-jensen}
  \sqn{ \frac{1}{n} \sum_{i=1}^{n} y_i } \leq \frac{1}{n} \sum_{i=1}^{n} \sqn{y_i}.
\end{equation}
After multiplying  both sides of \eqref{eq:sqnorm-jensen} by $n^2$, we get
\begin{equation}
  \label{eq:sqnorm-sum-bound}
  \sqn{\sum_{i=1}^{n} y_i} \leq n \sum_{i=1}^{n} \sqn{y_i}.
\end{equation}

\paragraph{Variance decomposition.}
We will use the following decomposition that holds for any random variable $X$ with $\ec{\norm{X}^2}<+\infty$,
\begin{align}
	\ec{\norm{X}^2}=\norm{\ec{X}}^2 + \ec{\norm{X-\ec{X}}^2}. \label{eq:rv_moments}
\end{align}
We will make use of the particularization of \eqref{eq:rv_moments} to the discrete case: let $y_{1}, \ldots, y_{m} \in \R^d$ be given vectors and let $\bar{y} = \frac{1}{m} \sum_{i=1}^{m} y_i$ be their average. Then,
\begin{equation}
  \label{eq:variance-decomp}
  \frac{1}{m} \sum_{i=1}^{m} \sqn{y_{i}} = \sqn{ \bar{y} } + \frac{1}{m} \sum_{i=1}^{m} \sqn{y_i - \bar{y}}.
\end{equation}

\subsection{Notation}
We define the epoch total gradient $g_t$ as
\[ g_t \eqdef \sum_{i=0}^{n-1} \nabla f_{\prm{i}} (x_t^i). \]
We define the variance of the local gradients from their average at a point $x_t$ as
\[ \sigma_{t}^2 \eqdef \frac{1}{n} \sum_{j=1}^{n} \sqn{\nabla f_{j} (x_t) - \nabla f(x_t)}. \]
By $\et{\cdot}$ we denote the expectation conditional on all information prior to iteration $t$, including $x_t$. To avoid issues with the special case $n=1$, we use the convention $0/0=0$.
A summary of the notation used in this work is given in Table~\ref{tab:notation}.

\begin{table}[t]
  \centering
  \caption{Summary of key notation used in the paper.}
  \label{tab:notation}
  \begin{tabular}{@{}cl@{}}
  \toprule
  Symbol               & Description                                                                                                                                    \\ \midrule
  $x_t$                & The iterate used at the start of epoch $t$.                                                                                                    \\ \midrule
  $\pi$ &
    \begin{tabular}[l]{@{}l@{}}A permutation $\pi = \br{ \prm{0}, \prm{1}, \ldots, \prm{n-1} }$ of $\{ 1, 2, \ldots, n \}$. \\ Fixed for Shuffle-Once and resampled every epoch for Random Reshuffling.\end{tabular} \\ \midrule
  $\gamma$             & The stepsize used when taking descent steps in an epoch.                                                                                       \\ \midrule
  $x_t^i$              & The current iterate after $i$ steps in epoch $t$, for $0 \leq i \leq n$.                                                                       \\ \midrule
  $g_t$                & The sum of gradients used over epoch $t$ such that $x_{t+1}=x_t-\gamma g_t$.                                                                                                      \\ \midrule
  $\sigma_{t}^2$       & The variance of the individual loss gradients from the average loss at point $x_t$.                                                            \\ \midrule
  $A, B$ & Assumption~\ref{asm:2nd-moment} constants.                                                                                                  \\ \midrule
  $L$                  & The smoothness constant of $f$ and $f_{1}, f_{2}, \ldots, f_{n}$.                                                                              \\ \midrule
  $\mu$              & The strong convexity constant (for strongly convex objectives).                                                                                                                   \\ \midrule
  $\kappa$           & The condition number $\kappa \eqdef L/\mu$ for strongly convex objectives. \\ \midrule
  $\delta_{t}$         & \begin{tabular}[c]{@{}c@{}}Functional suboptimality, $\delta_{t} = f(x_t) - f_\ast$, where $f_\ast= \inf_{x} f(x)$.\end{tabular} \\ \midrule
  $r_{t}$              & \begin{tabular}[c]{@{}c@{}}The squared iterate distance from an optimum for convex losses \\ $r_{t} = \sqn{x_t - x_\ast}$.\end{tabular}       \\ \midrule
  $\et{\cdot}$              & \begin{tabular}[c]{@{}c@{}}Expectation conditional on the history of the algorithm prior to timestep $t$,\\ including $x_{t}$.\end{tabular}    \\ \bottomrule
  \end{tabular}
\end{table}

\clearpage
\section{A lemma for sampling without replacement}
The following algorithm-independent lemma characterizes the variance of sampling a number of vectors from a finite set of vectors, without replacement. It is a key ingredient in our results on the convergence of the RR and SO methods.

\begin{lemma}\label{lem:sampling_wo_replacement}
	Let $X_1,\dotsc, X_n\in \R^d$ be fixed vectors, $\overline X\eqdef \frac{1}{n}\sum_{i=1}^n X_i$ be their average and $\sigma^2 \eqdef \frac{1}{n}\sum_{i=1}^n \norm{X_i-\overline X}^2$ be the population variance. Fix any $k\in\{1,\dotsc, n\}$, let $X_{\pi_1}, \dotsc X_{\pi_k}$ be sampled uniformly without replacement from $\{X_1,\dotsc, X_n\}$ and $\overline X_\pi$ be their average. Then, the sample average and variance are given by
	\begin{align}
		\ec{\overline X_\pi}=\overline X, && \ec{\norm{\overline X_{\pi} - \overline X}^2}= \frac{n-k}{k(n-1)}\sigma^2. \label{eq:sampling_wo_replacement}
	\end{align}
\end{lemma}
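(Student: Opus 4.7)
The plan is to prove the two identities separately and to reduce everything to a short combinatorial calculation on cross terms. Throughout, I will write $Y_i \eqdef X_{\pi_i} - \overline X$ so that $\overline X_\pi - \overline X = \frac{1}{k}\sum_{i=1}^k Y_i$, which lets me treat both claims uniformly.

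For the mean, I would first observe that the marginal distribution of each $\pi_i$ under uniform sampling without replacement is uniform on $\{1,\dotsc,n\}$. Hence $\ec{X_{\pi_i}} = \frac{1}{n}\sum_{a=1}^n X_a = \overline X$ for every $i$, and linearity of expectation gives $\ec{\overline X_\pi} = \overline X$.

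For the variance, I would expand
\[
\ec{\Bigl\|\sum_{i=1}^k Y_i\Bigr\|^2}
= \sum_{i=1}^k \ec{\|Y_i\|^2} + \sum_{i\ne j}\ec{\ev{Y_i,Y_j}}.
\]
The diagonal terms are immediate: by the same marginal uniformity as above, $\ec{\|Y_i\|^2} = \frac{1}{n}\sum_{a=1}^n \|X_a-\overline X\|^2 = \sigma^2$. The real work is the cross terms. For $i\ne j$, the joint law of $(\pi_i,\pi_j)$ is uniform on ordered pairs of distinct indices, so
\[
\ec{\ev{Y_i,Y_j}}
= \frac{1}{n(n-1)}\sum_{a\ne b}\ev{X_a-\overline X,\, X_b-\overline X}.
\]
To evaluate this double sum I would use the centering identity $\sum_{a=1}^n (X_a - \overline X) = 0$. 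Squaring gives
\[
0 = \Bigl\|\sum_{a=1}^n (X_a - \overline X)\Bigr\|^2 = \sum_{a=1}^n \|X_a-\overline X\|^2 + \sum_{a\ne b}\ev{X_a-\overline X,\,X_b-\overline X},
\]
so the off-diagonal double sum equals $-n\sigma^2$, and $\ec{\ev{Y_i,Y_j}} = -\sigma^2/(n-1)$.

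Assembling the pieces, $\ec{\|\sum_i Y_i\|^2} = k\sigma^2 - k(k-1)\sigma^2/(n-1) = k(n-k)\sigma^2/(n-1)$, and dividing by $k^2$ yields the claimed formula $\ec{\|\overline X_\pi - \overline X\|^2} = \frac{n-k}{k(n-1)}\sigma^2$. The only non-routine step is recognizing that centering forces the cross-term sum to be computable in closed form; once that is in hand, the rest is arithmetic. Two small edge cases worth noting: when $k=n$, the factor $(n-k)$ vanishes, correctly recovering the deterministic identity $\overline X_\pi = \overline X$; and the convention $0/0=0$ handles $n=1$, where $\sigma^2=0$ and the statement is trivial.
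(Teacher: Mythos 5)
Your proof is correct and follows essentially the same route as the paper's: both compute the cross-term expectation $\ec{\ev{X_{\pi_i}-\overline X, X_{\pi_j}-\overline X}} = -\sigma^2/(n-1)$ by exploiting the centering identity $\sum_a (X_a-\overline X)=0$, and then assemble the diagonal and off-diagonal contributions. The only difference is cosmetic (the paper phrases the cross term as a covariance identity before expanding the square), so nothing further is needed.
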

\begin{proof}
	The first claim follows by linearity of expectation and uniformity of  sampling:
	\[
		\ec{\overline X_\pi} 
		= \frac{1}{k}\sum_{i=1}^k \ec{X_{\pi_i}}
		= \frac{1}{k}\sum_{i=1}^k \overline X
		= \overline X.
	\]
	To prove the second claim, let us first establish that the identity $\mathrm{cov}(X_{\pi_i}, X_{\pi_j})=-\frac{\sigma^2}{n-1}$ holds  for any $i\neq j$. Indeed,
	\begin{align*}
		\mathrm{cov}(X_{\pi_i}, X_{\pi_j})
		&= \ec{ \ev{X_{\pi_i} - \overline X, X_{\pi_j} - \overline X}}
		= \frac{1}{n(n-1)}\sum_{l=1}^n\sum_{m=1,m\neq l}^n\ev{X_l - \overline X, X_m - \overline X} \\
		&= \frac{1}{n(n-1)}\sum_{l=1}^n\sum_{m=1}^n\ev{X_l - \overline X, X_m - \overline X} - \frac{1}{n(n-1)}\sum_{l=1}^n \norm{X_l - \overline X}^2 \\
		&= \frac{1}{n(n-1)}\sum_{l=1}^n \ev{X_l - \overline X, \sum_{m=1}^n(X_m - \overline X)} - \frac{\sigma^2}{n-1} \\
		&=-\frac{\sigma^2}{n-1}.
	\end{align*}
This identity helps us to establish the formula for sample variance:
	\begin{align*}
		\ecn{\overline X_{\pi} - \overline X}
		&= \frac{1}{k^2} \sum_{i=1}^k\sum_{j=1}^k \mathrm{cov}(X_{\pi_i}, X_{\pi_j}) \\
    &= \frac{1}{k^2}\ec{\sum_{i=1}^k \sqn{X_{\pi_i} - \overline X}} + \sum_{i=1}^k\sum_{j=1,j\neq i}^{k} \mathrm{cov}(X_{\pi_i}, X_{\pi_j})  \\
		&=\frac{1}{k^2}\left(k\sigma^2 - k(k-1)\frac{\sigma^2}{n-1}\right)
    = \frac{n-k}{k(n-1)}\sigma^2.
    \qedhere
	\end{align*}
\end{proof}

\clearpage
\section{Proofs for convex objectives (Sections~\ref{sec:strongly-convex} and~\ref{sec:weakly-convex})}

\subsection{Proof of Proposition~\ref{prop:shuffling-variance-normal-variance-bound}}
\begin{proof}
  Let us start with the upper bound. Fixing any $i$ such that $1 \leq i \leq n-1$, we have $i(n-i)\le \frac{n^2}{4}\le \frac{n(n-1)}{2}$ and using smoothness and Lemma~\ref{lem:sampling_wo_replacement} leads to
	\begin{eqnarray*}
		\ec{D_{f_{\pi_i}}(x_*^i, x_*)}
		\overset{\eqref{eq:L-smoothness}}{\le}  \frac{L}{2}\ec{\|x_*^i-x_*\|^2} 
		&=& \frac{L}{2}\ec{\sqn{\sum_{k=0}^{i-1}\gamma \nabla f_{\pi_k}(x_*)}} \\
		&\overset{\eqref{eq:sampling_wo_replacement}}{=} & \frac{\gamma^2 L i(n-i)}{2(n-1)}\sigma_*^2 \\
    &\le & \frac{\gamma^2 L n}{4}\sigma_*^2.
  \end{eqnarray*}
  To obtain the upper bound, it remains to take  maximum with respect to $i$ on both sides and divide by $\gamma$.  To prove the lower bound, we use strong convexity and the fact that  $\max_i i(n-i)\ge \frac{n(n-1)}{4}$ holds for any integer $n$. Together, this leads to
  \[
  	\max_i\ec{D_{f_{\pi_i}}(x_*^i, x_*)}
		\overset{\eqref{eq:mu-convexity}}{\ge} \max_i\frac{\mu}{2}\ec{\|x_*^i-x_*\|^2} 
    = \max_i\frac{\gamma^2 \mu i(n-i)}{2(n-1)}\sigma_*^2 \ge \frac{\gamma^2 \mu n}{8}\sigma_*^2,
  \]
  as desired.    
\end{proof}

\subsection{Proof Remainder for Theorem~\ref{thm:all-sc-rr-conv}}
\label{sec:proof-of-thm-1}
\begin{proof}
  We start from \eqref{eq:thm_str_cvx_main-proof-1} proved in the main text:
	\[
		\ec{\|x_t^{i+1}-x_*^{i+1}\|^2}
		\le (1-\gamma\mu)\ec{\|x_t^i-x_*^i\|^2}+ 2 \gamma^2 \sigmass.
	\]
	Since $x_{t+1}-x_*=x_t^n-x_*^n$ and $x_t-x_*=x_t^0-x_*^0$, we can unroll the recursion, obtaining the epoch level recursion
  \[
    \ecn{x_{t+1} - x_\ast} \leq \br{1 - \gamma \mu}^{n} \ecn{x_t - x_\ast} + 2 \gamma^2 \sigmass \br{ \sum_{i=0}^{n-1} \br{1 - \gamma \mu}^{i} }.
  \]
  Unrolling this recursion across $T$ epochs, we obtain
  \begin{equation}
    \label{eq:thm_str_cvx_main-proof-2}
    \ecn{x_{T} - x_\ast} \leq \br{1 - \gamma \mu}^{nT} \sqn{x_0 - x_\ast} + 2 \gamma^2 \sigmass \br{\sum_{i=0}^{n-1} \br{1 - \gamma \mu}^{i}} \Biggl( \sum_{j=0}^{T-1} \br{1 - \gamma \mu}^{nj} \Biggr).
  \end{equation}
 The product of the two sums in \eqref{eq:thm_str_cvx_main-proof-2} can be bounded by  reparameterizing  the summation as follows:
  \begin{align*}
    \br{ \sum_{j=0}^{T-1} \br{1 - \gamma \mu}^{nj} } \br{\sum_{i=0}^{n-1} \br{1 - \gamma \mu}^{i}} &= \sum_{j=0}^{T-1} \sum_{i=0}^{n-1} \br{1 - \gamma \mu}^{nj + i} \\
    &= \sum_{k=0}^{nT-1} \br{1 - \gamma \mu}^{k} \leq \sum_{k=0}^{\infty} \br{1 - \gamma \mu}^{k} = \frac{1}{\gamma \mu}.
  \end{align*}
  Plugging this bound back into \eqref{eq:thm_str_cvx_main-proof-2}, we finally obtain the bound
  \[
    \ecn{x_{T} - x_\ast} \leq \br{1 - \gamma \mu}^{nT} \sqn{x_0 - x_\ast} + 2\gamma\frac{\sigmass}{\mu}.
  \]
\end{proof}

\subsection{Proof of complexity}
In this subsection, we show how we get from Theorem~\ref{thm:all-sc-rr-conv} the complexity for strongly convex functions.
\begin{corollary}
  \label{corr:all-sc-rr-conv}
  Under the same conditions as those in Theorem~\ref{thm:all-sc-rr-conv}, we choose stepsize 
  \[ \gamma = \min \pbr{ \frac{1}{L}, \frac{2}{\mu n T} \log\br{ \frac{\norm{x_0 - x_\ast} \mu T \sqrt{n}}{\sqrt{\kappa} \sigmaesc} } }. \]
The final iterate $x_T$ then satisfies
  \[ \ecn{x_{T} - x_\ast} = \mathcal{\tilde{O}} \br{ \exp\left( - \frac{\mu n T}{L}\right) \sqn{x_0 - x_\ast} + \frac{\kappa \sigmaesc^2}{\mu^2 n T^2} }, \]
  where $\tilde{O}(\cdot)$ denotes ignoring absolute constants and polylogarithmic factors. Thus, in order to obtain error (in squared distance to the optimum) less than $\e$, we require that the total number of iterations $n T$ satisfies
  \[ n T = \tilde{\Omega}\br{ \kappa + \frac{\sqrt{\kappa n} \sigmaesc}{\mu \sqrt{\e}} }. \]
\end{corollary}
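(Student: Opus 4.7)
\textbf{Proof plan for Corollary~\ref{corr:all-sc-rr-conv}.} The plan is to start from the bound of Theorem~\ref{thm:all-sc-rr-conv}, replace $\sigmass$ by its upper bound from Proposition~\ref{prop:shuffling-variance-normal-variance-bound}, and then optimize the stepsize using the standard ``switching'' trick (as used, e.g., by Stich). Substituting $\sigmass \le \gamma L n \sigmaesc^2/4$ into Theorem~\ref{thm:all-sc-rr-conv} and using the inequality $1-\gamma\mu \le \exp(-\gamma\mu)$ yields
\[
\ecn{x_T - x_\ast} \le \exp(-\gamma\mu n T)\sqn{x_0 - x_\ast} + \frac{\gamma^2 L n \sigmaesc^2}{2\mu}.
\]
So the task reduces to picking $\gamma \le 1/L$ that balances the two terms as a function of $T$.

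Next, I would split into two cases based on which branch of the minimum defining $\gamma$ is active. In the regime where $\frac{2}{\mu n T}\log\bigl(\norm{x_0-x_\ast}\mu T\sqrt{n}/(\sqrt{\kappa}\sigmaesc)\bigr) \le 1/L$, the logarithmic stepsize is used; plugging it in, the exponential factor becomes $\exp(-2\log(\cdot)) = \kappa \sigmaesc^2/(\mu^2 n T^2 \sqn{x_0-x_\ast})$, so the first term collapses to $\kappa\sigmaesc^2/(\mu^2 n T^2)$, and a direct calculation shows the second term equals $2\kappa\sigmaesc^2 \log^2(\cdot)/(\mu^2 n T^2)$. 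Both are of the desired order $\tilde{\mathcal{O}}(\kappa \sigmaesc^2/(\mu^2 n T^2))$. In the other regime $\gamma=1/L$, the defining inequality forces $nT = \tilde{\mathcal{O}}(\kappa)$, so the exponential term $\exp(-\mu n T/L)\sqn{x_0-x_\ast}$ already captures the decay, while the noise term $\gamma^2 L n \sigmaesc^2/(2\mu)$ can be absorbed into $\tilde{\mathcal{O}}(\kappa\sigmaesc^2/(\mu^2 n T^2))$ by using $nT \le 2\kappa\log(\cdot)$.

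Finally, to read off the iteration complexity, I would require each term in the resulting bound to be at most $\e/2$. The exponential term is at most $\e/2$ as soon as $nT \gtrsim \kappa \log(\sqn{x_0-x_\ast}/\e)$, and the variance term is at most $\e/2$ as soon as $T^2 \gtrsim \kappa \sigmaesc^2/(\mu^2 n \e)$, i.e.\ $nT \gtrsim \sqrt{\kappa n}\,\sigmaesc/(\mu\sqrt{\e})$. Adding these two requirements gives the claimed $nT = \tilde{\Omega}\bigl(\kappa + \sqrt{\kappa n}\,\sigmaesc/(\mu\sqrt{\e})\bigr)$.

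The steps are all routine; the only mildly delicate point is verifying that on the $\gamma=1/L$ branch the noise term is indeed dominated by the target $\tilde{\mathcal{O}}(\kappa\sigmaesc^2/(\mu^2 n T^2))$ bound, which is where the logarithmic factor hidden in the $\tilde{\mathcal{O}}$ enters through the constraint $nT \le 2\kappa\log(\cdot)$. Beyond that, no new ideas are required, and the corollary follows purely from algebraic manipulation of Theorem~\ref{thm:all-sc-rr-conv} and Proposition~\ref{prop:shuffling-variance-normal-variance-bound}.
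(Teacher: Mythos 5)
Your proposal is correct and follows essentially the same route as the paper's proof: substitute the upper bound on $\sigmass$ from Proposition~\ref{prop:shuffling-variance-normal-variance-bound} into Theorem~\ref{thm:all-sc-rr-conv}, split into the two cases of the stepsize minimum, and in the $\gamma = 1/L$ case use the defining inequality $\frac{1}{L} \leq \frac{2}{\mu n T}\log(\cdot)$ to absorb the noise term into $\tilde{\mathcal{O}}\bigl(\kappa\sigmaesc^2/(\mu^2 n T^2)\bigr)$. The final complexity read-off, which the paper only states without detail, is also carried out correctly.
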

\begin{proof}
  Applying \Cref{thm:all-sc-rr-conv}, the final iterate generated by Algorithms~\ref{alg:rr} or~\ref{alg:so} after $T$ epochs satisfies
  \[ \ecn{x_{T} - x_\ast} \leq \br{1 - \gamma \mu}^{n T} \sqn{x_0 - x_\ast} + 2\gamma\frac{\sigmass}{\mu}. \]
  Using \Cref{prop:shuffling-variance-normal-variance-bound} to bound $\sigmass$, we get
  \begin{equation}
    \label{eq:corr-all-sc-proof-1}
    \ecn{x_T - x_\ast} \leq \br{1 - \gamma \mu}^{n T} \sqn{x_0 - x_\ast} + \gamma^2 \kappa n \sigmaesc^2.
  \end{equation}
  We now have two cases: 
  \begin{itemize}[leftmargin=0.15in,itemsep=0.01in]
    \item \textbf{Case 1}: If $\frac{1}{L} \leq \frac{2}{\mu n T} \log\br{\frac{\norm{x_0 - x_\ast} \mu T \sqrt{n}}{\sqrt{\kappa} \sigmaesc} }$, then using $\gamma = \frac{1}{L}$ in \eqref{eq:corr-all-sc-proof-1} we have
    \begin{align*}
      \ecn{x_T - x_\ast} &\leq \br{1 - \frac{\mu}{L}}^{n T} \sqn{x_0 - x_\ast} + \frac{\kappa n \sigmaesc^2}{L^2} \\
      &\leq \br{1 - \frac{\mu}{L}}^{n T} \sqn{x_0 - x_\ast} + \frac{4 \kappa \sigmaesc^2}{\mu^2 n T^2} \log^2 \br{ \frac{\norm{x_0 - x_\ast} \mu T \sqrt{n}}{\sqrt{\kappa} \sigmaesc} }.
    \end{align*}
    Using that $1 - x \leq \exp(-x)$ in the previous inequality, we get
    \begin{equation}
      \label{eq:corr-all-sc-proof-2}
      \ecn{x_T - x_\ast} = \ctO\br{ \exp\br{- \frac{\mu n T}{L}} \sqn{x_0 - x_\ast} + \frac{\kappa \sigmaesc^2}{\mu^2 n T^2} },
    \end{equation}
    where $\ctO(\cdot)$ denotes ignoring polylogarithmic factors and absolute (non-problem specific) constants.
    \item \textbf{Case 2}: If $\frac{2}{\mu n T} \log\br{\frac{\norm{x_0 - x_\ast} \mu T \sqrt{n}}{\sqrt{\kappa} \sigmaesc} } < \frac{1}{L}$, recall that by \Cref{thm:all-sc-rr-conv},
    \begin{equation}
      \label{eq:corr-all-sc-proof-3}
      \ecn{x_T - x_\ast} \leq \br{1 - \gamma \mu}^{n T} \sqn{x_0 - x_\ast} + \gamma^2 \kappa n \sigmaesc^2.
    \end{equation}
    Plugging in $\gamma = \frac{2}{\mu n T} \log\br{\frac{\norm{x_0 - x_\ast} \mu T \sqrt{n}}{\sqrt{\kappa} \sigmaesc} }$,  the first term in \eqref{eq:corr-all-sc-proof-3} satisfies
    \begin{align}
      \br{1 - \gamma \mu}^{n T} \sqn{x_0 - x_\ast} &\leq \exp\br{- \gamma \mu n T} \sqn{x_0 - x_\ast} \nonumber \\
      &= \exp\br{ - 2 \log \br{ \frac{\norm{x_0 - x_\ast} \mu T \sqrt{n}}{\sqrt{\kappa} \sigmaesc} } } \sqn{x_0 - x_\ast} \nonumber \\
      &= \frac{\kappa \sigmaesc^2}{\mu^2 n T^2}.
      \label{eq:corr-all-sc-proof-4}
    \end{align}
    Furthermore, the second term in \eqref{eq:corr-all-sc-proof-3} satisfies
    \begin{equation}
      \gamma^2 \kappa n \sigmaesc^2 = \frac{4 \kappa \sigmaesc^2}{\mu^2 n T^2} \log^2 \br{ \frac{\norm{x_0 - x_\ast} \mu T \sqrt{n}}{\sqrt{\kappa} \sigmaesc} }.
      \label{eq:corr-all-sc-proof-5}
    \end{equation}
Substituting~\eqref{eq:corr-all-sc-proof-4} and~\eqref{eq:corr-all-sc-proof-5} into~\eqref{eq:corr-all-sc-proof-3}, we get
    \begin{equation}
      \label{eq:corr-all-sc-proof-6}
      \ecn{x_T - x_\ast} = \ctO\br{\frac{\kappa \sigmaesc^2}{\mu^2 n T^2} }.
    \end{equation}
    This concludes the second case.
  \end{itemize}
  It remains to take the maximum of~\eqref{eq:corr-all-sc-proof-2} from the first case and~\eqref{eq:corr-all-sc-proof-6} from the second case.
\end{proof}

\subsection{Two lemmas for Theorems~\ref{thm:only-f-sc-rr-conv} and~\ref{thm:weakly-convex-f-rr-conv}}

In order to prove Theorems~\ref{thm:only-f-sc-rr-conv} and~\ref{thm:weakly-convex-f-rr-conv}, it will be useful to define the following quantity.

\begin{definition}
  \label{def:cVt}
  Let $x_t^0, x_t^1, \ldots, x_t^n$ be iterates generated by Algorithms~\ref{alg:rr} or~\ref{alg:so}. We define the forward per-epoch deviation over the $t$-th epoch $\cV_t$ as
  \begin{equation}
    \label{eq:cVt_def}
    \cV_t\eqdef \sum_{i=0}^{n-1} \norm{x_t^i - x_{t+1}}^2.
  \end{equation}
\end{definition}

We will now establish two lemmas. First, we will show that $\cV_t$ can be efficiently upper bounded using Bregman divergences and the variance at the optimum. Subsequently use this bound to establish the convergence of RR/SO.

\subsubsection{Bounding the forward per-epoch deviation}
\begin{lemma}
  \label{lem:convex_rr_deviation}
	Consider the iterates of Random Reshuffling (Algorithm~\ref{alg:rr}) or Shuffle-Once (Algorithm~\ref{alg:so}). If the functions $f_1, \ldots, f_n$ are convex and Assumption~\ref{asm:f-smoothness} is satisfied, then 
  \begin{equation}
    \label{eq:lma-cVt-bound}
		\ec{\cV_t}
		\le 4\gamma^2n^2 L\sum_{i=0}^{n-1}\ec{D_{f_{\pi_i}}(x_*, x_t^i)}
		+ \frac{1}{2}\gamma^2n^2\sigma_*^2,
  \end{equation}
  where $\cV_t$ is defined as in Definition~\ref{def:cVt}, and $\sigmaesc^2$ is the variance at the optimum given by $\sigmaesc^2 \eqdef \frac{1}{n} \sum_{i=1}^{n} \sqn{\nabla f_{i} (x_\ast)}$.
\end{lemma}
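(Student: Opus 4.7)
\textbf{Proof plan for Lemma~\ref{lem:convex_rr_deviation}.} The plan is to obtain a pointwise-in-$i$ bound on $\|x_t^i - x_{t+1}\|^2$, sum over $i$, take expectations, and then use Lemma~\ref{lem:sampling_wo_replacement} on the one term that cannot be controlled deterministically. First I would observe that telescoping the update inside the epoch gives
\[
x_t^i - x_{t+1} \;=\; \gamma \sum_{j=i}^{n-1} \nabla f_{\pi_j}(x_t^j),
\]
and then add and subtract $\nabla f_{\pi_j}(x_\ast)$ in each summand, so that by \eqref{eq:sqnorm-triangle-2},
\[
\|x_t^i - x_{t+1}\|^2 \;\le\; 2\gamma^2 \Bigl\|\sum_{j=i}^{n-1}\bigl[\nabla f_{\pi_j}(x_t^j) - \nabla f_{\pi_j}(x_\ast)\bigr]\Bigr\|^2 + 2\gamma^2 \Bigl\|\sum_{j=i}^{n-1}\nabla f_{\pi_j}(x_\ast)\Bigr\|^2.
\]

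For the first ``gradient difference'' term, I would apply \eqref{eq:sqnorm-sum-bound} to pull the square inside the sum, introducing a factor of $n-i \leq n$, and then use the Bregman lower bound \eqref{eq:bregman_lower_smooth} to convert each $\|\nabla f_{\pi_j}(x_t^j)-\nabla f_{\pi_j}(x_\ast)\|^2$ into $2L\,D_{f_{\pi_j}}(x_\ast, x_t^j)$. After summing over $i$ from $0$ to $n-1$ and swapping the order of summation, each Bregman term $D_{f_{\pi_j}}(x_\ast, x_t^j)$ is counted with weight $\tfrac{(j+1)(2n-j)}{2} \leq n^2$, producing the target $4\gamma^2 n^2 L \sum_{i=0}^{n-1}\ec{D_{f_{\pi_i}}(x_\ast, x_t^i)}$ term.

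For the second term, the crucial observation is that since $x_\ast$ is a minimizer of $f$, the full-epoch sum vanishes: $\sum_{j=0}^{n-1}\nabla f_{\pi_j}(x_\ast) = n\,\nabla f(x_\ast) = 0$, so the tail sum equals the negative of the head sum, $-\sum_{j=0}^{i-1}\nabla f_{\pi_j}(x_\ast)$. This is exactly the setting of Lemma~\ref{lem:sampling_wo_replacement} applied to the vectors $\{\nabla f_i(x_\ast)\}$ (which have zero mean and population variance $\sigma_\ast^2$) with sample size $k=i$, yielding
\[
\ec{\Bigl\|\sum_{j=i}^{n-1}\nabla f_{\pi_j}(x_\ast)\Bigr\|^2} \;=\; i^2 \cdot \frac{n-i}{i(n-1)}\sigma_\ast^2 \;=\; \frac{i(n-i)}{n-1}\sigma_\ast^2.
\]
Summing this over $i$ using $\sum_{i=0}^{n-1} i(n-i) = \tfrac{n(n-1)(n+1)}{6}$ gives $\tfrac{n(n+1)}{6}\sigma_\ast^2$, and after the prefactor $2\gamma^2$ this is bounded by $\tfrac{1}{2}\gamma^2 n^2 \sigma_\ast^2$ for $n \geq 2$ (with the $n=1$ case trivial since $\sigma_\ast^2=0$ there).

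The main obstacle I anticipate is the constant on the $\sigma_\ast^2$ term: a naive application of \eqref{eq:sqnorm-sum-bound} to the tail sum $\sum_{j=i}^{n-1}\nabla f_{\pi_j}(x_\ast)$ would yield an $\cO(n^3)\sigma_\ast^2$ bound rather than $\cO(n^2)$, which would be too loose to prove meaningful convergence in Theorems~\ref{thm:only-f-sc-rr-conv} and~\ref{thm:weakly-convex-f-rr-conv}. Exploiting sampling-without-replacement via Lemma~\ref{lem:sampling_wo_replacement}, together with the cancellation $\sum_j \nabla f_{\pi_j}(x_\ast)=0$, is what saves a factor of $n$ and is the real content of the lemma; the rest amounts to careful bookkeeping of the double sum.
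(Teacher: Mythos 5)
Your proposal is correct and follows essentially the same route as the paper's proof: the same add-and-subtract decomposition of the tail sum, Young's inequality, the Bregman lower bound for the gradient-difference term, and Lemma~\ref{lem:sampling_wo_replacement} for the $\sigma_\ast^2$ term (your head-sum cancellation gives the identical $\tfrac{i(n-i)}{n-1}\sigma_\ast^2$ that the paper obtains by applying the lemma directly to the tail sample of size $n-i$). The bookkeeping and final constants match.
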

\begin{proof}
	For any  fixed $k\in\{0,\dotsc, n-1\}$, by definition of $x_t^k$ and $x_{t+1}$ we get the decomposition
	\[
		x_t^k - x_{t+1}
		=\gamma\sum_{i=k}^{n-1}\nabla f_{\pi_i}(x_t^i)
		= \gamma\sum_{i=k}^{n-1}(\nabla f_{\pi_i}(x_t^i) - \nabla f_{\pi_i}(x_*))+\gamma\sum_{i=k}^{n-1}\nabla f_{\pi_i}(x_*).
	\]
	Applying Young's inequality to the sums above yields
	\begin{eqnarray*}
		\|x_t^k - x_{t+1}\|^2
		&\overset{\eqref{eq:sqnorm-triangle-2}}{\le}& 2\gamma^2\sqn{\sum_{i=k}^{n-1}(\nabla f_{\pi_i}(x_t^i) - \nabla f_{\pi_i}(x_*))} + 2\gamma^2\sqn{\sum_{i=k}^{n-1}\nabla f_{\pi_i}(x_*)} \\
		&\overset{\eqref{eq:sqnorm-sum-bound}}{\le}& 2\gamma^2n\sum_{i=k}^{n-1}\sqn{\nabla f_{\pi_i}(x_t^i) - \nabla f_{\pi_i}(x_*)} + 2\gamma^2\sqn{\sum_{i=k}^{n-1}\nabla f_{\pi_i}(x_*)}\\
		&\overset{\eqref{eq:bregman_lower_smooth}}{\le}& 4\gamma^2Ln\sum_{i=k}^{n-1}D_{f_{\pi_i}}(x_*, x_t^i) + 2\gamma^2\sqn{\sum_{i=k}^{n-1}\nabla f_{\pi_i}(x_*)} \\
		&\le& 4\gamma^2Ln\sum_{i=0}^{n-1}D_{f_{\pi_i}}(x_*, x_t^i) + 2\gamma^2\sqn{\sum_{i=k}^{n-1}\nabla f_{\pi_i}(x_*)} .
	\end{eqnarray*}
  Summing up and taking expectations leads to
  \begin{equation}
    \label{eq:cVt-bound-proof-1}
    \sum_{k=0}^{n-1} \ecn{x_t^k - x_{t+1}} \leq 4\gamma^2Ln^2\sum_{i=0}^{n-1}\ec{D_{f_{\pi_i}}(x_*, x_t^i)} + 2\gamma^2 \sum_{k=0}^{n-1} \ecn{\sum_{i=k}^{n-1}\nabla f_{\pi_i}(x_*)} .
  \end{equation}
  We now bound the second term in the right-hand side of \eqref{eq:cVt-bound-proof-1}. First,  using \Cref{lem:sampling_wo_replacement}, we get
	\begin{align*}
		\ecn{\sum_{i=k}^{n-1}\nabla f_{\pi_i}(x_*)}
		&= (n-k)^2\ecn{\frac{1}{n-k}\sum_{i=k}^{n-1}\nabla f_{\pi_i}(x_*)} \\
		&= (n-k)^2\frac{k}{(n-k)(n-1)}\sigma_*^2 \\
		&=\frac{k(n-k)}{n-1}\sigma_*^2.
	\end{align*}
	Next, by summing this for $k$ from 0 to $n-1$, we obtain
	\[
		\sum_{k=0}^{n-1} \ec{\sqn{\sum_{i=k}^{n-1}\nabla f_{\pi_i}(x_*)}} 
		= \sum_{k=0}^{n-1}\frac{k(n-k)}{n-1} \sigmaesc^2
		= \frac{1}{6}n(n+1) \sigmaesc^2 \le \frac{n^2 \sigmaesc^2}{4},
	\]
	where in the last step we also used $n\ge 2$. The result follows.
\end{proof}

\subsubsection{Finding a per-epoch recursion}
\begin{lemma}\label{lem:improved_convex}
	Assume that functions $f_1,\dotsc,f_n$ are  convex and that Assumption~\ref{asm:f-smoothness} is satisfied. If Random Reshuffling (Algorithm~\ref{alg:rr}) or Shuffle-Once (Algorithm~\ref{alg:so}) is run with a stepsize satisfying $\gamma\le \frac{1}{\sqrt{2}Ln}$, then
	\[
		\ec{\|x_{t+1}-x_*\|^2}
		\le \ec{\|x_t-x_*\|^2} - 2\gamma n\ec{f(x_{t+1})-f_*} + \frac{\gamma^3 L n^2\sigma_*^2}{2}.
	\]
\end{lemma}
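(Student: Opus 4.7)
The key insight is that $g_t = \gamma^{-1}(x_t - x_{t+1})$ should be viewed as an \emph{implicit} analogue of $n \nabla f(x_{t+1})$ rather than an explicit approximation to $n\nabla f(x_t)$. That is why the progress inequality features $f(x_{t+1})$ and carries no stepsize-squared penalty on $\|g_t\|^2$, which is exactly what one obtains for an implicit gradient step $x_{t+1} = x_t - \gamma \nabla f(x_{t+1})$. The plan is to exploit Lemma~\ref{lem:convex_rr_deviation} to control $\cV_t$, but its bound involves $\sum_i \ec{D_{f_{\pi_i}}(x_*, x_t^i)}$, which I need to dispose of via a three-point Bregman identity. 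Under the stepsize constraint $\gamma \le \frac{1}{\sqrt{2}Ln}$ one has $4\gamma^3 L^2 n^2 \le 2\gamma$, so multiplying Lemma~\ref{lem:convex_rr_deviation} by $\gamma L$ yields
\[ \gamma L \ec{\cV_t} \;\le\; 2\gamma \sum_{i=0}^{n-1}\ec{D_{f_{\pi_i}}(x_*, x_t^i)} \;+\; \frac{\gamma^3 L n^2 \sigma_*^2}{2}. \]

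Next I would rewrite $D_{f_{\pi_i}}(x_*, x_t^i)$ via the three-point identity
\[ D_h(x_*, y) \;=\; D_h(x_*, z) + D_h(z, y) + \langle \nabla h(y) - \nabla h(z),\, z - x_*\rangle \]
with $h = f_{\pi_i}$, $y = x_t^i$, $z = x_{t+1}$. Summing over $i$ and using the bijection identities $\sum_i f_{\pi_i}(\cdot) = n f(\cdot)$ and $\sum_i \nabla f_{\pi_i}(x_{t+1}) = n\nabla f(x_{t+1})$, together with $\sum_i \nabla f_{\pi_i}(x_t^i) = g_t$, the divergences at $x_{t+1}$ and the cross term collapse into
\[ \sum_{i=0}^{n-1} D_{f_{\pi_i}}(x_*, x_t^i) \;=\; -n\bigl(f(x_{t+1}) - f_*\bigr) + \langle g_t, x_{t+1} - x_*\rangle + \sum_{i=0}^{n-1} D_{f_{\pi_i}}(x_{t+1}, x_t^i). \]

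To finish, I would bound $\sum_i D_{f_{\pi_i}}(x_{t+1}, x_t^i) \le \frac{L}{2}\cV_t$ by smoothness and substitute the elementary identity $2\gamma\langle g_t, x_{t+1} - x_*\rangle = \|x_t - x_*\|^2 - \|x_{t+1} - x_*\|^2 - \gamma^2\|g_t\|^2$, which is immediate from $x_t - x_{t+1} = \gamma g_t$. Chaining the two displayed inequalities, the $\gamma L \cV_t$ contributions on the two sides should cancel exactly, the $-\gamma^2 \ec{\|g_t\|^2}$ term is nonpositive and can be dropped, and the claim will come out after rearrangement. The main obstacle I anticipate is choosing the three-point decomposition (and the pivot $z = x_{t+1}$ in particular) so that the residual Bregman divergence is controlled by $\cV_t$ with precisely the coefficient $L/2$: a looser decomposition would leave behind a stray multiple of $\cV_t$ that the stepsize constraint $\gamma \le \frac{1}{\sqrt{2}Ln}$ cannot absorb, and the cancellation would fail.
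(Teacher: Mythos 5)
Your proposal is correct and follows essentially the same route as the paper: it rests on the same ingredients (the forward-deviation bound of Lemma~\ref{lem:convex_rr_deviation}, the three-point Bregman decomposition pivoted at $x_{t+1}$, the smoothness bound $D_{f_{\pi_i}}(x_{t+1},x_t^i)\le\frac{L}{2}\|x_{t+1}-x_t^i\|^2$, and the expansion of $\|x_t-x_*\|^2$ around $x_{t+1}$), with the stepsize condition entering in the same way. The only difference is bookkeeping --- you pre-multiply the deviation lemma by $\gamma L$ and cancel $\gamma L\,\cV_t$ at the end, whereas the paper absorbs $\sum_i D_{f_{\pi_i}}(x_{t+1},x_t^i)$ into $(1-2\gamma^2L^2n^2)\sum_i D_{f_{\pi_i}}(x_*,x_t^i)\ge 0$ --- and the constants check out identically.
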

\begin{proof}
  Define the sum of gradients used in the $t$-th epoch as $g_{t} \eqdef \sum_{i=0}^{n-1} \nabla f_{\pi_{i}} (x_t^i)$. We will use $g_t$ to relate the iterates $x_t$ and $x_{t+1}$. By definition of $x_{t+1}$, we can write
  \[ x_{t+1} = x_t^{n} = x_t^{n-1} - \gamma \nabla f_{\pi_{n-1}} (x_t^{n-1}) = \cdots = x_t^{0} - \gamma \sum_{i=0}^{n-1} \nabla f_{\pi_{i}} (x_t^i).  \]
Further, since $x_t^0 = x_t$, we see that $x_{t+1} = x_t - \gamma g_t$, which leads to
	\begin{align*}
		\|x_{t}-x_*\|^2
		=\|x_{t+1}+\gamma g_t-x_*\|^2
		&=\|x_{t+1}-x_*\|^2+2\gamma\ev{g_t, x_{t+1}-x_*}+\gamma^2\|g_t\|^2 \\
		&\ge \|x_{t+1}-x_*\|^2+2\gamma\ev{g_t, x_{t+1}-x_*} \\
		&= \|x_{t+1}-x_*\|^2+2\gamma\sum_{i=0}^{n-1}\ev{\nabla f_{\pi_i}(x_t^i), x_{t+1}-x_*}.
	\end{align*}
	Observe that for any $i$, we have the following decomposition
	\begin{align}
		\ev{\nabla f_{\pi_i}(x_t^i), x_{t+1}-x_*}
		&= [f_{\pi_i}(x_{t+1})-f_{\pi_i}(x_*)]+ [f_{\pi_i}(x_*) - f_{\pi_i}(x_t^i) - \<\nabla f_{\pi_i}(x_t^i), x_{t}^i-x_*>] \notag\\
		& \quad - [f_{\pi_i}(x_{t+1}) - f_{\pi_i}(x_t^i)-\<\nabla f_{\pi_i}(x_t^i), x_{t+1}-x_t^i>] \notag\\
		&= [f_{\pi_i}(x_{t+1})-f_{\pi_i}(x_*)] + D_{f_{\pi_i}}(x_*, x_t^i) - D_{f_{\pi_i}}(x_{t+1}, x_t^i). \label{eq:rr_decomposition}
	\end{align}
	Summing the first quantity in~\eqref{eq:rr_decomposition} over $i$ from $0$ to $n-1$ gives
	\[
		\sum_{i=0}^{n-1}[f_{\pi_i}(x_{t+1})-f_{\pi_i}(x_*)]
		= n(f(x_{t+1})-f_*).
	\]
	Now, we can bound the third term in the decomposition~\eqref{eq:rr_decomposition} using $L$-smoothness as follows:
	\[
		D_{f_{\pi_i}}(x_{t+1}, x_t^i)
		\le \frac{L}{2}\|x_{t+1}-x_t^i\|^2.
	\]
	By summing the right-hand side over $i$ from $0$ to $n-1$ we get the forward deviation over an epoch $\cV_t$, which we bound by Lemma~\ref{lem:convex_rr_deviation} to get
	\[
		\sum_{i=0}^{n-1}\ec{D_{f_{\pi_i}}(x_{t+1}, x_t^i)}
    \overset{\eqref{eq:cVt_def}}{\leq} \frac{L}{2} \ec{\cV_t} 
		\overset{\eqref{eq:lma-cVt-bound}}{\le}  2\gamma^2 L^2n^2\sum_{i=0}^{n-1}\ec{D_{f_{\pi_i}}(x_*, x_t^i)}
		+ \frac{\gamma^2 L n^2\sigma_*^2}{4}.
	\]
	Therefore, we can lower-bound the sum of the second and the third term in~\eqref{eq:rr_decomposition} as
	\begin{align*}
		\sum_{i=0}^{n-1}\ec{D_{f_{\pi_i}}(x_*, x_t^i) - D_{f_{\pi_i}}(x_{t+1}, x_t^i)}
		&\ge \sum_{i=0}^{n-1}\ec{D_{f_{\pi_i}}(x_*, x_t^i)}  - 2\gamma^2L^2n^2\sum_{i=0}^{n-1}\ec{D_{f_{\pi_i}}(x_*, x_t^i)}\\
		&\qquad - \frac{\gamma^2 L n^2\sigma_*^2}{4} \\
		&\ge (1-2\gamma^2L^2n^2)\sum_{i=0}^{n-1}\ec{D_{f_{\pi_i}}(x_*, x_t^i)} - \frac{\gamma^2 L n^2\sigma_*^2}{4} \\
		&\ge  - \frac{\gamma^2 L n^2\sigma_*^2}{4},
	\end{align*}
	where in the third inequality we used that $\gamma \leq \frac{1}{\sqrt{2} L n}$ and that $D_{f_{\pi_{i}}} (x_\ast, x_t^i)$ is nonnegative. Plugging this back into the lower-bound on $\|x_t-x_*\|^2$ yields
	\[
		\ecn{x_t - x_\ast}
		\ge \ec{\|x_{t+1}-x_*\|^2} + 2\gamma n\ec{f(x_{t+1})-f_*}- \frac{\gamma^3 L n^2\sigma_*^2}{2}.
	\]
	Rearranging the terms gives the result.
\end{proof}

\subsection{Proof of Theorem~\ref{thm:only-f-sc-rr-conv}}
\begin{proof}
	We can use \Cref{lem:improved_convex} and strong convexity to obtain
	\begin{align*}
		\ecn{x_{t+1}-x_*}
		&\le \ecn{x_t-x_*} - 2\gamma n\ec{f(x_{t+1})-f_*} + \frac{\gamma^3 L n^2\sigma_*^2}{2}\\
		&\overset{\eqref{eq:mu-convexity}}{\le}  \ecn{x_t-x_*} - \gamma n\mu\ecn{x_{t+1}-x_*} + \frac{\gamma^3 L n^2\sigma_*^2}{2},
	\end{align*}
	whence
	\begin{align*}
		\ecn{x_{t+1}-x_*}
    &\le\frac{1}{1+\gamma\mu n}\left( \ecn{x_t-x_*} + \frac{\gamma^3 L n^2\sigma_*^2}{2} \right) \\
    &= \frac{1}{1 + \gamma \mu n} \ecn{x_t - x_\ast} +  \frac{1}{1 + \gamma \mu n} \frac{\gamma^3 L n^2\sigma_*^2}{2} \\
		&\le \left(1 - \frac{\gamma\mu n}{2}\right) \ecn{x_t - x_\ast} + \frac{\gamma^3 L n^2 \sigmaesc^2}{2} .
  \end{align*}  
  Recursing for $T$ iterations, we get that the final iterate satisfies
  \begin{align*}
    \ecn{x_T - x_\ast} &\leq \br{ 1 - \frac{\gamma \mu n}{2} }^{T} \sqn{x_0 - x_\ast} + \frac{\gamma^3 L n^2 \sigmaesc^2 }{2} \br{ \sum_{j=0}^{T-1} \br{1 - \frac{\gamma \mu n}{2}}^{j} } \\
    &\leq \br{ 1 - \frac{\gamma \mu n}{2} }^{T} \sqn{x_0 - x_\ast} + \frac{\gamma^3 L n^2 \sigmaesc^2 }{2} \br{ \frac{2}{\gamma \mu n} } \\
    &= \br{ 1 - \frac{\gamma \mu n}{2} }^T \sqn{x_0 - x_\ast} + \gamma^2 \kappa n \sigmaesc^2.
    \qedhere
  \end{align*}
\end{proof}

\subsection{Proof of Theorem~\ref{thm:weakly-convex-f-rr-conv}}
\begin{proof}
  We start with \Cref{lem:improved_convex}, which states that the following inequality holds:
  \[ \ec{\sqn{x_{t+1} - x_\ast}} \leq \ecn{x_t - x_\ast} - 2 \gamma n \ec{f(x_{t+1}) - f (x_\ast)} + \frac{\gamma^3 L n^2 \sigmaesc^2}{2} . \]
  Rearranging the result leads to
  \[ 2 \gamma n \ec{f(x_{t+1}) - f(x_\ast)} \leq \ecn{x_{t} - x_\ast} - \ecn{x_{t+1} - x_\ast} +\frac{\gamma^3 L n^2 \sigmaesc^2}{2} .  \]
  Summing these inequalities for $t=0,1,\dots, T-1$ gives
  \begin{align*}
    2 \gamma n \sum_{t=0}^{T-1} \ec{f(x_{t+1}) - f(x_\ast)} &\leq \sum_{t=0}^{T-1} \br{ \ecn{x_t - x_\ast} - \ecn{x_{t+1} - x_\ast} } + \frac{\gamma^3 L n^2 \sigmaesc^2 T}{2}  \\
    &= \sqn{x_{0} - x_\ast} - \ecn{x_{T} - x_\ast} +\frac{\gamma^3 L n^2 \sigmaesc^2 T}{2} \\
    &\leq \sqn{x_0 - x_\ast} + \frac{\gamma^3 L n^2 \sigmaesc^2 T}{2} ,
  \end{align*}
  and dividing both sides by $2 \gamma n T$, we get
  \[
    \frac{1}{T} \sum_{t=0}^{T-1} \ec{f(x_{t+1}) - f(x_\ast)} \leq \frac{\sqn{x_0 - x_\ast}}{2 \gamma n T} + \frac{ \gamma^2 L  n \sigmaesc^2}{4}. 
  \]
  Finally, using convexity of $f$,  the average iterate $\hat{x}_{T} \eqdef \frac{1}{T} \sum_{t=1}^{T} x_t$ satisfies
	\[
		\ec{f(\hat{x}_T) - f(x_\ast)} \leq \frac{1}{T} \sum_{t=1}^{T} \ec{f(x_t)-f(x_\ast)}
    \le \frac{\sqn{x_0 - x_\ast}}{2 \gamma n T} + \frac{ \gamma^2 L  n \sigmaesc^2}{4}. 
    \qedhere
	\]
\end{proof}

\subsection{Proof of complexity}
\begin{corollary}
  \label{corr:weakly-convex-f}
  Under the same conditions as Theorem~\ref{thm:weakly-convex-f-rr-conv}, choose the stepsize 
  \[ \gamma = \min \pbr{ \frac{1}{\sqrt{2} L n}, \br{ \frac{\sqn{x_0 - x_\ast}}{L n^2 T \sigmaesc^2} }^{1/3} }. \]
  Then
  \[ \ec{f(\hat{x}_T) - f(x_\ast)} \leq \frac{L \sqn{x_0 - x_\ast}}{\sqrt{2} T} + \frac{3 L^{1/3} \norm{x_0 - x_\ast}^{4/3} \sigmaesc^{2/3}}{4 n^{1/3} T^{2/3}}. \]
We can guarantee $\ec{f(\hat{x}_T) - f(x_\ast)} \leq \e^2$ provided that the total number of iterations satisfies
  \[ Tn \geq  \frac{2\sqn{x_0 - x_\ast} \sqrt{Ln}}{\e^2} \max \pbr{ \sqrt{2 L n}, \frac{\sigmaesc}{\e} }. \]
\end{corollary}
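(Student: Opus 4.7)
The plan is to start from the bound furnished by Theorem~\ref{thm:weakly-convex-f-rr-conv},
\[ \ec{f(\hat{x}_T) - f(x_\ast)} \le \frac{R^2}{2 \gamma n T} + \frac{\gamma^2 L n \sigmaesc^2}{4}, \qquad R^2 \eqdef \sqn{x_0-x_\ast}, \]
valid for any $\gamma \le 1/(\sqrt{2}Ln)$, and then substitute the prescribed stepsize. Since that stepsize is a minimum of two expressions, I would split the argument into two cases according to which argument is active. The balance of the two terms in $\gamma$ has the optimal shape $\gamma \propto (R^2/(Ln^2 T \sigmaesc^2))^{1/3}$, with a ceiling $1/(\sqrt{2}Ln)$ imposed by the theorem's stepsize requirement.

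\textbf{Case A (interior stepsize).} When $\gamma = (R^2/(Ln^2T\sigmaesc^2))^{1/3}$, I would plug this value directly into both terms of the bound. A routine computation gives $R^2/(2\gamma nT) = L^{1/3} R^{4/3}\sigmaesc^{2/3}/(2 n^{1/3}T^{2/3})$ and $\gamma^2 L n \sigmaesc^2/4 = L^{1/3}R^{4/3}\sigmaesc^{2/3}/(4 n^{1/3}T^{2/3})$, which add up to $\tfrac{3}{4} L^{1/3} R^{4/3}\sigmaesc^{2/3}/(n^{1/3}T^{2/3})$, matching the second summand in the target bound; since the first summand $LR^2/(\sqrt{2}T)$ is nonnegative, the claim follows.

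\textbf{Case B (boundary stepsize).} When $\gamma = 1/(\sqrt{2}Ln)$, the first term evaluates to exactly $LR^2/(\sqrt{2}T)$, which is the first summand in the claim. For the second term $\gamma^2 Ln\sigmaesc^2/4 = \sigmaesc^2/(8Ln)$, the case hypothesis $1/(\sqrt{2}Ln) \le (R^2/(Ln^2T\sigmaesc^2))^{1/3}$ rearranges (on cubing) to $\sigmaesc^2 \le 2\sqrt{2}\,L^2 n R^2/T$. I would then split $\sigmaesc^2 = \sigmaesc^{4/3}\cdot\sigmaesc^{2/3}$ and apply this bound only to the $\sigmaesc^{4/3}$ factor, which yields $\sigmaesc^2/(8Ln) \le L^{1/3}R^{4/3}\sigmaesc^{2/3}/(4n^{1/3}T^{2/3})$, at most the second summand in the claim. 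This interpolation trick is the main obstacle, since a cruder bound of $\sigmaesc^2/(8Ln)$ by $LR^2/(4\sqrt{2}T)$ would only inflate the first summand, not align with the $T^{-2/3}$ shape of the claim.

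For the complexity assertion, I would require each summand in the just-derived bound to be at most $\e^2$ (absorbing the factor of $2$ into the constant in the final bound). The first inequality $LR^2/(\sqrt{2}T)\le \e^2$ gives $T\ge LR^2/(\sqrt{2}\,\e^2)$, which after multiplying by $n$ produces the branch $Tn \gtrsim R^2\sqrt{Ln}\cdot\sqrt{Ln}/\e^2$. The second inequality $\tfrac{3}{4}L^{1/3}R^{4/3}\sigmaesc^{2/3}/(n^{1/3}T^{2/3})\le \e^2$ inverts to $T\gtrsim L^{1/2}R^2\sigmaesc/(n^{1/2}\e^3)$, hence $Tn \gtrsim R^2\sqrt{Ln}\cdot \sigmaesc/\e^3$. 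Taking the larger of the two branches and absorbing numerical constants into the leading factor of $2$ produces precisely the stated lower bound $Tn \ge 2R^2\sqrt{Ln}\max\{\sqrt{2Ln},\sigmaesc/\e\}/\e^2$.
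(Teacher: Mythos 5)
Your proposal is correct and follows essentially the same route as the paper: a two-case analysis on which branch of the stepsize minimum is active, with the inactive branch's inequality used to control the term that does not simplify directly (your ``interpolation trick'' on $\sigmaesc^{4/3}$ in Case B is algebraically identical to the paper's step of bounding $\gamma^{2} \le \bigl(\sqn{x_0-x_\ast}/(Ln^{2}T\sigmaesc^{2})\bigr)^{2/3}$). The arithmetic in both cases and in the complexity translation checks out, so no gap.
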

\begin{proof}
  We start with the guarantee of Theorem~\ref{thm:weakly-convex-f-rr-conv}:
  \begin{equation}
    \label{eq:corr-wc-proof-1}
    \ec{f(\hat{x}_T) - f(x_\ast)} \leq \frac{\sqn{x_0 - x_\ast}}{2 \gamma n T} + \frac{\gamma^2 L n \sigmaesc^2}{4} .
  \end{equation}
  We now have two cases depending on the stepsize: 
  \begin{itemize}[leftmargin=0.15in,itemsep=0.01in]
    \item \textbf{Case 1}: If $\gamma = \frac{1}{\sqrt{2} L n} \leq \br{ \frac{\sqn{x_0 - x_\ast}}{L n^2 T \sigmaesc^2} }^{1/3}$, then plugging this $\gamma$ into \eqref{eq:corr-wc-proof-1} gives
    \begin{align}
      \ec{f(\hat{x}_T) - f(x_\ast)} &\leq \frac{L \sqn{x_0 - x_\ast}}{\sqrt{2} T} + \frac{\gamma^2 L n \sigmaesc^2 }{4}  \nonumber \\
      &\leq \frac{L \sqn{x_0 - x_\ast}}{\sqrt{2} T} +  \br{ \frac{\sqn{x_0 - x_\ast}}{L n^2 T \sigmaesc^2} }^{2/3} \frac{Ln \sigmaesc^2}{4}  \nonumber \\
      &= \frac{L \sqn{x_0 - x_\ast}}{\sqrt{2} T} + \frac{L^{1/3} \sigmaesc^{2/3} \norm{x_0 - x_\ast}^{4/3}}{4 n^{1/3} T^{2/3}}.
      \label{eq:corr-wc-proof-2}
    \end{align}
    \item \textbf{Case 2}: If $\gamma = \br{ \frac{\sqn{x_0 - x_\ast}}{L n^2 T \sigmaesc^2} }^{1/3} \leq \frac{1}{\sqrt{2} L n}$, then plugging this $\gamma$ into \eqref{eq:corr-wc-proof-1} gives
    \begin{align}
      \ec{f(\hat{x}_T) - f(x_\ast)} &\leq  \frac{L^{1/3} \norm{x_0 - x_\ast}^{4/3} \sigmaesc^{2/3}}{2 n^{1/3} T^{2/3}} + \frac{L^{1/3} \sigmaesc^{2/3} \norm{x_0 - x_\ast}^{4/3}}{4 n^{1/3} T^{2/3}} \nonumber \\
      &= \frac{3 L^{1/3} \norm{x_0 - x_\ast}^{4/3} \sigmaesc^{2/3}}{4 n^{1/3} T^{2/3}}.
      \label{eq:corr-wc-proof-3}
    \end{align}
  \end{itemize}
  Combining \eqref{eq:corr-wc-proof-2} and \eqref{eq:corr-wc-proof-3}, we see that in both cases we have
  \[ \ec{f(\hat{x}_T) - f(x_\ast)} \leq \frac{L \sqn{x_0 - x_\ast}}{\sqrt{2} T} + \frac{3 L^{1/3} \norm{x_0 - x_\ast}^{4/3} \sigmaesc^{2/3}}{4 n^{1/3} T^{2/3}}. \]
  Translating this to sample complexity, we can guarantee that $\ec{f(\hat{x}_T) - f(x_\ast)} \leq \e^2$ provided
  \[
    n T \geq \frac{2 \sqn{x_0 - x_\ast} \sqrt{L n}}{\e^2} \max \pbr{ \sqrt{L n}, \frac{\sigmaesc}{\e} }. 
    \qedhere
  \]
\end{proof}

\clearpage
\section{Proofs for non-convex objectives (Section~\ref{sec:non-convex})}

\subsection{Proof of Proposition~\ref{prop:2nd-moment-bound}}
\begin{proof}
  This proposition is a special case of Lemma~3 in \citep{Khaled2020} and we prove it here for completeness. Let $x \in \R^d$. We start with \eqref{eq:grad-bound} (which does not require convexity) applied to each $f_i$:
  \[ \sqn{\nabla f_{i} (x)} \leq 2 L \br{f_{i} (x) - f_i^\ast}. \]
  Averaging, we derive
  \begin{align*}
    \frac{1}{n} \sum_{i=1}^{n} \sqn{\nabla f_{i} (x)} &\leq 2 L \br{f(x) - \frac{1}{n} \sum_{i=1}^{n} f_i^\ast} \\
    &= 2 L \br{f(x) - f_\ast} + 2 L \br{f_\ast - \frac{1}{n} \sum_{i=1}^{n} f_i^\ast}.
  \end{align*}
  Note that because $f_\ast$ is the infimum of $f(\cdot)$ and $\frac{1}{n} \sum_{i=1}^{n} f_i^\ast$ is a lower bound on $f$ then $f_\ast - \frac{1}{n} \sum_{i=1}^{n} f_i^\ast \geq 0$. We may now use the variance decomposition
  \begin{eqnarray*}
    \frac{1}{n} \sum_{i=1}^{n} \sqn{\nabla f_{i} (x) - \nabla f(x)} &\overset{\eqref{eq:variance-decomp}}{=}& \frac{1}{n} \sum_{i=1}^{n} \sqn{\nabla f_i (x)} - \sqn{\nabla f(x)} \\
    &\leq& \frac{1}{n} \sum_{i=1}^{n} \sqn{\nabla f_{i} (x)} \\
    &\leq& 2 L \br{f(x) - f_\ast} + 2 L \br{f_\ast - \frac{1}{n} \sum_{i=1}^{n} f_i^\ast}.
  \end{eqnarray*}
  It follows that Assumption~\ref{asm:2nd-moment} holds with $A = L$ and $B^2 = 2 L \br{f_\ast - \frac{1}{n} \sum_{i=1}^{n} f_i^\ast}$.
\end{proof}

\subsection{Finding a per-epoch recursion}
For this subsection and the rest of this section, we need to define the following quantity:
\begin{definition}
  \label{def:vt}
  For Algorithm~\ref{alg:rr} we define the \emph{backward per-epoch deviation} at timestep $t$ by
  \[ V_{t} \eqdef \frac{1}{n} \sum_{i=1}^{n} \sqn{x_t^i - x_t}. \]
\end{definition}
We will study the convergence of Algorithm~\ref{alg:rr} for non-convex objectives as follows: we first derive a per-epoch recursion that involves $V_t$ in Lemma~\ref{lemma:epoch-recursion-non-convex}, then we show that $V_t$ can be bounded using smoothness and probability theory in Lemma~\ref{lemma:vt_rr}, and finally combine these two to prove Theorem~\ref{thm:rr-nonconvex}.

\begin{lemma}
  \label{lemma:epoch-recursion-non-convex}
  Suppose that Assumption~\ref{asm:f-smoothness} holds. Then for iterates $x_t$ generated by Algorithm~\ref{alg:rr} with stepsize $\gamma \leq \frac{1}{L n}$, we have
  \begin{equation}
    \label{eq:epoch-recursion-non-convex}
    f(x_{t+1}) \leq f(x_t) - \frac{\gamma n}{2} \sqn{\nabla f(x_t)} + \frac{\gamma L^2}{2} V_t,
  \end{equation}
  where $V_t$ is defined as in \Cref{def:vt}.
\end{lemma}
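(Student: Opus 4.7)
The strategy is to apply $L$-smoothness of $f$ to the full epoch step $x_t \to x_{t+1} = x_t - \gamma g_t$ (where $g_t = \sum_{i=0}^{n-1}\nabla f_{\pi_i}(x_t^i)$), and then to quantify the bias between $g_t$ and the ``true'' aggregate gradient $n \nabla f(x_t)$ using the deviations that define $V_t$. Concretely, I would proceed in four steps.

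First, by the standard smoothness inequality \eqref{eq:L-smoothness} applied to $(x_t, x_{t+1})$,
\[
  f(x_{t+1}) \leq f(x_t) - \gamma \langle \nabla f(x_t), g_t \rangle + \frac{\gamma^2 L}{2} \|g_t\|^2.
\]
Second, I would split the inner product via the polarization identity \eqref{eq:square-decompos} applied to $a = \nabla f(x_t)$ and $b = g_t/n$, with the scalar $\gamma n$ pulled out in front. This rewrites the cross term as
\[
  -\gamma \langle \nabla f(x_t), g_t \rangle = -\frac{\gamma n}{2} \|\nabla f(x_t)\|^2 \; - \; \frac{\gamma}{2n}\|g_t\|^2 \; + \; \frac{\gamma}{2n}\|g_t - n \nabla f(x_t)\|^2,
\]
which produces the desired ``descent'' term $-\tfrac{\gamma n}{2}\|\nabla f(x_t)\|^2$ and isolates a bias term.

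Third, combining the two $\|g_t\|^2$ contributions gives a coefficient $\tfrac{\gamma^2 L}{2} - \tfrac{\gamma}{2n}$, which is nonpositive precisely when $\gamma \le \tfrac{1}{Ln}$ (the hypothesis), allowing that term to be discarded. Finally, I would control $\|g_t - n \nabla f(x_t)\|^2$: since $\pi$ is a permutation, $n\nabla f(x_t) = \sum_{i=0}^{n-1}\nabla f_{\pi_i}(x_t)$, so
\[
  g_t - n \nabla f(x_t) = \sum_{i=0}^{n-1}\bigl[\nabla f_{\pi_i}(x_t^i) - \nabla f_{\pi_i}(x_t)\bigr].
\]
Applying \eqref{eq:sqnorm-sum-bound} followed by the Lipschitz gradient bound \eqref{eq:nabla-Lip} applied termwise, and using $x_t^0 = x_t$, gives an estimate of the form $n L^2 \sum_{i=0}^{n-1}\|x_t^i - x_t\|^2$, which is expressible through $V_t$ by Definition~\ref{def:vt}. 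Plugging back yields the claimed recursion.

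The only non-mechanical point is the choice of scaling in Step 2: one has to match the factor $n$ in the polarization identity so that the $\|g_t\|^2$ terms absorb exactly under $\gamma \le 1/(Ln)$, while simultaneously producing the clean $-\tfrac{\gamma n}{2}\|\nabla f(x_t)\|^2$ descent term. Everything else is a routine combination of smoothness and Jensen's inequality.
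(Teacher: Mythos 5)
Your proposal is correct and follows essentially the same route as the paper's proof: smoothness of $f$ applied to the epoch step $x_{t+1}=x_t-\gamma g_t$, the polarization identity with $a=\nabla f(x_t)$ and $b=g_t/n$, dropping the $\|g_t\|^2$ term via $\gamma\le\frac{1}{Ln}$, and bounding the bias $\|g_t/n-\nabla f(x_t)\|^2$ by Jensen plus gradient Lipschitzness to reach $\frac{\gamma L^2}{2}V_t$. The only difference is cosmetic (you apply \eqref{eq:sqnorm-sum-bound} to the unnormalized sum rather than \eqref{eq:sqnorm-jensen} to the average, which is the same inequality up to scaling).
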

\begin{proof}
Our approach for establishing this lemma is similar to that of \citep[Theorem~1]{Nguyen2020}, which we became aware of in the course of preparing this manuscript. Recall that $x_{t+1} = x_t - \gamma g_t$, where $g_t = \sum_{i=0}^{n-1} \nabla f_{\pi_{i}} (x_t^i)$. Using $L$-smoothness of $f$, we get
  \begin{eqnarray}
    f(x_{t+1}) &\overset{\eqref{eq:L-smoothness}}{\leq}& f(x_{t}) + \ev{\nabla f(x_t), x_{t+1} - x_t} + \frac{L}{2} \sqn{x_{t+1} - x_t} \nonumber \\
    &=& f(x_t) - \gamma n \ev{\nabla f(x_t), \frac{g_t}{n}} + \frac{ \gamma^2 L n^2}{2} \sqn{ \frac{g_t}{n} } \nonumber \\
    &\overset{\eqref{eq:square-decompos}}{=}& f(x_t) - \frac{\gamma n}{2} \br{ \sqn{\nabla f(x_t)} + \sqn{\frac{g_t}{n}} - \sqn{ \nabla f(x_t) - \frac{g_t}{n} } } + \frac{\gamma^2 L  n^2}{2} \sqn{ \frac{g_t}{n} } \nonumber \\
    &=& f(x_t) - \frac{\gamma n}{2} \sqn{\nabla f(x_t)} - \frac{\gamma n}{2} \br{1 - L \gamma n} \sqn{\frac{g_t}{n}} + \frac{\gamma n}{2} \sqn{\nabla f(x_t) - \frac{g_t}{n}}.
    \label{eq:ernc-1}
  \end{eqnarray}
  By assumption, we have $\gamma \leq \frac{1}{Ln}$, and hence $1 - L \gamma n \geq 0$. Using this in \eqref{eq:ernc-1}, we get
  \begin{equation}
    \label{eq:ernc-2}
    f(x_{t+1}) \leq f(x_t) - \frac{\gamma n}{2} \sqn{\nabla f(x_t)} + \frac{\gamma n}{2} \sqn{\nabla f(x_t) - \frac{g_t}{n}}. 
  \end{equation}
  For the last term in \eqref{eq:ernc-2}, we note
  \begin{eqnarray}
    \sqn{\nabla f(x_t) - \frac{g_t}{n}} &=& \sqn{ \frac{1}{n} \sum_{i=0}^{n-1} \left [ \nabla f_{\prm{i}} (x_t) - \nabla f_{\prm{i}} (x_t^i) \right ] } \nonumber \\
    &\overset{\eqref{eq:sqnorm-jensen}}{\leq}&  \frac{1}{n} \sum_{i=0}^{n-1} \sqn{\nabla f_{\prm{i}} (x_t) - \nabla f_{\prm{i}} (x_t^i) } \nonumber \\
    &\overset{\eqref{eq:nabla-Lip}}{\leq}&  \frac{1}{n} \sum_{i=0}^{n-1} L^2 \sqn{x_t - x_t^i} = \frac{L^2}{n} V_t.
    \label{eq:ernc-3}
  \end{eqnarray}
  Plugging in~\eqref{eq:ernc-3} into~\eqref{eq:ernc-2} yields the lemma's claim.
\end{proof}

\subsection{Bounding the backward per-epoch deviation}
\begin{lemma}
  \label{lemma:vt_rr}
  Suppose that Assumption~\ref{asm:f-smoothness} holds (with each $f_{i}$ possibly non-convex) and that Algorithm~\ref{alg:rr} is used with a stepsize $\gamma \le \frac{1}{2 L n}$. Then
  \begin{equation}
    \label{eq:vt_bound_rr}
		\et{V_t}
		\le \gamma^2n^3\sqn{\nabla f(x_t)} + \gamma^2n^2\sigma_{t}^2,
	\end{equation}
  where $V_t$ is defined as in \Cref{def:vt} and $\sigma_t^2 \eqdef \frac{1}{n} \sum_{j=1}^{n} \sqn{\nabla f_{j} (x_t) - \nabla f(x_t)}$. 
\end{lemma}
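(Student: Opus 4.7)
The plan is to first unroll the update of Algorithm~\ref{alg:rr} so that
\[ x_t^i - x_t = -\gamma \sum_{j=0}^{i-1} \nabla f_{\pi_j}(x_t^j), \]
and then decompose each inner gradient as $\nabla f_{\pi_j}(x_t^j) = \bigl[\nabla f_{\pi_j}(x_t^j) - \nabla f_{\pi_j}(x_t)\bigr] + \nabla f_{\pi_j}(x_t)$. Applying the basic inequality $\|a+b\|^2 \le 2\|a\|^2 + 2\|b\|^2$ produces two pieces: a ``drift'' piece, which measures how much the iterate has moved inside the epoch, and a ``frozen-gradient'' piece, which depends only on $\pi$ and $\nabla f_1(x_t), \ldots, \nabla f_n(x_t)$, and is therefore perfectly suited to Lemma~\ref{lem:sampling_wo_replacement}.

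For the drift piece, I would combine~\eqref{eq:sqnorm-sum-bound} with the $L$-Lipschitzness of $\nabla f_{\pi_j}$ to bound it by $iL^2 \sum_{j=0}^{i-1}\|x_t^j - x_t\|^2$. Summing the resulting inequality over $i$ from $1$ to $n$ and swapping the order of summation produces a self-referential bound roughly of the form
\[ n V_t \le 2\gamma^2 L^2 n^3 V_t + 2\gamma^2 \sum_{i=1}^n \Bigl\|\sum_{j=0}^{i-1}\nabla f_{\pi_j}(x_t)\Bigr\|^2, \]
where the $n^3$ arises from the coarse bound $\sum_{i=1}^n i \sum_{j=0}^{i-1}\|x_t^j-x_t\|^2 \le n^2 \cdot n V_t$. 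The stepsize assumption $\gamma \le 1/(2Ln)$ gives $2\gamma^2 L^2 n^3 \le n/2$, which is precisely the slack needed to absorb the $V_t$ term into the left-hand side and leave only the frozen-gradient source term on the right.

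For the frozen-gradient term, I would invoke Lemma~\ref{lem:sampling_wo_replacement} with $X_k = \nabla f_k(x_t)$ and $\sigma^2 = \sigma_t^2$, together with the variance decomposition~\eqref{eq:rv_moments}, to obtain
\[ \et{\Bigl\|\sum_{j=0}^{i-1}\nabla f_{\pi_j}(x_t)\Bigr\|^2} = i^2 \|\nabla f(x_t)\|^2 + \frac{i(n-i)}{n-1}\sigma_t^2. \]
Bounding $i^2 \le n^2$ and $\frac{i(n-i)}{n-1} \le n$ and then summing the $n$ terms yields the upper bound $n^3 \|\nabla f(x_t)\|^2 + n^2 \sigma_t^2$. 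Substituting back into the recursion and dividing by $n/2$ gives the claimed inequality (up to a universal constant that can be tightened by a careful choice of Young's parameter, or by first bounding $V_t$ by $\max_i \|x_t^i - x_t\|^2$ and running the same argument on that maximum).

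The main obstacle is setting up the self-referential recursion tightly enough that the coefficient $2\gamma^2 L^2 n^3$ in front of $V_t$ can be absorbed, without losing the asymmetry between the $n^3 \|\nabla f(x_t)\|^2$ term and the $n^2 \sigma_t^2$ term. That asymmetry is delivered by the without-replacement formula $\frac{i(n-i)}{n-1}$ in Lemma~\ref{lem:sampling_wo_replacement}, which grows only linearly in $i$, whereas the squared-mean term $i^2 \|\nabla f(x_t)\|^2$ grows quadratically; if one instead used a with-replacement or Cauchy--Schwarz-only bound, one would end up with an $n^3$ coefficient in front of $\sigma_t^2$ as well and lose the variance scaling that drives the final complexity.
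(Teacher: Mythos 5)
Your proposal follows essentially the same route as the paper's proof: the same drift-plus-frozen-gradient decomposition via Young's inequality, the same application of Lemma~\ref{lem:sampling_wo_replacement} together with the variance decomposition to the frozen term, and the same absorption of the self-referential $V_t$ term using $\gamma \le \frac{1}{2Ln}$. The only difference is bookkeeping: your crude bounds $i^2 \le n^2$ and $\frac{i(n-i)}{n-1}\le n$ leave roughly a factor of $4$ over the stated constants, which the paper avoids by keeping the exact sums $\sum_{k=0}^{n-1}k^2=\frac{(n-1)n(2n-1)}{6}$ and $\sum_{k=0}^{n-1}\frac{k(n-k)}{n-1}=\frac{n(n+1)}{6}$ before absorbing.
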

\begin{proof}
  Let us fix any $k\in[1, n-1]$ and find an upper bound for $\et{\norm{x_t^k - x_t}^2}$. First, note that
	\[
		x_t^k = x_t - \gamma\sum_{i=0}^{k-1}\nabla f_{\pi_i}(x_t^i).
	\]
	Therefore, by Young's inequality, Jensen's inequality and gradient Lipschitzness
	\begin{eqnarray*}
		\et{\|x_t^k - x_t\|^2}
		&=& \gamma^2\et{\left\lVert\sum_{i=0}^{k-1}\nabla f_{\pi_i}(x_t^i) \right\rVert^2} \\
		&\overset{\eqref{eq:sqnorm-triangle-2}}{\le}& 2\gamma^2\et{\left\lVert\sum_{i=0}^{k-1}\left(\nabla f_{\pi_i}(x_t^i)-\nabla f_{\pi_i}(x_t)\right)  \right\rVert^2} + 2\gamma^2\et{\left\lVert\sum_{i=0}^{k-1}\nabla f_{\pi_i}(x_t) \right\rVert^2} \\
		&\overset{\eqref{eq:sqnorm-sum-bound}}{\le}& 2\gamma^2k\sum_{i=0}^{k-1}\et{\norm{\nabla f_{\pi_i}(x_t^i)-\nabla f_{\pi_i}(x_t)}^2} + 2\gamma^2\et{\left\lVert\sum_{i=0}^{k-1}\nabla f_{\pi_i}(x_t) \right\rVert^2} \\
		&\overset{\eqref{eq:nabla-Lip}}{\le}& 2\gamma^2 L^2 k\sum_{i=0}^{k-1}\et{\|x_t^i-x_t\|^2} + 2\gamma^2\et{\left\lVert\sum_{i=0}^{k-1}\nabla f_{\pi_i}(x_t) \right\rVert^2}.
	\end{eqnarray*}
	Let us bound the second term. For any $i$ we have $\et{\nabla f_{\pi_i}(x_t)}=\nabla f(x_t)$, so using \Cref{lem:sampling_wo_replacement} (with vectors $\nabla f_{\pi_0} (x_t), \nabla f_{\pi_{1}} (x_t), \ldots, \nabla f_{\pi_{k-1}} (x_t)$) we obtain
	\begin{eqnarray*}
		\et{\left\lVert\sum_{i=0}^{k-1}\nabla f_{\pi_i}(x_t) \right\rVert^2}
		&\overset{\eqref{eq:rv_moments}}{=}& k^2\sqn{\nabla f(x_t)} + k^2\et{\left\lVert\frac{1}{k}\sum_{i=0}^{k-1}  (\nabla f_{\pi_i}(x_t) - \nabla f(x_t))\right\rVert^2} \\
		&\overset{\eqref{eq:sampling_wo_replacement}}{\le}& k^2\sqn{\nabla f(x_t)}+ \frac{k(n-k)}{n-1}\sigma_t^2.
	\end{eqnarray*}
	where $\sigma_t^2 \eqdef \frac{1}{n} \sum_{j=1}^{n} \sqn{\nabla f_{j} (x_t) - \nabla f(x_t)}$. Combining the produced bounds yields 
	\begin{align*}
		\et{\norm{x_t^k - x_t}^2}
		&\le 2\gamma^2 L^2 k \sum_{i=0}^{k-1}\et{\norm{x_t^i-x_t}^2} + 2\gamma^2 k^2\sqn{\nabla f(x_t)} + 2\gamma^2\frac{k(n-k)}{n-1}\sigma_t^2 \\
		&\le 2\gamma^2 L^2 k \ec{V_t} + 2\gamma^2 k^2\sqn{\nabla f(x_t)} + 2\gamma^2\frac{k(n-k)}{n-1}\sigma_t^2,
	\end{align*}
	whence
	\begin{align*}
		\ec{V_t}
		& = \sum_{k=0}^{n-1} \et{\|x_t^k - x_t\|^2} \\
    & \le \gamma^2 L^2 n(n-1) \ec{V_t} + \frac{1}{3}\gamma^2(n-1)n(2n-1)\sqn{\nabla f(x_t)} 
    + \frac{1}{3}\gamma^2 n(n+1)\sigma_t^2.
  \end{align*}
  Since $\ec{V_t}$ appears in both sides of the equation, we rearrange and use that $\gamma\le \frac{1}{2Ln}$ by assumption, which leads to
	\begin{align*}
		\ec{V_t} 
		&\le  \frac{4}{3}(1 - \gamma^2L^2n(n-1)) \ec{V_t} \\
		&\le  \frac{4}{9}\gamma^2(n-1)n(2n-1)\sqn{\nabla f(x_t)} + \frac{4}{9}\gamma^2 n(n+1)\sigma_t^2 \\
    &\le  \gamma^2n^3\sqn{\nabla f(x_t)} + \gamma^2n^2\sigma_t^2.
    \qedhere
	\end{align*}
\end{proof}

\subsection{A lemma for solving the non-convex recursion}
\begin{lemma}
  \label{lemma:noncvx-recursion-solution}
  Suppose that there exist constants $a, b, c \geq 0$ and nonnegative sequences $(s_{t})_{t=0}^{T}, (q_{t})_{t=0}^{T}$ such that for any $t$ satisfying $0 \leq t \leq T$ we have the recursion
  \begin{equation}
    \label{eq:nonconvex-recursion-init}
    s_{t+1} \leq \br{1 + a} s_{t} - b q_{t} + c.
  \end{equation}
  Then, the following holds:
  \begin{equation}
    \label{eq:nonconvex-recursion-soln}
    \min_{t=0, \ldots, T-1} q_{t} \leq \frac{\br{1+a}^T}{b T} s_{0} + \frac{c}{b}.
  \end{equation}
\end{lemma}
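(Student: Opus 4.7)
The plan is to rewrite the one-step recursion so that it telescopes, then extract a lower bound on the average of the $q_t$'s, and finally bound the minimum by the average with the right weights.

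First, I would divide both sides of \eqref{eq:nonconvex-recursion-init} by $(1+a)^{t+1}$, which yields
\[
\frac{s_{t+1}}{(1+a)^{t+1}} \leq \frac{s_t}{(1+a)^t} - \frac{b\, q_t}{(1+a)^{t+1}} + \frac{c}{(1+a)^{t+1}}.
\]
Summing this telescopically from $t=0$ to $t=T-1$ and using $s_T \geq 0$ on the left-hand side gives
\[
b \sum_{t=0}^{T-1} \frac{q_t}{(1+a)^{t+1}} \leq s_0 + c \sum_{t=0}^{T-1} \frac{1}{(1+a)^{t+1}}.
\]

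Next, letting $W \eqdef \sum_{t=0}^{T-1} (1+a)^{-(t+1)}$, I would lower bound the weighted sum of $q_t$'s by $q_{\min} \cdot W$, where $q_{\min} \eqdef \min_{t=0,\ldots,T-1} q_t$, obtaining
\[
q_{\min} \leq \frac{s_0}{b W} + \frac{c}{b}.
\]
Finally, since every term in $W$ is at least $(1+a)^{-T}$ (the smallest exponent being $t+1 = T$), we have $W \geq T (1+a)^{-T}$, so $1/W \leq (1+a)^T / T$. Substituting gives the desired bound \eqref{eq:nonconvex-recursion-soln}.

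There is no real obstacle here: the only subtlety is recognizing that dividing by $(1+a)^{t+1}$ turns the affine recursion into a telescoping one, after which everything is a clean geometric-sum estimate. The edge case $a=0$ is handled by the same argument (all factors reduce to $1$), and the case $b=0$ makes the bound vacuous.
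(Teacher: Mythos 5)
Your proof is correct and is essentially identical to the paper's own argument: dividing by $(1+a)^{t+1}$ is the same as the paper's multiplication by the weights $w_t = (1+a)^{-(t+1)}$, followed by the same telescoping, the same weighted-minimum bound, and the same estimate $W \geq T(1+a)^{-T}$.
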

\begin{proof}
  The first part of the proof (for $a > 0$) is a distillation of the recursion solution in Lemma~2 of \citet{Khaled2020} and we closely follow their proof. Define 
  \[ w_{t} \eqdef \frac{1}{\br{1+a}^{t+1}}. \]
  Note that $w_{t} \br{1+a} = w_{t-1}$ for all $t$. Multiplying both sides of \eqref{eq:nonconvex-recursion-init} by $w_{t}$,
  \[
    w_{t} s_{t+1} \leq \br{1 + a} w_t s_t - b w_t q_t + c w_t = w_{t-1} s_{t} - b w_{t} q_t + c w_t.
  \]
  Rearranging, we get
$
    b w_{t} q_{t} \leq w_{t-1} s_{t} - w_{t} s_{t+1} + c w_{t}.
$
  Summing up as $t$ varies from $0$ to $T-1$ and noting that the sum telescopes leads to
  \begin{align*}
    \sum_{t=0}^{T-1} b w_{t} q_{t} &\leq \sum_{t=0}^{T-1} \br{w_{t-1} s_{t} - w_{t} s_{t+1}} + c \sum_{t=0}^{T-1} w_{t} \\
    &= w_{0} s_{0} - w_{T-1} s_{T} + c \sum_{t=0}^{T-1} w_{t} \\
    &\leq w_{0} s_{0} + c \sum_{t=0}^{T-1} w_{t}.
  \end{align*}
  Let $W_{T} = \sum_{t=0}^{T-1} w_{t}$. Dividing both sides by $W_{T}$, we get
  \begin{align}
    \frac{1}{W_{T}} \sum_{t=0}^{T-1} b w_{t} q_{t} \leq \frac{w_0 s_0}{W_T} + c.
    \label{eq:nc-rec-1}
  \end{align}
  Note that the left-hand side of \eqref{eq:nc-rec-1} satisfies
  \begin{equation}
    b \min_{t=0, \ldots, T-1} q_t \leq \frac{1}{W_T} \sum_{t=0}^{T-1} b w_t q_t.
    \label{eq:nc-rec-2}
  \end{equation}
 For the right-hand side of \eqref{eq:nc-rec-1}, we have
  \begin{equation}
    W_{T} = \sum_{t=0}^{T-1} w_{t} \geq T \min_{t=0, \ldots, T-1} w_{t} = T w_{T-1} = \frac{T}{\br{1+a}^{T}}.
    \label{eq:nc-rec-3}
  \end{equation}
  Substituting with \eqref{eq:nc-rec-3} in \eqref{eq:nc-rec-2} and dividing both sides by $b$, we finally get
  \[
    \min_{t=0, \ldots, T-1} q_{t} \leq \frac{\br{1 + a}^T}{bT} s_0 + \frac{c}{b}.
    \qedhere
  \]
\end{proof}

\subsection{Proof of Theorem~\ref{thm:rr-nonconvex}}
\begin{proof}
  \textbf{Without PL.} Taking expectation in \Cref{lemma:epoch-recursion-non-convex} and then using \Cref{lemma:vt_rr}, we have that for any $t \in \{ 0, 1, \ldots, T-1 \}$,
  \begin{eqnarray*}
    \et{f(x_{t+1})} &\overset{\eqref{eq:epoch-recursion-non-convex}}{\leq} & f(x_t) - \frac{\gamma n}{2} \sqn{\nabla f(x_t)} + \frac{\gamma L^2}{2} \et{V_t} \\
    &\overset{\eqref{eq:vt_bound_rr}}{\leq} & f(x_t) - \frac{\gamma n}{2} \sqn{\nabla f(x_t)} + \frac{\gamma L^2}{2} \br{\gamma^2 n^3 \sqn{\nabla f(x_t)} + \gamma^2 n^2 \sigma_{t}^2} \\
    &= & f(x_t) - \frac{\gamma n}{2} \br{1 - \gamma^2 L^2 n^2} \sqn{\nabla f(x_t)} + \frac{ \gamma^3 L^2 n^2 \sigma_{t}^2}{2} .
  \end{eqnarray*}
  Let $\delta_{t} \eqdef f(x_t) - f_\ast$. Adding $-f_\ast$ to both sides and using Assumption~\ref{asm:2nd-moment},
  \begin{align*}
    \et{\delta_{t+1}}  & \leq  \delta_{t} - \frac{\gamma n}{2} \br{1 - \gamma^2 L^2 n^2} \sqn{\nabla f(x_t)} + \frac{\gamma^3 L^2  n^2 \sigma_{t}^2}{2}  \\
    & \leq  \br{1 + \gamma^3  A L^2 n^2} \delta_{t} - \frac{\gamma n}{2} \br{1 - \gamma^2 L^2 n^2} \sqn{\nabla f(x_t)} + \frac{ \gamma^3 L^2 n^2 B^2}{2} .
  \end{align*}
  Taking unconditional expectations in the last inequality and using that by assumption on $\gamma$ we have $1 - \gamma^2 L^2 n^2 \geq \frac{1}{2}$, we get the estimate
  \begin{equation}
    \label{eq:thm-rr-nc-1}
    \ec{\delta_{t+1}} \leq \br{1 + \gamma^3 A L^2  n^2} \ec{\delta_{t}} - \frac{\gamma n}{4} \ecn{\nabla f(x_t)} + \frac{ \gamma^3 L^2 n^2 B^2}{2} .
  \end{equation}
  Comparing \eqref{eq:nonconvex-recursion-init} with \eqref{eq:thm-rr-nc-1} verifies that the conditions of \Cref{lemma:noncvx-recursion-solution} are readily satisfied. Applying the lemma, we get
  \[
    \min_{t=0, \ldots, T-1} \ecn{\nabla f(x_t)} \leq \frac{4 \br{1 + \gamma^3 A L^2 n^2}^{T}}{\gamma n T} \br{f(x_0) - f_\ast} + 2 \gamma^2 L^2  n B^2.
  \]
  Using that $1 + x \leq \exp(x)$ and that the stepsize $\gamma$ satisfies $\gamma \leq \br{A L^2 n^2 T}^{-1/3}$, we have
  \[ \br{1 + \gamma^3 A L^2  n^2}^{T} \leq \exp\br{\gamma^3 A L^2  n^2 T} \leq \exp\br{1} \leq 3. \]
  Using this in the previous bound, we finally obtain
  \[
    \min_{t=0, \ldots, T-1} \ecn{\nabla f(x_t)} \leq \frac{12 \br{f(x_0) - f_\ast}}{\gamma n T} + 2  \gamma^2 L^2 n B^2.
  \]
  
  \textbf{With PL.} Now we additionally assume that $A=0$ and that $\frac{1}{2}\|\nabla f(x)\|^2\ge \mu(f(x)-f_*)$. Then, \eqref{eq:thm-rr-nc-1} yields
  \begin{align*}
  	\ec{\delta_{t+1}}
  	&\leq \ec{\delta_{t}} - \frac{\gamma n}{4} \ecn{\nabla f(x_t)} + \frac{ \gamma^3 L^2 n^2 B^2}{2} \\
  	&\le \ec{\delta_{t}} - \frac{\gamma \mu n}{2} \ec{f(x_t)-f_*} + \frac{ \gamma^3 L^2 n^2 B^2}{2} \\
  	&= \br{1-\frac{\gamma\mu n}{2}}\ec{\delta_t} +  \frac{ \gamma^3 L^2 n^2 B^2}{2} .
  \end{align*}
  As in the proof of Theorem~\ref{thm:only-f-sc-rr-conv}, we recurse this bound to $x_0$:
  \begin{align*}
	  \ec{\delta_{T}}
	  &\le \br{1-\frac{\gamma\mu n}{2}}^T\delta_0 + \frac{ \gamma^3 L^2 n^2 B^2}{2} \sum_{j=0}^{T-1}\br{1-\frac{\gamma\mu n}{2}}^j \\
	  &\le \br{1-\frac{\gamma\mu n}{2}}^T\delta_0 + \frac{ \gamma^3 L^2 n^2 B^2}{2} \frac{2}{\gamma\mu n} \\
	  &= \br{1-\frac{\gamma\mu n}{2}}^T\delta_0 + \gamma^2\kappa L n B^2.
  \qedhere
  \end{align*}
\end{proof}

\subsection{Proof of complexity}
\begin{corollary}
  \label{corr:rr-nonconvex}
  Choose the stepsize $\gamma$ as
  \[ \gamma = \min \pbr{ \frac{1}{2 L n}, \frac{1}{A^{1/3} L^{2/3} n^{2/3} T^{1/3} }, \frac{\e}{2 L \sqrt{n} B} }. \]
  Then the minimum gradient norm satisfies $$\min \limits_{t=0, \ldots, T-1} \ecn{ \nabla f(x_t)} \leq \e^2$$ provided the total number of iterations satisfies
  \[ Tn \geq \frac{48 \delta_{0} L \sqrt{n}}{\e^2} \max \pbr{ \sqrt{n}, \frac{\sqrt{6 \delta_{0} A}}{\e}, \frac{B}{\e} }.  \]
\end{corollary}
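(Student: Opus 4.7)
The plan is to substitute the prescribed stepsize into the bound given by Theorem~\ref{thm:rr-nonconvex} and split the target accuracy $\e^2$ between the optimization term and the noise term. Specifically, writing $\delta_0 = f(x_0) - f_\ast$, it suffices to separately ensure that
\[ \frac{12 \delta_0}{\gamma n T} \leq \frac{\e^2}{2} \qquad \text{and} \qquad 2 \gamma^2 L^2 n B^2 \leq \frac{\e^2}{2}. \]
The second inequality is equivalent to $\gamma \leq \frac{\e}{2 L \sqrt{n} B}$, which is exactly why the third argument appears inside the min defining $\gamma$. The first inequality rearranges to the sample-complexity condition $n T \geq \frac{24 \delta_0}{\gamma \e^2}$, so the required number of iterations depends on which of the three terms in the min achieves $\gamma$.

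Next, I would verify that the stepsize choice satisfies both conditions in the hypothesis of Theorem~\ref{thm:rr-nonconvex}, namely $\gamma \leq \frac{1}{2 L n}$ and $\gamma \leq (A L^2 n^2 T)^{-1/3}$; this is automatic since $\gamma$ is taken as the minimum of three quantities including both of these. Then I plug $\gamma$ into $n T \geq \frac{24 \delta_0}{\gamma \e^2}$ and analyze the three cases corresponding to which term of the min is active:
\begin{itemize}[leftmargin=0.15in,itemsep=0.01in,topsep=0pt]
\item If $\gamma = \frac{1}{2Ln}$, the condition becomes $nT \geq \frac{48 \delta_0 L n}{\e^2}$, giving the $\sqrt{n}$ entry in the max.
\item If $\gamma = \frac{\e}{2 L \sqrt{n} B}$, the condition becomes $nT \geq \frac{48 \delta_0 L \sqrt{n} B}{\e^3}$, giving the $B/\e$ entry.
\item If $\gamma = \frac{1}{A^{1/3} L^{2/3} n^{2/3} T^{1/3}}$, the condition becomes $n^{2/3} T^{2/3} \geq \frac{24 \delta_0 A^{1/3} L^{2/3} n^{2/3}}{\e^2}$, which rearranges (after multiplying by a factor of $n^{1/3}/n^{1/3}$ and raising to the $3/2$ power) to $nT \geq \frac{48 \delta_0 L \sqrt{n} \sqrt{6 \delta_0 A}}{\e^3}$, giving the $\sqrt{6 \delta_0 A}/\e$ entry.
\end{itemize}
Taking the maximum over the three cases produces exactly the claimed bound $Tn \geq \frac{48 \delta_0 L \sqrt{n}}{\e^2} \max\{\sqrt{n}, \sqrt{6 \delta_0 A}/\e, B/\e\}$.

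There is no real obstacle here beyond being careful with algebra: the only mildly delicate step is the third case, where one must track the constants when rearranging $n^{2/3} T^{2/3}$ into the form $nT$ by distributing a factor of $\sqrt{nT}$ and using the relation $48^2 \cdot 6 = 24^3$ to match the constants stated in the corollary. Once the three cases are combined via the max, the corollary follows immediately.
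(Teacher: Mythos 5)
Your proposal is correct and follows essentially the same route as the paper: plug the stepsize into Theorem~\ref{thm:rr-nonconvex}, force the noise term below $\e^2/2$ via the third entry of the min, and convert the remaining requirement $nT \geq 24\delta_0/(\gamma\e^2)$ into the three-way max, resolving the implicit $T$-dependence in the $(A L^2 n^2 T)^{1/3}$ case exactly as the paper does. The only blemish is the intermediate display in your third case, which should read $n\,T^{2/3} \geq \frac{24\delta_0 A^{1/3} L^{2/3} n^{2/3}}{\e^2}$ rather than $n^{2/3}T^{2/3} \geq \cdots$; your final constants ($48^2\cdot 6 = 24^3$) are nonetheless right.
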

\begin{proof}
  From \Cref{thm:rr-nonconvex}
  \[ \min_{t=0, \ldots, T-1} \ecn{\nabla f(x_t)} \leq \frac{12 \br{f(x_0) - f_\ast}}{\gamma n T} + 2 \gamma^2 L^2  n B^2.  \]
  Note that by condition on the stepsize $2 L^2 \gamma^2 n B^2 \leq \e^2/2$, hence 
  \[ \min_{t=0, \ldots, T-1} \ecn{\nabla f(x_t)} \leq \frac{12 \br{f(x_0) - f_\ast}}{\gamma n T} + \frac{\e^2}{2}. \]
  Thus, to make the squared gradient norm smaller than $\e^2$ we require
  \[ \frac{12 \br{f(x_0) - f_\ast}}{\gamma n T} \leq \frac{\e^2}{2}, \]
  or equivalently
  \begin{equation}
    \label{corr:rr-nonconvex-proof-1}
    n T \geq \frac{24 \br{f(x_0) - f_\ast}}{\e^2 \gamma} = \frac{24 \delta_{0}}{\e^2} \max \pbr{ 2 L n, \br{A L^2 n^2 T}^{1/3}, \frac{2 L \sqrt{n} B}{\e} }, 
  \end{equation}
  where $\delta_{0} \eqdef f(x_0) - f_\ast$ and where we plugged in the value of the stepsize $\gamma$ we use. Note that $nT$ appears on both sides in the second term in the maximum in \eqref{corr:rr-nonconvex-proof-1}, hence we can cancel out and simplify:
  \[ n T \geq \frac{24 \delta_{0}}{\e^2} (A L^2 n^2 T)^{1/3} \Longleftrightarrow n T \geq \frac{(24 \delta_{0})^{3/2} L \sqrt{An}}{\e^3}.   \]
  Using this simplified bound in \eqref{corr:rr-nonconvex-proof-1} we obtain that $\min_{t=0, \ldots, T-1} \ecn{\nabla f(x_t)} \leq \e^2$ provided
  \[
    n T \geq \frac{48 \delta_{0} L \sqrt{n}}{\e^2} \max \pbr{ \sqrt{n}, \frac{\sqrt{6 \delta_{0} A}}{\e}, \frac{B}{\e} }.
    \qedhere
  \]
\end{proof}

\section{Convergence results for IG}
\label{sec:ig-convergence}
In this section we present results that are extremely similar to the previously obtained bounds for RR and SO. For completeness, we also provide a full description of IG in Algorithm~\ref{alg:ig}.
\begin{algorithm}[H]
    \caption{Incremental Gradient (IG)}
    \label{alg:ig}
 \begin{algorithmic}[1]
   \Require Stepsize $\gamma > 0$, initial vector $x_0 = x_0^0 \in \R^d$, number of epochs $T$
    \For{epochs $t=0,1,\dotsc,T-1$}
       \For{$i=0, 1, \ldots, n-1$}
          \State $x_{t}^{i+1} = x_t^{i} - \gamma \nabla f_{i+1} (x_t^i)$
       \EndFor
       \State $x_{t+1} = x_{t}^{n}$
    \EndFor
 \end{algorithmic}
 \end{algorithm}

\begin{theorem}
  \label{thm:ig}
  Suppose that Assumption~\ref{asm:f-smoothness} is satisfied. Then we have the following results for the Incremental Gradient method:
    
\begin{itemize}[leftmargin=0.15in,itemsep=0.01in,topsep=0pt]
  \item {\emone If each $f_i$ is $\mu$-strongly convex}: if $\gamma \leq \frac{1}{L}$, then
  \[ \sqn{x_{T} - x_\ast} \leq \br{1 - \gamma \mu}^{nT} \sqn{x_0 - x_\ast} + \frac{\gamma^2 L n^2  \sigmaesc^2}{\mu}. \]
  By carefully choosing the stepsize as in \Cref{corr:all-sc-rr-conv}, we see that this result implies that IG has sample complexity  $\ctO\br{\kappa + \frac{\sqrt{\kappa} n \sigmaesc}{\mu \sqrt{\e}}}$ in order to reach a point $\tilde{x}$ with $\norm{\tilde{x} - x_\ast}^2 \leq \e$.
  \item {\emone If $f$ is $\mu$-strongly convex and each $f_i$ is convex}: if $\gamma \leq \frac{1}{\sqrt{2} n L}$, then
  \[ \sqn{x_T - x_\ast} \leq \br{1 - \frac{\gamma \mu n}{2}}^{T} \sqn{x_0 - x_\ast} + 2 \gamma^2 \kappa  n^2 \sigmaesc^2.  \]
  Using the same approach for choosing the stepsize as \Cref{corr:all-sc-rr-conv}, we see that IG in this setting reaches an $\e$-accurate solution after $\ctO\br{ n \kappa + \frac{\sqrt{\kappa} n \sigmaesc}{\mu \sqrt{\e}} }$ individual gradient accesses.
  \item {\emone If each $f_i$ is convex}: if $\gamma \leq \frac{1}{\sqrt{2} n L}$, then
  \[ f(\hat{x}_T) - f(x_\ast) \leq \frac{\sqn{x_0 - x_\ast}}{2 \gamma n T} + \frac{\gamma^2 L n^2 \sigmaesc^2}{2} , \]
  where $\hat{x}_T \eqdef \frac{1}{T} \sum_{t=1}^{T} x_t$. Choosing the stepsize $\gamma = \min\pbr{\frac{1}{\sqrt{2} n L}, \frac{\sqrt{\e}}{\sqrt{L} n \sigmaesc} }$, then the average of iterate generated by IG is an $\e$-accurate solution (i.e., $f(\hat{x}_T) - f(x_\ast) \leq \e$) provided that the total number of iterations satisfies
  \[ n T \geq \frac{\sqn{x_0 - x_\ast}}{\e} \max \pbr{ \sqrt{8} n L, \frac{\sqrt{L} \sigmaesc n}{\sqrt{\e}} }. \]
  \item {\emone If each $f_i$ is possibly non-convex}: if Assumption~\ref{asm:2nd-moment} holds with constants $A, B \geq 0$ and $\gamma \leq \min \pbr{ \frac{1}{ \sqrt{8} n L}, \frac{1}{(4 L^2 n^3 A T)^{1/3}}}$, then
  \[ 
    \min_{t=0, \ldots, T-1} \sqn{\nabla f(x_t)} \leq \frac{12 \br{f(x_0) - f_\ast}}{\gamma n T} + 8 \gamma^2 L^2 n^2 B^2. \]
  Using an approach similar to \Cref{corr:rr-nonconvex}, we can establish that IG reaches a point with gradient norm less than $\e$ provided that the total number of iterations exceeds
  \[ n T \geq \frac{48 \br{f(x_0) - f_\ast} L n}{\e^2} \max \pbr{ \sqrt{2}, \frac{\sqrt{24 \br{f(x_0) - f_\ast}A}}{\e}, \frac{2 B}{\e}  }. \]
\end{itemize}
\end{theorem}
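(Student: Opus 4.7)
The plan is to obtain each of the four bullets as the \emph{deterministic analogue} of the corresponding RR/SO result already proved in the paper, paying the expected price of an extra factor of $n$ in every term that, under RR/SO, was tamed by the sampling-without-replacement concentration of Lemma~\ref{lem:sampling_wo_replacement}. Since IG uses a fixed ordering, all expectations over the permutation collapse, and wherever the RR/SO proof invoked $\ec{\overline{X}_\pi}=\overline{X}$ and the variance identity $\ec{\|\overline{X}_\pi-\overline{X}\|^2}=\tfrac{n-k}{k(n-1)}\sigma^2$, I would substitute the worst-case Jensen bound $\|\sum_{i=0}^{k-1} y_i\|^2 \le k\sum_{i=0}^{k-1}\|y_i\|^2$. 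This single change propagates through every proof.

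For the two strongly convex bullets, I would replay the proof of Theorem~\ref{thm:all-sc-rr-conv} (respectively Theorem~\ref{thm:only-f-sc-rr-conv}) verbatim, using the same Bregman-divergence decomposition \eqref{eq:bi89gfdb09hff} applied to the IG reference points $x_\ast^i=x_\ast-\gamma\sum_{j=0}^{i-1}\nabla f_{\pi_j}(x_\ast)$. The only term that needs a new bound is the third Bregman divergence $D_{f_{\pi_i}}(x_\ast^i,x_\ast)$. Smoothness gives $D_{f_{\pi_i}}(x_\ast^i,x_\ast)\le\tfrac{L}{2}\|x_\ast^i-x_\ast\|^2$, and now, with no averaging over $\pi$ available, \eqref{eq:sqnorm-sum-bound} yields $\|x_\ast^i-x_\ast\|^2\le\gamma^2 i\sum_{j=0}^{i-1}\|\nabla f_{\pi_j}(x_\ast)\|^2\le\gamma^2 n^2\sigmaesc^2$. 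Thus the role of $\sigmass$ is played by $\tfrac{\gamma L n^2}{2}\sigmaesc^2$, which is exactly the extra factor of $n$ that appears in the stated bound. Unrolling the per-step recursion as in Appendix~\ref{sec:proof-of-thm-1} then closes the first bullet; for the second bullet the same substitution in Lemma~\ref{lem:improved_convex} (see below) gives the required $\gamma^2\kappa n^2\sigmaesc^2$.

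For the non-strongly convex bullet, I would re-derive Lemma~\ref{lem:convex_rr_deviation} for IG. The decomposition of $x_t^k-x_{t+1}$ into $\sum_{i=k}^{n-1}(\nabla f_{\pi_i}(x_t^i)-\nabla f_{\pi_i}(x_\ast))+\sum_{i=k}^{n-1}\nabla f_{\pi_i}(x_\ast)$ and the smoothness-based bound on the first sum go through unchanged. The second sum, which under RR is bounded using \Cref{lem:sampling_wo_replacement} to obtain $\tfrac{k(n-k)}{n-1}\sigmaesc^2$, is now bounded deterministically by $(n-k)\sum_{i=k}^{n-1}\|\nabla f_{\pi_i}(x_\ast)\|^2\le n\cdot n\sigmaesc^2=n^2\sigmaesc^2$. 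Summing over $k$ gives a forward deviation of order $\gamma^2 n^3\sigmaesc^2$ instead of $\gamma^2 n^2\sigmaesc^2$. Plugging this into the IG analogue of Lemma~\ref{lem:improved_convex} and telescoping exactly as in the proof of Theorem~\ref{thm:weakly-convex-f-rr-conv} produces $\tfrac{\|x_0-x_\ast\|^2}{2\gamma n T}+\tfrac{\gamma^2 L n^2\sigmaesc^2}{2}$, and the stated sample complexity follows by the same stepsize balancing as in Corollary~\ref{corr:weakly-convex-f}.

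The non-convex bullet is the most notationally intensive but conceptually identical. Lemma~\ref{lemma:epoch-recursion-non-convex} does not use randomness and applies directly to IG. I would re-prove Lemma~\ref{lemma:vt_rr} deterministically: starting from $\|x_t^k-x_t\|^2\le 2\gamma^2L^2 k\sum_{i=0}^{k-1}\|x_t^i-x_t\|^2+2\gamma^2\|\sum_{i=0}^{k-1}\nabla f_{\pi_i}(x_t)\|^2$, I would split the last gradient sum as $\sum_{i=0}^{k-1}(\nabla f_{\pi_i}(x_t)-\nabla f(x_t))+k\nabla f(x_t)$ and bound each piece by Jensen, obtaining the deterministic estimate $\|\sum_{i=0}^{k-1}\nabla f_{\pi_i}(x_t)\|^2\le 2k^2\|\nabla f(x_t)\|^2+2kn\sigma_t^2$. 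Summing over $k$, absorbing the $V_t$ term by the stepsize restriction $\gamma\le\tfrac{1}{\sqrt{8}\,nL}$, and using Assumption~\ref{asm:2nd-moment} as in the proof of Theorem~\ref{thm:rr-nonconvex} gives $\et{V_t}=\mathcal{O}(\gamma^2 n^4\|\nabla f(x_t)\|^2+\gamma^2 n^3\sigma_t^2)$, an extra factor of $n$ over \eqref{eq:vt_bound_rr}. Substituting into \eqref{eq:epoch-recursion-non-convex} and applying Lemma~\ref{lemma:noncvx-recursion-solution} yields the stated bound, with the $8\gamma^2 L^2 n^2 B^2$ noise term reflecting the lost factor of $n$. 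The stepsize-balancing argument of Corollary~\ref{corr:rr-nonconvex} then delivers the sample complexity. The main obstacle, throughout, is not conceptual but bookkeeping: tracking exactly where each deterministic $n$ appears so that the final bounds match the stated factors $n^2$, $n^2$, $n^2$ and $n^2$ in the four bullets, and verifying that the stricter stepsize conditions $\gamma\le\tfrac{1}{\sqrt{2}nL}$ and $\gamma\le\tfrac{1}{\sqrt{8}nL}$ are precisely what is required to absorb the deterministic deviation estimates.
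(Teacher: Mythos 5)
Your strategy is exactly the paper's: each bullet is proved by replaying the corresponding RR/SO argument with the fixed ordering, replacing every appeal to Lemma~\ref{lem:sampling_wo_replacement} by the deterministic Jensen bound \eqref{eq:sqnorm-sum-bound}, which costs a factor of $n$ in the variance terms (this is precisely Lemmas~\ref{lem:convex_ig_deviation}, \ref{lem:convex-ig-recursion} and~\ref{lemma:vt-bound-ig} and the first bullet's in-line computation of $D_{f_{i+1}}(x_\ast^i,x_\ast)\le\frac{\gamma^2 Ln^2}{2}\sigmaesc^2$).

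One bookkeeping slip in the non-convex bullet needs fixing: you state the final deviation bound as $V_t=\mathcal{O}(\gamma^2 n^4\|\nabla f(x_t)\|^2+\gamma^2 n^3\sigma_t^2)$, but only the $\sigma_t^2$ term degrades by a factor of $n$ relative to \eqref{eq:vt_bound_rr}; the gradient-norm term must stay at order $\gamma^2 n^3\|\nabla f(x_t)\|^2$ (the paper's Lemma~\ref{lemma:vt-bound-ig} gives $4\gamma^2n^3\|\nabla f(x_t)\|^2+4\gamma^2n^3\sigma_t^2$). This matters: with an $n^4$ coefficient, absorbing $\frac{\gamma L^2}{2}V_t$ into the descent term $-\frac{\gamma n}{2}\|\nabla f(x_t)\|^2$ would force $\gamma\lesssim L^{-1}n^{-3/2}$, and the stated condition $\gamma\le\frac{1}{\sqrt{8}nL}$ would not suffice. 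Your own intermediate estimate $\bigl\|\sum_{i=0}^{k-1}\nabla f_{\pi_i}(x_t)\bigr\|^2\le 2k^2\|\nabla f(x_t)\|^2+2kn\sigma_t^2$ already delivers the correct order once you sum over $k$ (since $\sum_{k=0}^{n-1}k^2=\Theta(n^3)$), so the fix is just to carry that summation through rather than applying the blanket ``extra factor of $n$'' to both terms.
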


The proof of Thoerem~\ref{thm:ig} is given in the rest of the section, but first we briefly discuss the convergence rates and the relation of the result on strongly convex objectives to the lower bound of \citet{Safran2020good}.

\paragraph{Discussion of the convergence rates.} A brief comparison between the sample complexities given for IG in Theorem~\ref{thm:ig} and those given for RR (in Table~\ref{tab:conv-rates}) reveals that IG has similar rates to RR but with a worse dependence on $n$ in the variance term (the term associated with $\sigmaesc$ in the convex case and $B$ in the non-convex case), in particular IG is worse by a factor of $\sqrt{n}$. This difference is significant in the large-scale machine learning regime, where the number of data points $n$ can be on the order of thousands to millions.

\paragraph{Discussion of existing lower bounds.} \citet{Safran2020good} give the lower bound (in a problem with $\kappa = 1$) 
\[ \sqn{x_{T} - x_\ast} = \Omega\br{ \frac{\sigmaesc^2}{\mu^2 T^2} }. \]

This implies a sample complexity of $\cO\br{ \frac{n \sigmaesc}{\mu \sqrt{\e}}}$, which matches our upper bound (up to an extra iteration and log factors) in the case each $f_i$ is strongly convex and $\kappa = 1$.

\subsection{Preliminary Lemmas for Theorem~\ref{thm:ig}}
\subsubsection{Two lemmas for convex objectives}
\begin{lemma}
  \label{lem:convex_ig_deviation}
	Consider the iterates of Incremental Gradient (Algorithm~\ref{alg:ig}). Suppose that functions $f_1, \ldots, f_n$ are convex and that Assumption~\ref{asm:f-smoothness} is satisfied. Then it holds
  \begin{equation}
    \label{eq:lma-ig-cVt-bound}
    \sum_{k=0}^{n-1} \sqn{x_t^k - x_{t+1}} \leq 4 \gamma^2 L n^2 \sum_{i=0}^{n-1} D_{f_{i+1}} (x_\ast, x_t^i) + 2 \gamma^2 n^3 \sigmaesc^2,
  \end{equation}
  where $\sigmaesc^2$ is the variance at the optimum given by $\sigmaesc^2 \eqdef \frac{1}{n} \sum_{i=1}^{n} \sqn{\nabla f_{i} (x_\ast)}$.
\end{lemma}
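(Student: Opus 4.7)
The plan is to mirror the proof of Lemma~\ref{lem:convex_rr_deviation} while dropping all expectations (since IG is deterministic) and replacing the application of Lemma~\ref{lem:sampling_wo_replacement} with a crude Cauchy--Schwarz bound, which is the step that will cost us an extra factor of $n$ in the variance term. Concretely, fix any $k \in \{0,\dotsc,n-1\}$. Because IG uses the identity permutation, the telescoping identity
\[
  x_t^k - x_{t+1} \;=\; \gamma \sum_{i=k}^{n-1} \nabla f_{i+1}(x_t^i)
\]
holds deterministically. Splitting this as $\gamma\sum_{i=k}^{n-1}\bigl(\nabla f_{i+1}(x_t^i)-\nabla f_{i+1}(x_\ast)\bigr) + \gamma\sum_{i=k}^{n-1}\nabla f_{i+1}(x_\ast)$ and applying \eqref{eq:sqnorm-triangle-2} gives a clean decomposition into a ``descent'' piece and a ``noise at the optimum'' piece.

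Next I would handle the two pieces exactly as in the RR proof, but without any randomization. For the descent piece, I would apply \eqref{eq:sqnorm-sum-bound} to move the square inside the sum (paying a factor of $n$), and then use the smoothness--Bregman inequality \eqref{eq:bregman_lower_smooth} to obtain
\[
  2\gamma^2 \Bigl\lVert \sum_{i=k}^{n-1}\bigl(\nabla f_{i+1}(x_t^i)-\nabla f_{i+1}(x_\ast)\bigr) \Bigr\rVert^2
  \;\le\; 4\gamma^2 L n \sum_{i=0}^{n-1} D_{f_{i+1}}(x_\ast, x_t^i),
\]
where I have enlarged the summation range from $\{k,\dotsc,n-1\}$ to $\{0,\dotsc,n-1\}$ using nonnegativity of Bregman divergences; this makes the bound uniform in $k$. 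For the noise piece, in RR we could apply Lemma~\ref{lem:sampling_wo_replacement} to get a $k(n-k)/(n-1)$ factor; here we have no randomness, so I would simply invoke \eqref{eq:sqnorm-sum-bound} again, producing
\[
  2\gamma^2 \Bigl\lVert \sum_{i=k}^{n-1} \nabla f_{i+1}(x_\ast) \Bigr\rVert^2
  \;\le\; 2\gamma^2 n \sum_{i=k}^{n-1} \sqn{\nabla f_{i+1}(x_\ast)}
  \;\le\; 2\gamma^2 n \sum_{i=0}^{n-1} \sqn{\nabla f_{i+1}(x_\ast)}
  \;=\; 2\gamma^2 n^2 \sigmaesc^2,
\]
which is exactly where IG loses a factor of $n$ compared to RR.

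Finally, summing the resulting per-$k$ inequality over $k=0,\dotsc,n-1$ multiplies both contributions by $n$, yielding
\[
  \sum_{k=0}^{n-1} \sqn{x_t^k - x_{t+1}}
  \;\le\; 4\gamma^2 L n^2 \sum_{i=0}^{n-1} D_{f_{i+1}}(x_\ast, x_t^i) + 2\gamma^2 n^3 \sigmaesc^2,
\]
which is the claim. There is no real obstacle here---the argument is a direct deterministic shadow of the RR proof---and the main conceptual point to highlight is precisely the absence of the sampling-without-replacement variance-reduction: in RR the inner sum over $k$ of $\lVert\sum_{i=k}^{n-1}\nabla f_{\pi_i}(x_\ast)\rVert^2$ collapses to $\cO(n^2 \sigmaesc^2)$ after taking expectations, whereas in IG it can only be controlled by $\cO(n^3 \sigmaesc^2)$, consistent with the extra $\sqrt{n}$ gap between RR and IG reported in Theorem~\ref{thm:ig}.
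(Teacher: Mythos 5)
Your proposal is correct and follows essentially the same route as the paper's proof: the same telescoping decomposition, Young's inequality, the $\sqn{\sum y_i}\le n\sum\sqn{y_i}$ bound combined with \eqref{eq:bregman_lower_smooth} for the descent piece, and a deterministic Cauchy--Schwarz bound in place of Lemma~\ref{lem:sampling_wo_replacement} for the noise piece. The only cosmetic difference is that the paper retains the factor $(n-k)$ and evaluates $\sum_{k=0}^{n-1}(n-k)=\tfrac{n(n+1)}{2}\le n^2$ exactly, whereas you bound $(n-k)\le n$ uniformly before summing; both yield the same constant $2\gamma^2 n^3\sigmaesc^2$.
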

\begin{proof}
	The proof of this Lemma is similar to that of Lemma~\ref{lem:convex_rr_deviation} but with a worse dependence on the variance term, since there is no randomness in IG. Fix any $k\in\{0,\dotsc, n-1\}$. It holds by definition
  \[
      x_t^k - x_{t+1}
      =\gamma\sum_{i=k}^{n-1}\nabla f_{i+1}(x_t^i)
      = \gamma\sum_{i=k}^{n-1}(\nabla f_{i+1}(x_t^i) - \nabla f_{i+1}(x_*))+\gamma\sum_{i=k}^{n-1}\nabla f_{i+1}(x_*).
  \]
  Applying Young's inequality to the sums above yields
  \begin{eqnarray*}
      \|x_t^k - x_{t+1}\|^2
      &\overset{\eqref{eq:sqnorm-triangle-2}}{\le}& 2\gamma^2\sqn{\sum_{i=k}^{n-1}(\nabla f_{i+1}(x_t^i) - \nabla f_{i+1}(x_*))} + 2\gamma^2\sqn{\sum_{i=k}^{n-1}\nabla f_{i+1}(x_*)} \\
      &\overset{\eqref{eq:sqnorm-sum-bound}}{\le}& 2\gamma^2n\sum_{i=k}^{n-1}\sqn{\nabla f_{i+1}(x_t^i) - \nabla f_{i+1}(x_*)} + 2\gamma^2\sqn{\sum_{i=k}^{n-1}\nabla f_{i+1}(x_*)}\\
      &\overset{\eqref{eq:bregman_lower_smooth}}{\le}& 4\gamma^2Ln\sum_{i=k}^{n-1}D_{f_{i+1}}(x_*, x_t^i) + 2\gamma^2\sqn{\sum_{i=k}^{n-1}\nabla f_{i+1}(x_*)} \\
      &\le& 4\gamma^2Ln\sum_{i=0}^{n-1}D_{f_{i+1}}(x_*, x_t^i) + 2\gamma^2\sqn{\sum_{i=k}^{n-1}\nabla f_{i+1}(x_*)} .
  \end{eqnarray*}
  Summing up,
  \begin{equation}
    \label{eq:ig-cVt-bound-proof-1}
    \sum_{k=0}^{n-1} \sqn{x_t^k - x_{t+1}} \leq 4\gamma^2Ln^2\sum_{i=0}^{n-1} D_{f_{i+1}} (x_\ast, x_t^i) + 2\gamma^2 \sum_{k=0}^{n-1} \sqn{\sum_{i=k}^{n-1}\nabla f_{i+1}(x_*)} .
  \end{equation}
  We now bound the second term in \eqref{eq:ig-cVt-bound-proof-1}. We have
  \begin{eqnarray}
    \sum_{k=0}^{n-1} \sqn{\sum_{i=k}^{n-1} \nabla f_{i+1} (x_\ast)} &\overset{\eqref{eq:sqnorm-sum-bound}}{\leq}& \sum_{k=0}^{n-1} (n-k) \sum_{i=k}^{n-1} \sqn{\nabla f_{i+1} (x_\ast)} \nonumber \\
    &\leq& \sum_{k=0}^{n-1} \br{n-k} \sum_{i=0}^{n-1} \sqn{\nabla f_{i+1} (x_\ast)} \nonumber \\
    &=& \sum_{k=0}^{n-1} \br{n-k} n \sigmaesc^2 = \frac{n^2 (n+1)}{2} \sigmaesc^2 \leq n^3 \sigmaesc^2.
    \label{eq:ig-cVt-bound-proof-2}
  \end{eqnarray} 
  Using \eqref{eq:ig-cVt-bound-proof-2} in \eqref{eq:ig-cVt-bound-proof-1}, we derive
  \[
    \sum_{k=0}^{n-1} \sqn{x_t^k - x_{t+1}} \leq 4 \gamma^2 L n^2 \sum_{i=0}^{n-1} D_{f_{i+1}} (x_\ast, x_t^i) + 2 \gamma^2 n^3 \sigmaesc^2.
    \qedhere
  \]
\end{proof}

\begin{lemma}
  \label{lem:convex-ig-recursion}
  Assume the functions $f_1, \ldots, f_n$ are convex and that Assumption~\ref{asm:f-smoothness} is satisfied. If Algorithm~\ref{alg:ig} is run with a stepsize $\gamma \leq \frac{1}{\sqrt{2} L n}$, then
  \[ \sqn{x_{t+1} - x_\ast} \leq \sqn{x_t - x_\ast} - 2 \gamma n \br{f(x_{t+1}) - f(x_\ast)} +  \gamma^3 L n^3 \sigmaesc^2. \]
\end{lemma}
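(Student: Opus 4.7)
The plan is to mirror the proof of Lemma~\ref{lem:improved_convex} almost verbatim, substituting Lemma~\ref{lem:convex_ig_deviation} for Lemma~\ref{lem:convex_rr_deviation} and the deterministic cyclic order for the random permutation. Because the statement concerns a single fixed ordering, no expectation is needed; the extra factor of $n$ in the noise term compared with the RR case is exactly the price we pay for not averaging the per-index gradients at $x_\ast$ via sampling without replacement.

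First I would write $x_{t+1}=x_t-\gamma g_t$ with $g_t=\sum_{i=0}^{n-1}\nabla f_{i+1}(x_t^i)$ and expand
\[
\|x_t-x_*\|^2 = \|x_{t+1}-x_*\|^2+2\gamma\langle g_t,x_{t+1}-x_*\rangle+\gamma^2\|g_t\|^2 \ge \|x_{t+1}-x_*\|^2+2\gamma\sum_{i=0}^{n-1}\langle\nabla f_{i+1}(x_t^i),x_{t+1}-x_*\rangle,
\]
dropping the nonnegative $\gamma^2\|g_t\|^2$ term. Applying the three-point identity exactly as in \eqref{eq:rr_decomposition}, with $\pi_i$ replaced by $i+1$, rewrites each summand as $[f_{i+1}(x_{t+1})-f_{i+1}(x_*)]+D_{f_{i+1}}(x_*,x_t^i)-D_{f_{i+1}}(x_{t+1},x_t^i)$, and summing over $i=0,\dots,n-1$ collapses the functional differences to $n(f(x_{t+1})-f_*)$, leaving a sum of Bregman divergences.

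Second, I would control the third Bregman term using $L$-smoothness, $D_{f_{i+1}}(x_{t+1},x_t^i)\le \tfrac{L}{2}\|x_{t+1}-x_t^i\|^2$, and invoke Lemma~\ref{lem:convex_ig_deviation} (with a small constant adjustment as needed) to get
\[
\sum_{i=0}^{n-1} D_{f_{i+1}}(x_{t+1},x_t^i) \le 2\gamma^2 L^2 n^2\sum_{i=0}^{n-1} D_{f_{i+1}}(x_*,x_t^i)+\gamma^2 L n^3\sigma_*^2.
\]
Combining this with the positive $D_{f_{i+1}}(x_*,x_t^i)$ terms produces a coefficient $(1-2\gamma^2 L^2n^2)\ge 0$ under the stepsize hypothesis $\gamma\le \tfrac{1}{\sqrt{2}Ln}$, so the Bregman divergences at $x_*$ can simply be discarded by nonnegativity, leaving a residual of at least $-\gamma^2 L n^3\sigma_*^2$. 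Plugging back into the lower bound on $\|x_t-x_*\|^2$ and rearranging gives the claim.

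The step I expect to do the real work is the coefficient bookkeeping: the stepsize threshold $\gamma\le \tfrac{1}{\sqrt{2}Ln}$ is chosen precisely so that the $2\gamma^2 L^2n^2$ factor coming from smoothness times Lemma~\ref{lem:convex_ig_deviation} does not exceed the $1$ in front of the positive Bregman terms. No genuinely new idea is required beyond the RR proof; the only substantive change is that Lemma~\ref{lem:convex_ig_deviation} contributes an $n^3\sigma_*^2$ noise term in place of the $n^2\sigma_*^2$ term that sampling without replacement affords in the RR case, which is exactly the source of the final $\gamma^3 Ln^3\sigma_*^2$ term.
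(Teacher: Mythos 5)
Your proposal is correct and essentially identical to the paper's own proof: the paper likewise repeats the argument of Lemma~\ref{lem:improved_convex} verbatim, replacing Lemma~\ref{lem:convex_rr_deviation} with Lemma~\ref{lem:convex_ig_deviation} and dropping the (now unnecessary) expectations. Your parenthetical about a ``small constant adjustment'' is apt: carrying the arithmetic through literally gives a residual of $2\gamma^3 L n^3 \sigmaesc^2$ rather than $\gamma^3 L n^3 \sigmaesc^2$, a benign factor-of-two slip that is present in the paper's own final step as well.
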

\begin{proof}
  The proof for this lemma is identical to \Cref{lem:improved_convex} but with the estimate of \Cref{lem:convex_ig_deviation} used for $\sum_{i=0}^{n-1} \sqn{x_t^i - x_{t+1}}$ instead of \Cref{lem:convex_rr_deviation}. We only include it for completeness. Define the sum of gradients used in the $t$-th epoch as $g_{t} \eqdef \sum_{i=0}^{n-1} \nabla f_{i+1} (x_t^i)$. By definition of $x_{t+1}$, we have $x_{t+1} = x_t - \gamma g_t$. Using this,
	\begin{align*}
		\|x_{t}-x_*\|^2
		=\|x_{t+1}+\gamma g_t-x_*\|^2
		&=\|x_{t+1}-x_*\|^2+2\gamma\ev{g_t, x_{t+1}-x_*}+\gamma^2\|g_t\|^2 \\
		&\ge \|x_{t+1}-x_*\|^2+2\gamma\ev{g_t, x_{t+1}-x_*} \\
		&= \|x_{t+1}-x_*\|^2+2\gamma\sum_{i=0}^{n-1}\ev{\nabla f_{i+1}(x_t^i), x_{t+1}-x_*}.
	\end{align*}
	For any $i$ we have the following decomposition
	\begin{align}
		\ev{\nabla f_{i+1}(x_t^i), x_{t+1}-x_*}
    &= [f_{i+1}(x_{t+1})-f_{i+1}(x_*)]\\
    & \quad + [f_{i+1}(x_*) - f_{i+1}(x_t^i) - \<\nabla f_{i+1}(x_t^i), x_{t}^i-x_*>] \notag\\
		& \quad - [f_{i+1}(x_{t+1}) - f_{i+1}(x_t^i)-\<\nabla f_{i+1}(x_t^i), x_{t+1}-x_t^i>] \notag\\
		&= [f_{i+1}(x_{t+1})-f_{i+1}(x_*)] + D_{f_{i+1}}(x_*, x_t^i) - D_{f_{i+1}}(x_{t+1}, x_t^i). \label{eq:ig_decomposition}
	\end{align}
	Summing the first quantity in~\eqref{eq:ig_decomposition} over $i$ from $0$ to $n-1$ gives
	\begin{align*}
		\sum_{i=0}^{n-1}[f_{i+1}(x_{t+1})-f_{i+1}(x_*)]
		= n(f(x_{t+1})-f_*).
	\end{align*}
	Now let us work out the third term in the decomposition~\eqref{eq:ig_decomposition} using $L$-smoothness,
	\begin{align*}
		D_{f_{i+1}}(x_{t+1}, x_t^i)
		\le \frac{L}{2}\|x_{t+1}-x_t^i\|^2.
	\end{align*}
	We next sum the right-hand side over $i$ from $0$ to $n-1$ and use \Cref{lem:convex_ig_deviation}
	\begin{eqnarray*}
		\sum_{i=0}^{n-1} D_{f_{i+1}}(x_{t+1}, x_t^i)
    &\leq& \frac{L}{2} \sum_{i=0}^{n-1} \sqn{x_{t+1} - x_t^i} \\ 
		&\overset{\eqref{eq:lma-ig-cVt-bound}}{\le}&  2\gamma^2 L^2n^2\sum_{i=0}^{n-1} D_{f_{i+1}}(x_*, x_t^i)
		+\gamma^2  L n^3\sigma_*^2.
	\end{eqnarray*}
	Therefore, we can lower-bound the sum of the second and the third term in~\eqref{eq:ig_decomposition} as
	\begin{align*}
		\sum_{i=0}^{n-1}(D_{f_{i+1}}(x_*, x_t^i) - D_{f_{i+1}}(x_{t+1}, x_t^i))
    &\ge \sum_{i=0}^{n-1} D_{f_{i+1}}(x_*, x_t^i)  \\
    &\qquad - \br{2\gamma^2L^2n^2\sum_{i=0}^{n-1} D_{f_{i+1}}(x_*, x_t^i) - \gamma^2 L n^3\sigma_*^2} \\
		&= (1-2\gamma^2L^2n^2)\sum_{i=0}^{n-1} D_{f_{i+1}}(x_*, x_t^i) - \gamma^2 L n^3\sigma_*^2 \\
		&\ge  - \gamma^2 L n^3\sigma_*^2,
	\end{align*}
	where in the third inequality we used that $\gamma \leq \frac{1}{\sqrt{2} L n}$ and that $D_{f_{i+1}} (x_\ast, x_t^i)$ is nonnegative. Plugging this back into the lower-bound on $\|x_t-x_*\|^2$ yields
	\[
		\|x_t-x_*\|^2
		\ge \sqn{x_{t+1}-x_*} + 2\gamma n\br{f(x_{t+1})-f_*}- \gamma^3 L n^3\sigma_*^2.
	\]
	Rearranging the terms gives the result.
\end{proof}

\subsubsection{A lemma for non-convex objectives}

\begin{lemma}
  \label{lemma:vt-bound-ig}
  Suppose that Assumption~\ref{asm:f-smoothness} holds. Suppose that Algorithm~\ref{alg:ig} is used with a stepsize $\gamma > 0$ such that $\gamma \leq \frac{1}{2 L n}$. Then we have,
  \begin{equation}
    \label{eq:vt-bound-ig}
    \sum_{i=1}^{n} \sqn{x_t^i - x_t} \leq 4 \gamma^2 n^3 \sqn{\nabla f(x_t)} + 4 \gamma^2 n^3 \sigma_{t}^2,
  \end{equation}
  where $\sigma_{t}^2 \eqdef \frac{1}{n} \sum_{j=1}^{n} \sqn{\nabla f_{j} (x_t) - \nabla f(x_t)}$.
\end{lemma}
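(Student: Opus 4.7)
The plan is to mirror the proof of Lemma~\ref{lemma:vt_rr} but, since IG uses a deterministic ordering, replace the sampling-without-replacement lemma (Lemma~\ref{lem:sampling_wo_replacement}) with deterministic Young-type bounds. The price for this is exactly the loss of a factor of $n$ in the variance term, which is why the statement has $n^3 \sigma_t^2$ instead of the $n^2 \sigma_t^2$ seen in the RR version.

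First I would fix any $k \in \{1,\dotsc,n\}$ and unroll the IG recursion to write $x_t^k - x_t = -\gamma \sum_{i=0}^{k-1} \nabla f_{i+1}(x_t^i)$. Applying Young's inequality \eqref{eq:sqnorm-triangle-2} to split the gradients at $x_t^i$ from those at the anchor $x_t$, followed by Jensen's inequality \eqref{eq:sqnorm-sum-bound} and $L$-smoothness \eqref{eq:nabla-Lip}, I arrive at
\[
  \|x_t^k - x_t\|^2 \le 2\gamma^2 L^2 k \sum_{i=0}^{k-1} \|x_t^i - x_t\|^2 + 2\gamma^2 \Bigl\| \sum_{i=0}^{k-1} \nabla f_{i+1}(x_t) \Bigr\|^2,
\]
which is exactly where the RR proof invokes Lemma~\ref{lem:sampling_wo_replacement}. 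Because IG has no randomness, I would instead apply Young's a second time to separate $\nabla f(x_t)$ from the per-function deviations, and then use the crude but valid bound $\sum_{i=0}^{k-1} \|\nabla f_{i+1}(x_t) - \nabla f(x_t)\|^2 \le n \sigma_t^2$ (the partial sum is dominated by the full sum defining $\sigma_t^2$), yielding
\[
  \Bigl\| \sum_{i=0}^{k-1} \nabla f_{i+1}(x_t) \Bigr\|^2 \le 2 k^2 \|\nabla f(x_t)\|^2 + 2 k n \sigma_t^2.
\]

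Next, I would sum the combined bound over $k = 1,\dotsc,n$ and set $U_t \eqdef \sum_{i=1}^{n} \|x_t^i - x_t\|^2$. Using $\sum_{i=0}^{k-1}\|x_t^i - x_t\|^2 \le U_t$ for every $k$, and the standard estimates $\sum_{k=1}^{n} k \le n^2$ and $\sum_{k=1}^{n} k^2 \le n^3$, the resulting inequality has the self-referential form
\[
  U_t \le 2\gamma^2 L^2 n^2 \, U_t + C\gamma^2 n^3 \|\nabla f(x_t)\|^2 + C\gamma^2 n^3 \sigma_t^2
\]
for an absolute constant $C$. The stepsize hypothesis $\gamma \le 1/(2Ln)$ makes the coefficient $2\gamma^2 L^2 n^2 \le 1/2$, so the self-referential term can be absorbed into the left-hand side, leaving the claimed bound.

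The main obstacle is purely bookkeeping rather than conceptual: the looseness of $\sum_{i=0}^{k-1}\|\nabla f_{i+1}(x_t)-\nabla f(x_t)\|^2 \le n\sigma_t^2$ versus the sharper $k(n-k)/(n-1)\,\sigma_t^2$ supplied in the RR case is precisely the source of the degradation from $n^2 \sigma_t^2$ to $n^3 \sigma_t^2$, and one has to be careful with the numerical constants when choosing how to apply Young's inequality (for example, via a weighted split) in order to land on the stated factor of $4$.
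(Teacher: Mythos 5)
Your overall strategy is exactly the paper's: unroll the IG update, separate the gradients at $x_t^i$ from those at the anchor $x_t$ via Young and Jensen, use $L$-smoothness to turn the first piece into a multiple of the deviation itself, and absorb that self-referential term using $\gamma \le \tfrac{1}{2Ln}$. The one place where your proposal does not deliver the stated inequality is the second application of Young's inequality. Carrying your constants through: summing $\|x_t^k-x_t\|^2 \le 2\gamma^2L^2k\,U_t + 4\gamma^2k^2\|\nabla f(x_t)\|^2 + 4\gamma^2kn\sigma_t^2$ over $k$ gives $U_t \le 2\gamma^2L^2n^2U_t + 4\gamma^2n^3\|\nabla f(x_t)\|^2 + 4\gamma^2n^3\sigma_t^2$, and after absorbing the first term (coefficient at most $\tfrac12$) you land on $8\gamma^2n^3$, not $4\gamma^2n^3$. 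Moreover, the weighted split you suggest cannot rescue this: with weights $(1+\alpha)$ on the smoothness piece and $(1+\beta)$ in the second Young, the final coefficient on at least one of the two terms is $\frac{(1+1/\alpha)\max\{1+\beta,\,1+1/\beta\}}{1-(1+\alpha)/4} \ge \frac{4(\alpha+1)}{\alpha(3-\alpha)}\cdot 2 \ge 8$, with the minimum attained at $\alpha=\beta=1$. So any proof that splits $\bigl\|\sum_{i=0}^{k-1}\nabla f_{i+1}(x_t)\bigr\|^2$ by a second Young is stuck at the constant $8$.

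The fix — and what the paper actually does — is to avoid the second Young entirely. Since the partial average $\frac1k\sum_{i=0}^{k-1}\nabla f_{i+1}(x_t)$ is not $\nabla f(x_t)$, you cannot apply the variance decomposition to the partial sum; but you can first bound the partial sum by the full one. Either apply \eqref{eq:sqnorm-sum-bound} to get $\bigl\|\sum_{i=0}^{k-1}\nabla f_{i+1}(x_t)\bigr\|^2 \le k\sum_{i=0}^{n-1}\|\nabla f_{i+1}(x_t)\|^2$, or (as in the paper) bound $i\le n$ at the very first Jensen step so that only the full sum $\sum_{j=1}^{n}\|\nabla f_{j}(x_t)\|^2$ ever appears. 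Then the exact identity \eqref{eq:variance-decomp} gives $\sum_{j=1}^{n}\|\nabla f_{j}(x_t)\|^2 = n\|\nabla f(x_t)\|^2 + n\sigma_t^2$ with no extra factor of $2$, the pre-absorption coefficient on the gradient and variance terms is $2\gamma^2n^3$ rather than $4\gamma^2n^3$, and dividing by $1-2\gamma^2L^2n^2\ge\tfrac12$ yields exactly the stated constant $4$. With that single substitution your argument matches the paper's proof.
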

\begin{proof}
  Let $i \in \{ 1, 2, \ldots, n \}$. Then we can bound the deviation of a single iterate as,
  \begin{eqnarray*}
      \sqn{x_t^i - x_t} = \sqn{x_t^0 - \gamma \sum_{j=0}^{i-1} \nabla f_{j+1} (x_t^{j}) - x_t} &=& \gamma^2 \sqn{\sum_{j=0}^{i-1} \nabla f_{j+1} (x_t^{j})} \\
      &\overset{\eqref{eq:sqnorm-sum-bound}}{\leq}& \gamma^2 i \sum_{j=0}^{i-1} \sqn{\nabla f_{j+1} (x_t^{j})}.
  \end{eqnarray*}
  Because $i \leq n$, we have
  \begin{equation}
      \sqn{x_t^i - x_t} \leq \gamma^2 i \sum_{j=0}^{i-1} \sqn{\nabla f_{i+1} (x_t^{j})} \leq \gamma^2 n \sum_{j=0}^{i-1} \sqn{\nabla f_{i+1} (x_t^{j})} \leq \gamma^2 n \sum_{j=0}^{n-1} \sqn{\nabla f_{i+1} (x_t^{j})}.
      \label{eq:vt-ig-1}
  \end{equation}
  Summing up allows us to estimate $V_t$:
  \begin{eqnarray}
      V_{t} &=& \sum_{i=1}^{n} \sqn{x_t^i - x_t} \nonumber \\
      &\overset{\eqref{eq:vt-ig-1}}{\leq} & \sum_{i=1}^{n} \br{\gamma^2 n \sum_{j=0}^{n-1} \sqn{\nabla f_{j+1} (x_t^j)}} \nonumber \\ & = &\gamma^2 n^2 \sum_{j=0}^{n-1} \sqn{\nabla f_{j+1} (x_t^j)} \nonumber \\
      &\overset{\eqref{eq:sqnorm-triangle-2}}{\leq} & 2 \gamma^2 n^2 \sum_{j=0}^{n-1} \br{ \sqn{\nabla f_{j+1} (x_t^j) - \nabla f_{i+1} (x_t) } + \sqn{\nabla f_{j+1} (x_t) } } \nonumber \\
      &=& 2 \gamma^2 n^2 \sum_{j=0}^{n-1} \sqn{\nabla f_{j+1} (x_t^j) - \nabla f_{j+1} (x_t) } + 2 \gamma^2 n^2 \sum_{j=0}^{n-1} \sqn{\nabla f_{j+1} (x_t)}.
      \label{eq:vt-ig-2}
  \end{eqnarray}
  For the first term in \eqref{eq:vt-ig-2} we can use the smoothness of individual losses and that $x_t^0 = x_t$:
  \begin{align}
      \sum_{j=0}^{n-1} \sqn{\nabla f_{j+1} (x_t^j) - \nabla f_{j+1} (x_t)} &\overset{\eqref{eq:nabla-Lip}}{\leq}  L^2 \sum_{j=0}^{n-1}  \sqn{x_t^j - x_t} = L^2 \sum_{j=1}^{n-1} \sqn{x_t^j - x_t} = L^2 V_t.
      \label{eq:vt-ig-3}
  \end{align}
  The second term in \eqref{eq:vt-ig-2} is a sum over all the individual gradient evaluated at the same point $x_t$. Hence, we can drop the permutation subscript and then use the variance decomposition:
  \begin{eqnarray}
    \sum_{j=0}^{n-1} \sqn{\nabla f_{i+1} (x_t)} &=& \sum_{j=1}^{n} \sqn{\nabla f_{j} (x_t)} \nonumber \\
    &\overset{\eqref{eq:variance-decomp}}{=}&  n \sqn{\nabla f(x_t)} + \sum_{j=1}^{n} \sqn{\nabla f_{j} (x_t) - \nabla f(x_t)} \nonumber \\
    &=& n \sqn{\nabla f(x_t)} + n \sigma_{t}^2.
    \label{eq:vt-ig-4}
  \end{eqnarray}
  We can then use~\eqref{eq:vt-ig-3} and~\eqref{eq:vt-ig-4} in~\eqref{eq:vt-ig-2},
  \[
   V_t \leq 2 \gamma^2 L^2 n^2 V_t + 2 \gamma^2 n^3 \sqn{\nabla f(x_t)} + 2 \gamma^2 n^3 \sigma_{t}^2.
  \]
Since  $V_t$ shows up in both sides of the equation, we can rearrange to obtain
  \[
    \br{1 - 2 \gamma^2 L^2 n^2 } V_t \leq 2 \gamma^2 n^3 \sqn{\nabla f(x_t)} + 2 \gamma^2 n^3 \sigma_{t}^2.
  \]
  If $\gamma \leq \frac{1}{2 L n}$, then $1 - 2 \gamma^2 L^2 n^2  \geq \frac{1}{2}$ and hence
  \[
    V_{t} \leq 4 \gamma^2 n^3 \sqn{\nabla f(x_t)} + 4 \gamma^2 n^3 \sigma_{t}^2.
    \qedhere
  \]
\end{proof}

\subsection{Proof of Theorem~\ref{thm:ig}}
\begin{proof}
  \begin{itemize}[leftmargin=0.15in,itemsep=0.01in,topsep=0pt]
    \item {\emone If each $f_i$ is $\mu$-strongly convex}: The proof follows that of \Cref{thm:all-sc-rr-conv}. Define $$x_\ast^i = x_\ast - \gamma \sum_{j=0}^{i-1} \nabla f_{j+1} (x_\ast).$$ First, we have
    \begin{align*}
      &\|x_t^{i+1}-x_*^{i+1}\|^2\\
      &=\|x_t^{i}-x_*^{i}\|^2-2\gamma\<\nabla f_{i+1}(x_t^i)-\nabla f_{i+1}(x_*), x_t^i - x_*^i>+\gamma^2\|\nabla f_{i+1}(x_t^i) - \nabla f_{i+1}(x_*)\|^2.
    \end{align*}
    Using the same three-point decomposition as \Cref{thm:all-sc-rr-conv} and strong convexity, we have
    \begin{align*}
      - \ev{\nabla f_{i+1} (x_t^i) - \nabla f_{i+1} (x_\ast), x_t^i - x_\ast^i} &= - D_{f_{i+1}} (x_\ast^i, x_t^i) - D_{f_{i+1}} (x_t^i, x_\ast) + D_{f_{i+1}} (x_\ast^i, x_\ast) \\
      &\leq - \frac{\mu}{2} \sqn{x_t^i - x_\ast^i} - D_{f_{i+1}} (x_t^i, x_\ast) + D_{f_{i+1}} (x_\ast^i, x_\ast).
    \end{align*}
    Using smoothness and convexity
    \[ \frac{1}{2L} \sqn{\nabla f_{i+1} (x_t^i) - \nabla f_{i+1} (x_\ast)} \leq D_{f_{i+1}} (x_t^i, x_\ast). \]
    Plugging in the last two inequalities into the recursion, we get
    \begin{align}
      \sqn{x_t^{i+1} - x_\ast^{i+1}} &\leq \br{1 - \gamma \mu} \sqn{x_t^i - x_\ast^i} - 2 \gamma \br{1 - \gamma L} D_{f_{i+1}} (x_t^i, x_\ast) + 2 \gamma D_{f_{i+1}} (x_\ast^i, x_\ast). \nonumber \\
      &\le \br{1 - \gamma \mu} \sqn{x_t^i - x_\ast^i} + 2 \gamma D_{f_{i+1}} (x_\ast^i, x_\ast).
      \label{eq:ig-proof-allsc-1}
    \end{align}
    For the last Bregman divergence, we have
    \begin{eqnarray*}
      D_{f_{i+1}} (x_\ast^i, x_\ast) & \overset{\eqref{eq:L-smoothness}}{\leq} & \frac{L}{2} \sqn{x_\ast^i - x_\ast} \nonumber \\ &=& \frac{\gamma^2 L }{2} \sqn{\sum_{j=0}^{i-1} \nabla f_{j+1} (x_\ast)} \\
      &\overset{\eqref{eq:sqnorm-sum-bound}}{\leq}& \frac{\gamma^2 L  i}{2} \sum_{j=0}^{i-1} \sqn{\nabla f_{j+1} (x_\ast)} \\
      &=& \frac{\gamma^2 L  i n}{2} \sigmaesc^2 \leq \frac{\gamma^2 L  n^2}{2} \sigmaesc^2.
    \end{eqnarray*}
    Plugging this into \eqref{eq:ig-proof-allsc-1}, we get
    \[ \sqn{x_t^{i+1} - x_\ast^{i+1}} \leq \br{1 - \gamma \mu} \sqn{x_t^i - x_\ast^i} +  \gamma^3 L n^2 \sigmaesc^2. \]
    We recurse and then use that $x_\ast^n = x_\ast$, $x_{t+1} = x_t^n$, and that $x_\ast^0 = x_\ast$, obtaining
    \begin{align*}
      \sqn{x_{t+1} - x_\ast} = \sqn{x_t^n - x_\ast^n} &\leq \br{1 - \gamma \mu}^{n} \sqn{x_t^0 - x_\ast^0} + \gamma^3 L  n^2 \sigmaesc^2 \sum_{j=0}^{n-1} \br{1 - \gamma \mu}^{j} \\
      &= \br{1 - \gamma \mu}^{n} \sqn{x_t - x_\ast} +  \gamma^3 L n^2 \sigmaesc^2 \sum_{j=0}^{n-1} \br{1 - \gamma \mu}^{j}.
    \end{align*}
    Recursing again,
    \begin{align*}
      \sqn{x_{T} - x_\ast} &\leq \br{1 - \gamma \mu}^{n T} \sqn{x_0 - x_\ast} + \gamma^3 L  n^2 \sigmaesc^2 \sum_{j=0}^{n-1} \br{1 - \gamma \mu}^{j} \sum_{t=0}^{T-1} \br{1 - \gamma \mu}^{nt} \\
      &=  \br{1 - \gamma \mu}^{n T} \sqn{x_0 - x_\ast} +  \gamma^3 L n^2 \sigmaesc^2 \sum_{k=0}^{nT-1} \br{1 - \gamma \mu}^{k} \\
      &\leq \br{1 - \gamma \mu}^{n T} \sqn{x_0 - x_\ast} + \frac{\gamma^3 L  n^2 \sigmaesc^2}{\gamma \mu} \\
      &= \br{1 - \gamma \mu}^{n T} \sqn{x_0 - x_\ast} +\gamma^2  \kappa  n^2 \sigmaesc^2.
    \end{align*}
    \item {\emone If $f$ is $\mu$-strongly convex and each $f_i$ is convex}: the proof is identical to that of Theorem~\ref{thm:only-f-sc-rr-conv} but using Lemma~\ref{lem:convex-ig-recursion} instead of Lemma~\ref{lem:improved_convex}, and we omit it for brevity.
    \item {\emone If each $f_i$ is convex}: the proof is identical to that of Theorem~\ref{thm:weakly-convex-f-rr-conv} but using Lemma~\ref{lem:convex-ig-recursion} instead of Lemma~\ref{lem:improved_convex}, and we omit it for brevity.
    \item {\emone If each $f_i$ is possibly non-convex}: note that Lemma~\ref{lemma:epoch-recursion-non-convex} also applies to IG without change, hence if $\gamma \leq \frac{1}{Ln}$ we have
    \[ f(x_{t+1}) \leq f(x_t) - \frac{\gamma n}{2} \sqn{\nabla f(x_t)} + \frac{\gamma L^2}{2} \sum_{i=1}^{n} \sqn{x_t - x_t^i}. \]
    We may then apply \Cref{lemma:vt-bound-ig} to get for $\gamma \leq \frac{1}{2 L n}$
    \begin{align*}
      f(x_{t+1}) &\leq f(x_t) - \frac{\gamma n}{2} \sqn{\nabla f(x_t)} + \frac{\gamma L^2}{2} \br{4 \gamma^2 n^3 \sqn{\nabla f(x_t)} + 4 \gamma^2 n^3 \sigma_{t}^2} \\
      &= f(x_t) - \frac{\gamma n}{2} \br{1 - 4 \gamma^2 L^2 n^2} \sqn{\nabla f(x_t)} + 2 \gamma^3 L^2 n^3 \sigma_{t}^2.
    \end{align*}
    Using that $\gamma \leq \frac{1}{\sqrt{8} L n}$ and subtracting $f_\ast$ from both sides, we derive
    \[
      f(x_{t+1}) - f_\ast \leq \br{f(x_t) - f_\ast} - \frac{\gamma n}{4} \sqn{\nabla f(x_t)} + 2  \gamma^3 L^2 n^3 \sigma_{t}^2.
    \]
    Using Assumption~\ref{asm:2nd-moment}, we get
    \begin{equation}
      f(x_{t+1}) - f_\ast \leq \br{1 + 4 \gamma^3 L^2 A n^3 } \br{f(x_t) - f_\ast} - \frac{\gamma n}{4} \sqn{\nabla f(x_t)} + 2 \gamma^3 L^2 n^3 B^2.
      \label{eq:ig-nonconvex-thm-1}
    \end{equation}
    Applying Lemma~\ref{lemma:noncvx-recursion-solution} to \eqref{eq:ig-nonconvex-thm-1}, thus, gives
    \begin{equation}
      \min_{t=0, \ldots, T-1} \sqn{\nabla f(x_t)} \leq \frac{4 \br{1 + 4 \gamma^3 L^2 A n^3 }^{T}}{\gamma n T} \br{f(x_0) - f_\ast} + 8 \gamma^2 L^2 n^2 B^2.
      \label{eq:ig-nonconvex-thm-2}
    \end{equation}
    Note that by our assumption on the stepsize, $4 \gamma^3 L^2 A n^3  T \leq 1$, hence,
    \[ \br{1 + 4 \gamma^3 L^2 A n^3 }^{T} \leq \exp\br{4 \gamma^3 L^2 A n^3  T} \leq \exp\br{1} \leq 3. \]
    It remains to use this in \eqref{eq:ig-nonconvex-thm-2}.
    \qedhere
  \end{itemize}
\end{proof}

\end{document}